\theoremstyle{plain}
\newtheorem{thm}{Theorem}[section]
\newtheorem{prop}[thm]{Proposition}
\newtheorem{lem}[thm]{Lemma}
\theoremstyle{definition}
\newtheorem{dfn}[thm]{Definition}
\newtheorem{ex}[thm]{Example}
\newtheorem{rem}[thm]{Remark}
\numberwithin{equation}{section}
\def\al{\alpha}
\def\be{\beta}
\def\ga{\gamma}
\def\ep{\epsilon}
\def\si{\sigma}
\def\Om{\Omega}
\def\Ga{\Gamma}
\def\Th{\Theta}
\def\bu{\bullet}
\def\R{{\mathbin{\mathbb R}}}
\def\Z{{\mathbin{\mathbb Z}}}
\def\Q{{\mathbin{\mathbb Q}}}
\def\C{{\mathbin{\mathbb C}}}
\def\uE{{\mathbb{E}^\bu}}
\def\ch{{\rm ch}}
\def\Map{{\rm Map}}
\def\Gr{{\rm Gr}}
\def\A{{\mathbin{\cal A}}}
\def\M{{\mathbin{\cal M}}}
\def\E{{\mathbin{\cal E}}}
\def\X{{\mathbin{\cal X}}}
\def\ra{\rightarrow}
\def\cExt{\mathcal{E}{\rm xt}^\bu}
\DeclareFontFamily{U}{rsfs}{} 
\DeclareFontShape{U}{rsfs}{n}{it}{<->
rsfs10}{} \DeclareSymbolFont{mscr}{U}{rsfs}{n}{it}
\DeclareSymbolFontAlphabet{\scr}{mscr}
\def\mathscr{\scr}
\title{The homology of moduli stacks of complexes}
\author{Jacob Gross}
\date{}
\begin{document}

\maketitle

\begin{abstract}
We compute the rational homology of the moduli stack $\M$ of objects in the derived category of certain smooth complex projective varieties $X$ including toric varieties, flag varieties, curves, surfaces, and some 3- and 4-folds. We identify Joyce's vertex algebra construction on 
$H_\ast(\M,\Q)$ with a generalized super-lattice vertex algebra associated to $K^0_{\rm top}(X^{\rm an}) \oplus K^1_{\rm top}(X^{\rm an})$. 
\end{abstract}

\tableofcontents

\section{Introduction}

In \cite{Joy2} Joyce constructs a graded vertex algebra structure on the homology of moduli stacks $\M$ of objects in certain dg-categories $\A$. This is done by writing down explicit fields that depend on the following extra data: 
    \begin{itemize}
        \item a quotient $K(\A)$ of the Grothendieck group $K^0(\A)$,
        \item signs $\epsilon_{\alpha,\beta} \in \{\pm 1\}$ for all $\alpha,\beta \in K(\mathcal{A})$, and
        \item a perfect complex $\Theta^\bullet \in \text{Perf}(\mathcal{M} \times \mathcal{M})$.
    \end{itemize}
These data are to satisfy certain conditions \cite[4.1-2]{Joy2}, in which case we say that $\M$ is {\it a singular ring}. This terminology comes from Borcherds' definition of vertex algebras as commutative rings with respect to the singular tensor product in a certain category \cite{Bor2}. 

One goal of this theory is to provide a framework for expressing wall-crossing formulas for Donaldson--Thomas type invariants of Calabi--Yau 4-folds \cite{BoJo} \cite{CaLe}, Donaldson--Thomas invariants of Fano 3-folds \cite{Th}, and Donaldson invariants of algebraic surfaces \cite{Mo}. Note that these invariants are non-motivic.

It is already known that the wall-crossing formula for generalized Donaldson--Thomas invariants of Calabi--Yau $3$-folds can be expressed solely in terms of the Lie bracket of a Ringel--Hall algebra of stack functions \cite[Thm.~3.14]{JoSo}. However, the formalism of Ringel--Hall algebras of stack functions is only useful for motivic invariants. Nonetheless, there are a series of conjectures to the effect that the formula \cite[p.~31]{Joy1} is, in a sense, a universal wall-crossing formula (see Gross--Joyce--Tanaka \cite{GJT}). To write these formulas, one needs a suitable Lie bracket. Vertex algebras are one source of Lie algebras. 

Given a vertex algebra, for each $n \in \Z$, there is a binary operation $(v,w) \mapsto v_n(w)$. One could regard these operations as something like Lie brackets. From that perspective, a vertex algebra is an infinitary Lie algebra possessing infinitely many Lie brackets that satisfy infinitely many identities which resemble anti-symmetry and the Jacobi identity. In fact, after modding out a certain sub-module $DV$, $(v,w) \mapsto v_0(w)$ is a genuine Lie bracket on $V/DV$. 

The present document is dedicated to computing examples. The example when $\M$ is the moduli stack of objects in the derived category of representations of a finite quiver without vertex loops $Q=(Q_0,Q_1,t,h)$ was worked out by Joyce \cite{Joy2}. In this case, $H_\ast(\M,\Q)$ is the graded lattice vertex algebra associated to the lattice $\Z^{Q_0}$ whose integral structure is given by the Euler form when $\A$ is $2n$-Calabi--Yau and otherwise given by the symmetrized Euler form. We consider the moduli stack $\M$ of objects in the $\C$-dg-category ${\rm Perf}(X)$ of perfect complexes of coherent sheaves on a smooth complex projective variety $X$. 

We first combine several known results to conclude that the Betti realization of $\M$ has the homotopy type of the semi-topological K-theory space of $X$. If the identity component of $\Om^\infty K^{\rm sst}(X)$ has finite Betti numbers one can apply the Milnor--Moore theorem to compute the rational homology of $\M$--obtaining 
    \begin{equation}
    	H_\ast(\M,\Q) \cong \Q[K^0_{\rm sst}(X)] \otimes {\rm SSym}_\Q[\bigoplus_{i > 0} K^i_{\rm sst}(X)],
    \end{equation}
where $\Q[-]$ denotes the group $\Q$-algebra and ${\rm SSym}_\Q[-]$ denotes the free super-symmetric (i.e. commutative-graded) $\Q$-algebra. 

Unfortunately, computing semi-topological K-groups is difficult. There is a certain class of smooth complex projective varieties, however, for which $K^i_{\rm sst}(X)$ is isomorphic to $K^i_{\rm top}(X^{\rm an})$ for all $i > 0$. We call this class D (Def. \ref{hom5dfn5}). Projective varieties in class D include curves, surfaces, toric varieties, flag varieties, and rational 3- and 4-folds. Varieties with non-trivial Griffiths groups, such as general Calabi--Yau 3-folds, cannot be in class D. For varieties that are in class D, there is a homotopy equivalence between the Betti realization of the moduli space of objects in their derived categories and the complex topological K-theory space of their underlying analytic spaces. Using Brown-Szczarba and Haefliger's models of the rational homotopy types of mapping spaces and evaluation maps, we find that the rational cohomology of moduli stacks of perfect complexes of coherent sheaves on varieties in class D is freely generated by K\"unneth components of Chern classes of the universal complex (Thm. \ref{hom5thm13}). In these cases, we can compute an explicit graded vertex algebra. 

\begin{thm}[see Thm. \ref{hom6thm8}]
\label{hom1thm1}
Let $X$ be a smooth complex projective variety in class D and let $\M$ denote the moduli stack of objects in Perf$(X)$. Let $\uE$ denote the universal complex over $X \times \M$, let $\cExt = \R \pi_\ast(\pi^\ast(\uE)^\vee \otimes^{\mathbb{L}} \pi^\ast(\uE))$, and let $\chi : K_{\rm top}^0(X^{\rm an}) \oplus K_{\rm top}^1(X^{\rm an}) \times K_{\rm top}^0(X^{\rm an}) \oplus K_{\rm top}^1(X^{\rm an}) \ra \Z$ denote the Euler form
    $$\chi(v,w) = \int_{X^{\rm an}} \ch(v)^\vee \cdot \ch(w) \cdot {\rm Td}(X^{\rm an}).$$
 Then
\begin{itemize}
    \item[1.] The shifted homology $\hat{H}_\ast(\M,\Q)$ is made into a graded vertex $R$-algebra by taking $K({\rm Perf}(X)) = K^0_{\rm sst}(X)$, taking $\Th^\bu = (\cExt)^\vee \oplus \si^\ast (\cExt)[2n]$ for some $n \in \Z$, taking symmetric form to be the restriction of the symmetrisation $\chi_{\rm sym}(v,w) = \chi(v,w) + \chi(w,v)$ of $\chi$ to $K^0_{\rm sst}(X)$, and $\ep_{\al,\be} = (-1)^{\chi(\al,\be)}$ in (\ref{hom4eqn1}).  Then $\hat{H}_\ast(\M,\Q)$ is isomorphic, as a graded vertex algebra, to
        \begin{equation}
        \label{hom1eqn2}
            \Q[K^0_{\rm sst}(X)] \otimes {\rm Sym}(K^0_{\rm top}(X^{\rm an}) \otimes t^{-1}\Q[t^{-1}]) \otimes \bigwedge (K^1_{\rm top}(X^{\rm an}) \otimes t^{-\frac{1}{2}}\Q[t^{-1}]),
        \end{equation}
    where (\ref{hom1eqn2}) is given the structure of the generalized super-lattice vertex $R$-algebra associated to  $(K_{\rm top}^0(X^{\rm an}) \oplus K_{\rm top}^1(X^{\rm an}), \chi_{\rm sym})$ and the inclusion $K^0_{\rm sst}(X) \hookrightarrow K^0_{\rm top}(X^{\rm an})$.
    \item[2.] Suppose that $X$ is 2n-Calabi--Yau and that for all $\al,\be \in K^0_{\rm sst}(X)$ we are given signs $\ep_{\al,\be} \in \{\pm 1\}$ such that $\{\ep_{\al,\be}\}$ is a solution of (\ref{hom3eqn3})-(\ref{hom3eqn5}). Then shifted\footnote{Note that the shift in 2. is different than the shift in 1.} homology $\hat{H}_\ast(\M,\Q)$ is made into a graded vertex $R$-algebra by taking $K({\rm Perf}(X)) = K^0_{\rm sst}(X)$, taking $\Th^\bu = (\cExt)^\vee$, taking symmetric form to be $\chi$, and taking $\ep_{\al,\be}$ as signs in (4.3). Then $\hat{H}_\ast(\M,\Q)$ is isomorphic, as a graded vertex $R$-algebra, to
        \begin{equation}
        \label{hom1eqn3}
            \Q[K^0_{\rm sst}(X)] \otimes {\rm Sym}(K^0_{\rm top}(X^{\rm an}) \otimes t^{-1}\Q[t^{-1}]) \otimes \bigwedge (K^1_{\rm top}(X^{\rm an}) \otimes t^{-\frac{1}{2}}\Q[t^{-1}]),
        \end{equation}
    where (\ref{hom1eqn3}) is given the structure of the generalized super-lattice vertex $R$-algebra associated to the $(K_{\rm top}^0(X^{\rm an}) \oplus K_{\rm top}^1(X^{\rm an}), \chi)$ and the inclusion $K^0_{\rm sst}(X) \hookrightarrow K^0_{\rm top}(X^{\rm an})$. Up to isomorphism, this graded vertex $R$-algebra is independent of the representative of the cohomology class $[\ep] \in H^2(K^0_{\rm sst}(X),\Z/2\Z)$ that $\{\ep_{\al,\be}\}_{\al,\be \in K^0_{\rm sst}(X)}$ defines.
\end{itemize}
\end{thm}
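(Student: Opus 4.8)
The plan is to pin down both the underlying graded super-vector space and the generating fields on each side, and to check they agree. The vector-space statement is essentially Theorem \ref{hom5thm13}: it presents $H^\ast(\M,\Q)$ as the free graded-commutative algebra on the K\"unneth components of the Chern classes of $\uE$, so dually $H_\ast(\M,\Q)=\bigoplus_{\al\in K^0_{\rm sst}(X)}H_\ast(\M_{[\al]},\Q)$ and $H_\ast(\M_{[\al]},\Q)$ is free graded-commutative on the slant products $\ch_k(\uE)\backslash\gamma$ with $\gamma\in H_\ast(X^{\rm an},\Q)$ and $k\ge 1$. Using the rational Chern character isomorphisms $K^0_{\rm top}(X^{\rm an})\otimes\Q\cong\bigoplus_j H^{2j}(X^{\rm an},\Q)$ and $K^1_{\rm top}(X^{\rm an})\otimes\Q\cong\bigoplus_j H^{2j+1}(X^{\rm an},\Q)$ together with Poincar\'e duality, these generators are exactly indexed by $K^0_{\rm top}(X^{\rm an})\otimes t^{-k}\Q[t^{-1}]$ in the even part and $K^1_{\rm top}(X^{\rm an})\otimes t^{-k+\frac12}\Q[t^{-1}]$ in the odd part, the formal variable $t$ tracking homological degree, while the set $K^0_{\rm sst}(X)$ of components supplies the factor $\Q[K^0_{\rm sst}(X)]$. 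This yields the isomorphism of graded super-vector spaces with (\ref{hom1eqn2}) in case 1 and with (\ref{hom1eqn3}) in case 2; the two expressions differ only by the global homological shift built into $\hat H_\ast$, which accounts for the footnote.

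The heart of the argument is to compute Joyce's vertex algebra structure explicitly and compare generating fields. On the lattice side, the generalized super-lattice vertex $R$-algebra attached to $(K^0_{\rm top}(X^{\rm an})\oplus K^1_{\rm top}(X^{\rm an}),\chi_{\rm sym})$ (resp.\ $\chi$) and the inclusion $K^0_{\rm sst}(X)\hookrightarrow K^0_{\rm top}(X^{\rm an})$ is generated by Heisenberg/Clifford fields $a^{(v)}(z)$, one for each $v$ in the oscillator lattice, and by vertex operators $e^\al(z)$, $\al\in K^0_{\rm sst}(X)$, with OPEs governed by the form and by a sign $2$-cocycle with a prescribed commutator. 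On Joyce's side, by \cite[\S4]{Joy2} the state--field map $Y(-,z)$ is built from the multiplication and $\G_m$-action morphisms of $\M$ together with cap product against $\exp(\ch(\Th^\bu))$ on $\M\times\M$. I would evaluate this class by Grothendieck--Riemann--Roch applied to $\cExt=\R\pi_\ast(\pi^\ast\uE^\vee\otimes^{\mathbb L}\pi^\ast\uE)$: restricted to $\M_{[\al]}\times\M_{[\be]}$, the degree-zero part of $\ch(\Th^\bu)$ is $\chi(\al,\be)$, which becomes $\chi_{\rm sym}(\al,\be)=\chi(\al,\be)+\chi(\be,\al)$ once the summand $\si^\ast\cExt[2n]$ is added in case 1; this fixes the pole order of $Y$ and hence the lattice pairing, while the higher-degree part of $\exp(\ch(\Th^\bu))$ is assembled from slant products of $\ch(\uE)$ and, checked term by term, contributes exactly the Heisenberg/Clifford contractions with the right coefficients. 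Consequently Joyce's generating field on the $\ch_k(\uE)$-generators is $a^{(v)}(z)$, on $\Q[K^0_{\rm sst}(X)]$ it is $e^\al(z)$, and since the two translation operators $D$ are each determined by $D\mathbf 1=0$ and $[D,Y(u,z)]=\partial_z Y(u,z)$, the two vertex algebra structures coincide on generators, hence coincide. In case 1 the sign $\ep_{\al,\be}=(-1)^{\chi(\al,\be)}$ from (\ref{hom4eqn1}) is the standard cocycle with commutator $(-1)^{\chi_{\rm sym}(\al,\be)}$; in case 2, where $X$ is $2n$-Calabi--Yau and Serre duality makes $\chi$ symmetric, the hypotheses (\ref{hom3eqn3})--(\ref{hom3eqn5}) say precisely that $\{\ep_{\al,\be}\}$ is a sign $2$-cocycle of the kind the generalized super-lattice vertex algebra requires, and any two such differ by the coboundary of some $\mu:K^0_{\rm sst}(X)\to\{\pm1\}$, so $e^\al\mapsto\mu(\al)e^\al$ is an isomorphism of vertex $R$-algebras, giving independence of the representative of $[\ep]$.

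I expect the main obstacle to be the Riemann--Roch computation just sketched: extracting $\chi$ together with the full collection of Heisenberg/Clifford contraction operators out of $\exp(\ch(\Th^\bu))$ while correctly tracking all Koszul signs arising from the odd part $K^1_{\rm top}(X^{\rm an})$, and verifying that the exponential of the propagator truncates so that $Y(u,z)$ really is a field, i.e.\ has only finitely many negative powers of $z$ when applied to any state --- which should follow once the pole order on $\M_{[\al]}\times\M_{[\be]}$ is identified with $\chi(\al,\be)$ (resp.\ $\chi_{\rm sym}(\al,\be)$). By contrast the vector-space identification, the matching of the translation operators, and the cohomological argument for independence of $[\ep]$ are all routine once the generating fields have been shown to agree.
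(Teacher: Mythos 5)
Your overall strategy coincides with the paper's: identify the underlying space of states via Milnor--Moore and the Brown--Szczarba/Haefliger model of the evaluation map (Theorems \ref{hom5thm4} and \ref{hom5thm13}), compute Joyce's fields on the generators $u_{0,v,1}$ and the group-algebra part, and invoke the reconstruction theorem (Proposition \ref{hom3prop6}) to match the result with the generalized super-lattice vertex algebra. The sign discussion in case 2 (two solutions of (\ref{hom3eqn3})--(\ref{hom3eqn5}) differing by a coboundary give isomorphic vertex algebras via $e^\al \mapsto \mu(\al)e^\al$) is also the intended argument.

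The genuine gap is the step you describe as ``evaluate this class by Grothendieck--Riemann--Roch applied to $\cExt$.'' Within the paper's framework there is no off-the-shelf GRR theorem for the pushforward along $X \times \M_\al \times \M_\be \ra \M_\al \times \M_\be$: the target is a higher Artin stack, its Betti cohomology is defined through Simpson's realization, and $\cExt_{\al,\be}$ is only given as a perfect complex on the stack, not as anything to which an index formula visibly applies. The paper's substitute is Lemma \ref{hom6lem2}: one first shows that $(\cExt_{\al,\be})^{\rm Betti}$ agrees, as a map to $BU\times\Z$, with the index bundle of the family of twisted Dolbeault operators $D^{\nabla_{\overline{P}\times Q}}$ over $\Om^\infty K^{\rm top}(X^{\rm an})\times\Om^\infty K^{\rm top}(X^{\rm an})$. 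This requires (i) checking the agreement on the sub-ind-space $\Map_{\rm alg}(X,\Gr)^{\rm an}\times\Map_{\rm alg}(X,\Gr)^{\rm an}$, where the index-bundle description of $\R\pi_\ast$ is available on compact subsets, (ii) extending over the group completion using the weak universal property of Proposition \ref{hom2prop10} and group-likeness of ${\rm Fred}(H)$, and (iii) the class D hypothesis to transport the statement back to $\M^{\rm Betti}$. Only after this identification does the topological Riemann--Roch theorem (Dold, together with Atiyah--Singer for families) yield the formula of Proposition \ref{hom6prop3} for $\ch_i((\cExt_{\al,\be})^\vee)$ in terms of $\chi(v,w)$ and the generators $\mu_{\al,v,j}$. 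You correctly flagged this computation as the main obstacle, but the missing idea --- reducing to algebraic mapping spaces into Grassmannians and using the weak universal property of homotopy-theoretic group completion to propagate the index-theoretic description --- is exactly the non-routine content, and without it the ``GRR'' step does not go through as stated. The remainder of your outline (pole order from the rank $\chi(\al,\be)$ of $\Th^\bu_{\al,\be}$, truncation of the exponential via vanishing of cap products as in Lemma \ref{hom6lem5}, matching of translation operators) is consistent with Lemmas \ref{hom6lem4}--\ref{hom6lem7} and Theorem \ref{hom6thm8}.
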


\begin{ex}
\label{hom1ex2}
Let $X$ be a K3 surface and $\M,$ as in Theorem \ref{hom1thm1}. Then $K^0_{\rm top}(X^{\rm an})$ is a lattice and $K^1_{\rm top}(X^{\rm an}) \cong 0$ so that the fermionic part of (\ref{hom1eqn3}) vanishes. Thus $\hat{H}_\ast(\M,\Q)$ is the ordinary (graded) lattice vertex $R$-algebra associated to the Mukai lattice (with restricted group algebra $\Q[K^0_{\rm sst}(X)] \subset \Q[K^0_{\rm top}(X^{\rm an})]$).
\end{ex}

\begin{rem}
There was been much recent work on enumerative invariant theories taking values in K-theory and cobordism (see Arbsefeld \cite{Arb}, Cao--Kool--Monavari \cite{CKM}, G\"ottsche--Kool \cite{GoKo}, G\"ottsche--Nakajima--Yoshioka \cite{GNY}, Laarakker \cite{Laa}, Okounkov \cite{Ok}, Shen \cite{Shen}, and Thomas \cite{Th2}). Theorem \ref{hom6thm8} carries over wholesale for a rational complex-oriented cohomology theory $E$. This is because all rational complex-oriented spectra are canonically equivalent to graded Eilenberg--Maclane spectra. Without rationalizing, one should obtain a generalized vertex algebra called a {\it vertex $F$-algebra} where $F$ is the formal group law of $E$ (see forthcoming work of Gross--Upmeier \cite{GrUp} and Meinhardt \cite{Mein}).
\end{rem}

In section 2, we review the necessary background from algebraic topology: homotopy-theoretic group completions and some rational homotopy theory. Group completion theory allows us to reduce questions about maps from $\M^{\rm Betti}$ to a group-like H-space (such as a classifying space) to questions about maps from the underlying analytic space of the $\C$-ind-scheme Map$_{\rm alg}(X, \Gr)$. This $\C$-ind-scheme parameterizes globally generated algebraic vector bundles on $X$. This can turn certain questions about complexes into questions about vector bundles. For example, this technique was used in \cite{CaGrJo} to prove orientability of moduli spaces of coherent sheaves on Calabi--Yau 4-folds. This kind of argument is also used in section 5.

In section 3 we introduce graded vertex algebras. We give three examples: a graded vertex algebra associated to a finitely generated abelian group equipped with a symmetric $\Z$-bilinear form and a map from another finitely generated abelian group, a graded vertex algebra associated to a finitely generated abelian group which is equipped with an anti-symmetric $\Z$-bilinear form, and tensor products of graded vertex algebras. A {\it generalized super-lattice vertex algebra}, as referenced in Theorem \ref{hom1thm1}, is a tensor product graded vertex algebra which is associated to a generalized super-lattice that is equipped with a super-symmetric $\Z$-bilinear form. Usually one associates vertex algebras to lattices rather than finitely generated abelian groups. However we will want to associate graded vertex algebras to K-theory groups, which can have torsion. This generalization is easy.

In section 4 we define the Betti homology of a higher $\C$-stack using Simpson's Betti realization of simplicial presheaves and then briefly outline Joyce's graded vertex algebra construction. Then, we compute the rational homology of the moduli stack $\M$ of perfect complexes of coherent sheaves on a smooth complex projective variety $X$ in class D. In section 5, we use the description of $\hat{H}_\ast(\M, \Q)$ in terms of Chern characters to give explicit formulas for the state-to-field correspondences. From this, we are able prove Theorem \ref{hom1thm1} by calculation. \\

\noindent \emph{Acknowledgments.} The author thanks Dominic Joyce for supervising this project as well as for providing numerous suggestions and revisions. We thank Bertrand To\"en for suggesting we apply Blanc's work.  We also thank Yalong Cao, Eric Friedlander, Ian Grojnowski, Mikhail Kapranov, Martijn Kool, Kobi Kremnitzer, Sven Meinhardt, Jan Steinebrunner, Richard Thomas, Yukinobu Toda, and Markus Upmeier for helpful discussions. The author acknowledges support from the Simons Collaboration on Special Holonomy in Geometry, Analysis, and Physics during the time this document was produced.

\section{Algebraic topology background}

In this section, we give the necessary background on homotopy-theoretic group completions and rational homotopy theory.

\subsection*{Homotopy-theoretic group completion}

In this subsection, we review parts of the theories of H-spaces as in \cite{Stash}, homotopy-theoretic group completions as in \cite{May2}, and semi-topological K-theory as in \cite{FrWa1}.

An {\it H-space} is a CW complex $X$ together with a {\it multiplication} $m : X \times X \ra X$, and a {\it unit} $e : {\rm pt} \ra X$ such that $m$ is homotopy associative $m \circ (m \times 1_X) \simeq m \circ (1_X \times m)$ and $e$ is homotopy unital $1_X \simeq m \circ (1_X \times e) \simeq m \circ (e \times 1_X)$.  All our H-spaces are assumed to be homotopy commutative. An {\it H-map} $(X,m_X,e_X) \ra (Y,m_Y,e_Y)$ is a continuous map $f : X \ra Y$ such that $(f \times f) \circ m_y \simeq f \circ m_X$ and $f(e_X) = e_Y$.

If $X$ is an H-space then $\pi_0(X)$ is an abelian monoid. One says that an H-space $X$ is {\it group-like H-space} if $\pi_0(X)$ is an abelian group. This is equivalent to the condition that $m : X \times X \ra X$ admits a homotopy inverse. Any connected H-space is group-like \cite[Lem~9.2.2]{MaPo}. The following theorem is called the Milnor--Moore theorem and it is a powerful tool for computing the rational homology of connected finite type\footnote{A topological space is said to be {\it finite type} if all of its Betti numbers are finite.} H-spaces.

\begin{thm}[{see \cite[Thm.~9.2.5]{MaPo}, \cite{MiMo}}]
\label{hom2thm8}
Let $X$ be a finite type connected H-space. Then there is a natural isomorphism
    $$H_\ast(X,\Q) \cong {\rm SSym}[\pi_\ast(X) \otimes \Q]$$
of commutative-graded $\Q$-Hopf algebras.    
\end{thm}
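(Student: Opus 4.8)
The plan is to combine the structure theorem for graded Hopf algebras over $\Q$ with the rational Hurewicz theorem, exactly as in the references \cite[Thm.~9.2.5]{MaPo} and \cite{MiMo}. First I would record that $A := H_\ast(X,\Q)$ is a connected, commutative-graded, cocommutative Hopf algebra of finite type. The Pontryagin product is induced by the (homotopy-commutative) multiplication $m : X \times X \ra X$ together with the K\"unneth isomorphism $H_\ast(X,\Q) \otimes H_\ast(X,\Q) \xrightarrow{\ \sim\ } H_\ast(X\times X,\Q)$, hence is commutative-graded; the coproduct is induced by the diagonal $\De : X \ra X\times X$, hence cocommutative; the unit and counit come from $e:{\rm pt}\ra X$ and from $X\ra{\rm pt}$. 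Connectedness of $X$ gives $A_0 = \Q$, finite type is the hypothesis, and the antipode exists because a connected H-space is automatically group-like \cite[Lem.~9.2.2]{MaPo}.

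Second, I would invoke the Milnor--Moore structure theorem: over a field of characteristic zero, a connected cocommutative Hopf algebra of finite type is the universal enveloping algebra $U(PA)$ of its graded Lie algebra of primitives $PA = \{x\in A_{>0} : \De x = x\otimes 1 + 1\otimes x\}$, the natural map $U(PA)\ra A$ being an isomorphism of Hopf algebras. Since $A$ is also commutative-graded, $PA$ is an abelian graded Lie algebra, so PBW identifies $U(PA)$ with the free commutative-graded algebra ${\rm SSym}(PA)$. Thus $A\cong {\rm SSym}(PA)$ as Hopf algebras, naturally in Hopf-algebra maps (the isomorphism is the canonical one extending $PA\hookrightarrow A$).

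Third --- the step where the topology enters --- I would identify $PA$ with $\pi_\ast(X)\otimes\Q$ via the rational Hurewicz map $h:\pi_\ast(X)\otimes\Q\ra H_\ast(X,\Q)$. Its image lies in $PA$: for $f:S^n\ra X$ the class $f_\ast[S^n]$ is primitive because the reduced diagonal $\bar\De:S^n\ra S^n\wedge S^n$ is zero on reduced homology in degrees $\le n$, as $\tilde H_\ast(S^n\wedge S^n)$ is concentrated in degree $2n$. To see $h$ is an isomorphism onto $PA$, I would rationalize: an H-space is simple, so $X$ has a Postnikov tower, and for an H-space the rational $k$-invariants vanish (the Postnikov sections are H-spaces and the $k$-invariants are primitive of too high weight), so the rationalization splits as a weak product $\prod_{n\ge 1}K(\pi_n(X)\otimes\Q,n)$. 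Since $X$ is nilpotent of finite type, $H_\ast(X,\Q)\cong\bigotimes_n H_\ast(K(\pi_n(X)\otimes\Q,n),\Q)\cong\bigotimes_n{\rm SSym}(\pi_n(X)\otimes\Q)\cong{\rm SSym}(\pi_\ast(X)\otimes\Q)$, and tracing through the fundamental classes shows the primitives are exactly $h(\pi_\ast(X)\otimes\Q)$. Naturality of the K\"unneth, diagonal, Hurewicz, and rationalization maps then upgrades this to the asserted natural isomorphism of commutative-graded $\Q$-Hopf algebras.

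I expect the only genuine subtlety to be the third step: not merely that $\pi_\ast(X)\otimes\Q$ and $PA$ have the same graded dimensions, but that the Hurewicz map realizes the isomorphism onto the primitives. The rational Postnikov splitting of H-spaces makes this clean; the alternative route through the mod-$\mathcal{C}$ Hurewicz theorem for the Serre class of torsion abelian groups, combined with the structure theorem of step two, also works but is slightly more delicate to make natural.
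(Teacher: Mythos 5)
Your argument is correct, and it is essentially the standard proof from the sources the paper cites; the paper itself offers no proof of Theorem \ref{hom2thm8}, deferring entirely to \cite[Thm.~9.2.5]{MaPo} and \cite{MiMo}, so there is nothing internal to compare against. The one compressed step is the vanishing of the rational $k$-invariants (``primitive of too high weight''): the justification is that the primitives of a free commutative-graded Hopf algebra on primitive generators in degrees $\le n$ are exactly the span of those generators, so a primitive class in degree $n+2$ vanishes; alternatively you could bypass the Postnikov argument entirely by quoting Proposition \ref{hom2prop11}, since a minimal model with zero differential immediately gives $H^\ast(X,\Q)\cong{\rm SSym}_\Q[\pi_\ast(X)^\vee\otimes\Q]$ and the statement follows by dualizing.
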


\noindent Given an H-space that is not group-like, one can sometimes complete it to a group-like H-space.

\begin{dfn}
\label{hom2dfn9}
Let $X$ be an H-space. Then a {\it homotopy-theoretic group completion of $X$} is an H-map $\Xi : X \ra X^+$ to a group-like H-space $X^+$ such that 
\begin{itemize}
    \item $\pi_0(\Xi) : \pi_0(X) \ra \pi_0(X^+)$ is group completion of the monoid $\pi_0(X)$, and
    \item $H_\ast(\Xi,A) : H_\ast(X,A) \ra H_\ast(X^+,A)$ is module localisation by the natural $\pi_0(X)$ action, for every abelian group $A$.
\end{itemize}
\end{dfn}

\noindent Note that homotopy-theoretic group completions are not defined by a universal property. Caruso--Cohen--May--Taylor have demonstrated that homotopy-theoretic group completions have a {\it weak} universal property \cite{CCMT}. Two continuous maps $f,g : X \ra Y$ are said to be {\it weakly homotopy equivalent}, written $f \simeq^w g$ if for every continuous map $h : Z \ra X$ from a finite CW complex $Z$ there is a homotopy $f \circ h \simeq g \circ h$. 

\begin{prop}[see {\cite[Prop.~1.2]{CCMT}}] 
\label{hom2prop10}
Let $\Xi : X \ra X^+$ be a homotopy-theoretic group completion such that $\pi_0(X)$ contains a countable cofinal sequence. Then for any group-like H-space $Z$ and any weak H-map $f : X \ra Z$ there exists an H-map $g : X^+ \ra Z$, unique up to weak homotopy, such that $g \circ \Xi \simeq^w f$.
\end{prop}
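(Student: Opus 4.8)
The plan is to realise $X^+$, up to weak equivalence, as a sequential mapping telescope and to transport $f$ through it, exploiting that $Z$ is group-like to invert the translations that arise.

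\emph{Step 1 (a telescope model for $X^+$).} Since $\pi_0(X)$ contains a countable cofinal sequence, choose points $\xi_1,\xi_2,\dots\in X$ whose partial products $\xi_1\cdots\xi_n$ represent a cofinal sequence in the monoid $\pi_0(X)$, and form the mapping telescope
$$T=\mathrm{tel}\bigl(X\xrightarrow{R_{\xi_1}}X\xrightarrow{R_{\xi_2}}X\ra\cdots\bigr)$$
of the right-translations $R_{\xi_i}(x)=x\cdot\xi_i$, with its natural homotopy-commutative H-space structure (a routine check using homotopy associativity and commutativity of $m_X$) and the H-map $\iota\colon X\ra T$ including the initial stage. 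Then $\pi_0(T)=\mathrm{colim}_n\pi_0(X)$ is the group completion of $\pi_0(X)$, and $H_\ast(T,A)=\mathrm{colim}_nH_\ast(X,A)=H_\ast(X,A)[\pi_0(X)^{-1}]$ for every abelian group $A$, so $\iota$ is a homotopy-theoretic group completion. Moreover the proposition for $\iota$ implies it for the given $\Xi$: applied to the weak H-map $\Xi\colon X\ra X^+$ it yields $p\colon T\ra X^+$ with $p\iota\simeq^w\Xi$, and comparing $\pi_0$ and homology (both sides being the same group completion, resp.\ localisation, of those of $X$) shows $p$ is a homology- and $\pi_0$-isomorphism of nilpotent H-spaces, hence a weak equivalence; one then sets $g=g_T\circ p^{-1}$. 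So it suffices to treat $\iota\colon X\ra T$.

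\emph{Step 2 (transporting $f$ stagewise).} As $Z$ is group-like, each right-translation $\rho_i:=R_{f(\xi_i)}\colon Z\ra Z$ is a homotopy equivalence; fix homotopy inverses $\rho_i^{-1}$. Set $\gamma_n:=\rho_n^{-1}\circ\cdots\circ\rho_1^{-1}\circ f\colon X\ra Z$, so $\gamma_0=f$. Since $f$ is a weak H-map, $f\circ R_{\xi_{n+1}}\simeq^w\rho_{n+1}\circ f$, and since $Z$ is homotopy associative and commutative the $\rho_i$ commute up to homotopy; hence $\gamma_{n+1}\circ R_{\xi_{n+1}}\simeq^w\gamma_n$. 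Thus the $\gamma_n$ define a sequence of maps out of the stages of $T$ that is compatible with the transition maps \emph{up to weak homotopy}, and genuine compatibilities would glue to the desired $g_T\colon T\ra Z$ with $g_T\circ\iota=\gamma_0=f$.

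\emph{Step 3 (the main obstacle: rectifying weak coherence).} The crux is that Step 2 produces only weak homotopies where a map out of the telescope requires genuine ones — and this gap is precisely what makes the universal property \emph{weak}. I would close it by obstruction theory: take $\iota$ to be a cofibration, so $T$ is built from $X$ by attaching cells; $H_\ast(T,X;\Z)$ is annihilated by inverting $\pi_0(X)$ (being assembled from the kernel and cokernel of a localisation map), while $Z$ is a simple space whose homotopy groups are, via $f_\ast$ and the H-structure of $Z$, modules on which every $[\xi_i]$ acts invertibly; this forces the successive obstructions in $H^{\ast+1}(T,X;\pi_\ast Z)$ to extending $f$ over the new cells to vanish. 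Since weak homotopy is detected on finite subcomplexes of $T$, over which the homotopies of Step 2 are genuine, one carries this out as a filtered colimit over finite subcomplexes, controlling the $\lim^1$ term in the Milnor sequence $0\ra\lim^1_n[\Sigma X,Z]\ra[T,Z]\ra\lim_n[X,Z]\ra0$; the countability of the cofinal sequence (so that $T$ is a sequential, not transfinite, telescope) is exactly what keeps this tractable. This is the technical heart of the argument, carried out in \cite{CCMT}.

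\emph{Step 4 (uniqueness and the H-map property).} If $g_T'$ is another extension, then $g_T\cdot(g_T')^{-1}\colon T\ra Z$ (formed using the group-like structure of $Z$) is weakly nullhomotopic on $X\subset T$; as every finite complex in $T$ lands in some finite stage $T_n\simeq X$, on which a map is pinned down up to weak homotopy by its restriction to $X$ together with the invertible translations $\rho_i$, it is weakly nullhomotopic on all of $T$, so $g_T\simeq^wg_T'$. Finally $g_T\circ m_T$ and $m_Z\circ(g_T\times g_T)$ agree up to weak homotopy on $X\times X$ (as $f$ is a weak H-map and $m_T$ extends $m_X$), hence on $T\times T$ by the same telescope argument applied to the double telescope, and the group-like structure of $Z$ rectifies $g_T$ to a genuine H-map; transporting back along $p$ as in Step 1 completes the proof.
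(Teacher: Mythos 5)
The paper does not prove this proposition at all --- it is quoted verbatim from Caruso--Cohen--May--Taylor, so there is no in-paper argument to compare against. Your telescope-of-translations strategy is exactly the standard route to this statement (and the one underlying the cited source): model $X^+$ by $\mathrm{tel}(X\xrightarrow{R_{\xi_1}}X\ra\cdots)$, use group-likeness of $Z$ to invert the translations $\rho_i$ on the target, and finish with the homology Whitehead theorem for simple spaces. Steps 1, 2 and 4 are essentially right (modulo the fact that the ``routine check'' that $T$ is an H-space already hides the same rectification problem as Step 3, since $m_X$ intertwines the telescope transitions only after a change of speed and only up to homotopies that must be glued over the infinite telescope).

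The genuine gap is in Step 3, and one of the two mechanisms you offer there does not work as stated. The Milnor sequence $0\ra{\lim}^1_n[\Sigma X,Z]\ra[T,Z]\ra\varprojlim_n[X,Z]\ra 0$ computes \emph{genuine} homotopy classes, and to feed it an input you would need the tower $(\gamma_n)$ to satisfy $[\gamma_{n+1}\circ R_{\xi_{n+1}}]=[\gamma_n]$ in $[X,Z]$; Step 2 only provides this up to \emph{weak} homotopy (that is, after restriction to each finite subcomplex of $X$, with no compatibility between the restrictions), so $(\gamma_n)$ does not define an element of $\varprojlim_n[X,Z]$ and the sequence gives you nothing. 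This is not a presentational quibble: rectifying weak coherence into an actual map out of $T$ is the entire content of the proposition, and your proof explicitly defers it to the reference (``carried out in [CCMT]''), which is the one thing a self-contained proof cannot do. The salvageable half of your Step 3 is the obstruction-theoretic one: extend $f$ itself over the relative CW pair $(T,X)$, using that $H_\ast(T,X;\Z)$ is killed by inverting $\pi_0(X)$ while each $\pi_n(Z)$ is a $\Z[\pi_0(X)]$-module (via $f$ and translation) on which $\pi_0(X)$ acts invertibly, so that the obstruction groups $H^{n+1}(T,X;\pi_n Z)$ vanish. If you carry that out carefully (equivariance of the coefficient systems, the components of $T$ not meeting $X$, and simplicity of $Z$ to avoid local coefficients), you get a genuine extension $g_T$ with $g_T\circ\iota=f$ on the nose, and then your Steps 1 and 4 complete the argument; but as written the proof stops short of its own technical heart.
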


\noindent Suitably enhanced H-spaces admit homotopy-theoretic group completions. Two essentially equivalent such enhancements are Segal's $\Ga$-spaces \cite{Seg} and May's $E_\infty$-spaces \cite{May1}. This formalism is useful in defining the semi-topological K-theory of varieties. Let $\Gr$ denote the complex ind-scheme $$\Gr = \coprod_{n \geq 0} \Gr_n(\C^\infty).$$ Taking the underlying complex analytic space of  $\Gr$ gives
    $$\Gr^{\rm an} \simeq \coprod_{n \geq 0} BU(n),$$
which is an $E_\infty$-space whose homotopy-theoretic completion is $\coprod_{n \geq 0} BU(n) \ra BU \times \Z$. What is more, given any smooth complex projective variety $X$ the underlying analytic space of the mapping ind-scheme Map$(X,\Gr)^{\rm an}$ is an $E_\infty$-space (see \cite[Prop.~2.2]{FrWa2}). The homotopy-theoretic group completion of Map$(X,\Gr)^{\rm an}$ is called the {\rm semi-topological K-theory space of $X$} and is written $\Om^\infty K^{\rm sst}(X)$. The {\it $n^{\rm th}$ semi-topological K-group of $X$} is defined by $K^n_{\rm sst}(X) := \pi_n(\Om^\infty K^{\rm sst}(X))$.

Complex topological K-theory can be described similarly. Let $M$ be a CW complex. Then the mapping space Map$_{C^0}(M, \coprod_{n \geq 0} BU(n))$ is an $E_\infty$-space whose homotopy-theoretic group completion is equivalent to $\Om^\infty K^{\rm top}(M) := {\rm Map}_{C^0}(M, BU \times \Z)$. For a smooth complex projective variety $X$ the natural map Map$_{\rm alg}(X,\Gr) \ra {\rm Map}_{C^0}(X^{\rm, an}, \coprod_{n \geq 0} BU(n))$ induces a K-theory comparison morphism $\Om^\infty K^{\rm sst}(X) \ra \Om^\infty K^{\rm top}(X^{\rm an})$.

\subsection*{Rational homotopy theory}
The subject of rational homotopy theory was initiated by the foundational works of Quillen \cite{Qui2} and Sullivan \cite{Sul} on the algebraicisation of the homotopy types of rational spaces. The standard encyclopaedic reference for this subject in \cite{FeHaTh}. We recall only a small portion of rational homotopy theory that will be used in later proofs. Our only use of the theory will be to  compute the rational cohomology of an evaluation map. The reader may wish to skip this subsection.

If $X$ is a real manifold, there is an $\mathbb{R}$-cdga $\mathbb{R}$-cdga $(\Om^\ast_{\rm dR}(X),d_\text{dR})$ such that $H^\ast(\Om^\ast_{\rm dR})$. If $X$ is any topological space there is a $\mathbb{Q}$-cdga $(A_\text{PL}(X),d)$ called the {\it algebra of rational polynomial forms} such that $H^\ast((A_\text{PL}(X),d)) \cong H^\ast(X, \Q)$ (see \cite[\S~10]{FeHaTh}). When $X$ has the homotopy type of a real manifold there is an isomorphism $A_\text{PL}(X) \otimes \mathbb{R} \cong \Om^\ast_\text{dR}(X)$.

A {\it Sullivan algebra} is a $\Q$-cdga of the form $({\rm SSym}_\Q[V], d)$ that satisfies a nilpotency condition (see \cite[p.~138]{FeHaTh}). A {\it Sullivan model} of a $\Q$-cdga $(A,d_A)$ is a quasi-isomorphism $({\rm SSym}_\Q[V], d_V) \ra (A, d_A)$ from a Sullivan algebra. A {\it Sullivan model} of a $\Q$-cdga morphism $(A, d_A) \ra (B, d_B)$ is a $\Q$-cdga morphism $({\rm SSym}_\Q[V], d_V) \ra ({\rm SSym}_\Q[W], d_W)$ making the diagram
\begin{center}
	\begin{tikzcd}
		({\rm SSym}_\Q[V], d_V) \arrow{r} \arrow{d} & ({\rm SSym}_\Q[W], d_W) \arrow{d} \\
		(A, d_A) \arrow{r} & (B, d_B)
	\end{tikzcd}
\end{center}
commute, where $({\rm SSym}_\Q[V], d_V) \ra (A, d_A)$ and $({\rm SSym}_\Q[W], d_W) \ra (B, d_B)$ are both Sullivan models. Every Sullivan algebra is isomorphic to a particularly nice Sullivan algebra called its {\it minimal model}  \cite[\S12, Def.~1.3]{FeHaTh} which is unique up to isomorphism. A {\it Sullivan model} of a topological space $X$ is a Sullivan model of $(A_{\rm PL}(X), d)$ and a {\it minimal model} of $X$ is a minimal model of $(A_{\rm PL}(X), d)$. A {\it model} of a topological space $X$ is a $\Q$-cdga which admits a Sullivan model quasi-isomorphic to a Sullivan model of $X$. Models of continuous maps are defined analogously. Models of H-spaces very well-understood.

\begin{prop}
\label{hom2prop11}
Let $X$ be a connected and simply-connected finite type topological space. Then the minimal model of $X$ is of the form $({\rm SSym}_\Q[\pi_\ast(X)^\vee], d)$. If $X$ is moreover an H-space then its minimal model has vanishing differential.
\end{prop}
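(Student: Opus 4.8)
The plan is to treat the two assertions in turn: the first is a citation of the fundamental theorem of rational homotopy theory, and the second is a short dimension count built on it together with the Milnor--Moore theorem (Theorem~\ref{hom2thm8}).

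For the first statement, since $X$ is connected, simply-connected and of finite type it admits a minimal Sullivan model $({\rm SSym}_\Q[V],d)$ with each $V^n$ finite-dimensional; for simply-connected $X$ minimality is equivalent to $d(V)\subseteq{\rm SSym}^{\geq 2}_\Q[V]$, i.e. $d$ has no linear part. One then invokes the identification $V^n\cong(\pi_n(X)\otimes\Q)^\vee=\pi_n(X)^\vee$: the natural pairing between the indecomposables of a minimal Sullivan algebra and the rational homotopy groups of its spatial realisation is perfect, so that $\pi_n(|({\rm SSym}_\Q[V],d)|)\cong(V^n)^\vee$; composing with a rational equivalence $|({\rm SSym}_\Q[V],d)|\simeq_\Q X$ yields the claim. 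This is the content of Sullivan's theorem \cite[\S15]{FeHaTh}, which I would cite rather than reprove; it carries the real weight of the proposition, but is entirely standard.

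For the second statement, suppose in addition that $X$ is an H-space. By Theorem~\ref{hom2thm8} we have $H_\ast(X,\Q)\cong{\rm SSym}_\Q[\pi_\ast(X)\otimes\Q]$ as graded vector spaces; since $X$ is of finite type, passing to degreewise duals gives $\dim_\Q H^n(X,\Q)=\dim_\Q({\rm SSym}_\Q[\pi_\ast(X)^\vee\otimes\Q])^n$, and all of these dimensions are finite. By the first part the minimal model $({\rm SSym}_\Q[V],d)$ of $X$ has underlying graded algebra ${\rm SSym}_\Q[\pi_\ast(X)^\vee\otimes\Q]$, and being a model of $X$ it satisfies $H^\ast({\rm SSym}_\Q[V],d)\cong H^\ast(X,\Q)$. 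Hence $\dim_\Q H^n({\rm SSym}_\Q[V],d)=\dim_\Q({\rm SSym}_\Q[V])^n$ for every $n$. Writing $d_n$ for the component $d\colon({\rm SSym}_\Q[V])^n\to({\rm SSym}_\Q[V])^{n+1}$, rank--nullity gives $\dim_\Q H^n({\rm SSym}_\Q[V],d)=\dim_\Q({\rm SSym}_\Q[V])^n-\mathrm{rk}(d_n)-\mathrm{rk}(d_{n-1})$; comparison with the previous equality forces $\mathrm{rk}(d_n)=0$ for all $n$, so $d=0$.

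The only genuine obstacle is the first statement, i.e. Sullivan's identification of $V$ with $\pi_\ast(X)^\vee$; granted that, the H-space case is purely formal. The one point to verify carefully while writing is that the graded-vector-space isomorphism $H^\ast(X,\Q)\cong{\rm SSym}_\Q[\pi_\ast(X)^\vee\otimes\Q]$ really is immediate from Theorem~\ref{hom2thm8} in the possibly-infinite-dimensional but finite-type setting, which it is: over a field, finite type makes degreewise dualisation exact and gives $\dim_\Q H_n=\dim_\Q H^n$.
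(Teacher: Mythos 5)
The paper offers no proof of this proposition at all: it is stated as standard background in Section~2, with the surrounding text pointing to \cite{FeHaTh}, so there is no ``paper's route'' to compare against. Your proposal is correct. The first assertion is, as you say, exactly Sullivan's theorem identifying the generators of the minimal model of a simply-connected finite-type space with the duals of the rational homotopy groups (\cite[\S 15]{FeHaTh}), and citing it is appropriate. Your argument for the second assertion is also sound: finite type makes every graded piece of ${\rm SSym}_\Q[\pi_\ast(X)^\vee]$ finite-dimensional (the generators sit in degrees $\geq 2$ with finitely many in each degree), so the degreewise rank--nullity comparison between $H^\ast({\rm SSym}_\Q[V],d)$ and $\dim_\Q({\rm SSym}_\Q[V])^n$ legitimately forces ${\rm rk}(d_n)=0$ for all $n$. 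It is worth noting that your route, which goes through the Milnor--Moore theorem on homology and then dualises, is the homological mirror of the usual textbook argument: by Hopf--Borel the \emph{cohomology} of a connected finite-type H-space is already a free graded-commutative algebra, so $(H^\ast(X,\Q),0)$ is itself a minimal Sullivan model and uniqueness of minimal models gives $d=0$ directly. That version avoids the dimension count but needs the cohomological form of the structure theorem; yours only uses Theorem~\ref{hom2thm8} as quoted in the paper, which is arguably tidier in context. Either way the content is standard and your write-up has no gaps.
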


The rational homotopy theory of function spaces and evaluation maps is also now quite well-understood, having been studied by authors such as Buijs--Murillo \cite{BuMu}, F\'elix \cite{Fe}, F\'elix--Tanr\'e \cite{FeTa}, Kotani \cite{Kot}, Kuribayshi \cite{Kur}, Smith \cite{Smith}, and Vigu\'e-Poirrier \cite{ViPo}.

\begin{prop}[{see Buijs--Murillo \cite[Prop.~4.4, Thm.~4.5]{BuMu}, Brown--Szczarba \cite[Thms.~1.5, 6.1]{BrSz}, Haefliger \cite[Thm.~3.2]{Hae}}]
\label{hom2prop13}
Let $X$ be a finite CW complex and let $Y$ be a connected and simply-connected finite type CW complex. Let $({\rm SSym}_\Q[\pi^\ast(Y)], d_Y)$ be the minimal model for $Y$. Let $(A, d_A)$ be a finite-dimensional model for $X$. Let $A_\ast$ be the graded differential $\Q$-coalgebra whose $i^{\rm th}$ component is ${\rm Hom}(A_{-i}, \Q)$. There is a canonical $\Q$-cdga morphism
    $$\ep' : {\rm SSym}_\Q[\pi^\ast(Y)] \longrightarrow A \otimes {\rm SSym}_\Q[A_\ast \otimes \pi^\ast(Y)]$$
given in terms of an additive basis $v_1,\dots,v_n$ of $A$ and dual basis $v_1^\vee,\dots,v^\vee_n$ of $A_\ast$ by
    $$\ga \mapsto \sum_{i} v_i \otimes (v_i^\vee \otimes \ga).$$
There is a unique differential $d$ on ${\rm SSym}_\Q[A_\ast \otimes \pi^\ast(Y)]$ such that $\ep'$ is a $\Q$-cdga morphism. Note that $A_\ast \otimes \pi^\ast(Y)$ is not necessarily connective. Let $I$ denote the ideal of $A_\ast \otimes \pi^\ast(Y)$ generated by elements of negative degree and their differentials. Let $W$ be the quotient of $A_\ast \otimes \pi^\ast(Y)$ by $I$. Then
\begin{itemize}
    \item the $\Q$-cdga $({\rm SSym}_\Q[W],d)$ is a model for the connected component of the space of continuous maps $X \ra Y$ containing the constant map, and 
    \item $\ep'$ induces a $\Q$-cdga morphism $\ep : {\rm SSym}_\Q[\pi^\ast(Y)] \ra A \otimes {\rm SSym}_\Q[W]$ which is a model for the evaluation map.
\end{itemize}

\end{prop}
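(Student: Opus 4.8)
The plan is to reduce Proposition~\ref{hom2prop13} to a formal adjunction between cdgas, combined with the Bousfield--Gugenheim spatial realization functor, in the spirit of Brown--Szczarba and Haefliger. Write $V = \pi^\ast(Y)$, so that $({\rm SSym}_\Q[V], d_Y)$ is the minimal Sullivan model of $Y$, hence also of the rationalization $Y_\Q$. Since $X$ is a finite CW complex and $Y$ is simply-connected of finite type, each path component of ${\rm Map}(X,Y)$ is a nilpotent space of finite type whose rationalization is the corresponding component of ${\rm Map}(X, Y_\Q)$, so it suffices to build a $\Q$-cdga model for the constant-map component of ${\rm Map}(X, Y_\Q)$ and for the restriction of the evaluation map to it.

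\textbf{Step 1: the algebraic adjunction.} First I would prove that for every $\Q$-cdga $(B, d_B)$ there is a bijection, natural in $B$,
\[ \Hom_{\rm cdga}\bigl(({\rm SSym}_\Q[A_\ast \otimes V], d),\, (B, d_B)\bigr) \;\cong\; \Hom_{\rm cdga}\bigl(({\rm SSym}_\Q[V], d_Y),\, (A, d_A)\otimes(B, d_B)\bigr). \]
Forgetting differentials, this is just freeness of ${\rm SSym}_\Q$ together with the tensor--hom adjunction and the finite-dimensionality of $A$: an algebra map ${\rm SSym}_\Q[A_\ast\otimes V]\to B$ is a linear map $A_\ast\otimes V\to B$, equivalently a linear map $V\to A\otimes B$, equivalently an algebra map ${\rm SSym}_\Q[V]\to A\otimes B$; the universal such map is exactly $\ep'(\ga) = \sum_i v_i\otimes(v_i^\vee\otimes\ga)$. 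One then checks that there is a unique degree $+1$ derivation $d$ on ${\rm SSym}_\Q[A_\ast\otimes V]$ making $\ep'$ a cochain map: its value on a generator $v_i^\vee\otimes\ga$ is read off as the coefficient of $v_i$ in $\ep'(d_Y\ga) - \sum_j (d_A v_j)\otimes(v_j^\vee\otimes\ga)$, and $d^2 = 0$ follows from $d_A^2 = d_Y^2 = 0$ and freeness. With this $d$ the displayed bijection restricts to cdga morphisms, since under the correspondence a morphism is a cochain map iff its transpose is, by the defining identity $\ep' d_Y = d\ep'$.

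\textbf{Step 2: spatial realization and the truncation.} Taking $B = A_{\rm PL}(\Delta^\bullet)$ and applying Step~1 levelwise yields an isomorphism of simplicial sets
\[ \langle {\rm SSym}_\Q[A_\ast\otimes V], d\rangle \;=\; \Hom_{\rm cdga}\bigl({\rm SSym}_\Q[A_\ast\otimes V], A_{\rm PL}(\Delta^\bullet)\bigr) \;\cong\; \Hom_{\rm cdga}\bigl({\rm SSym}_\Q[V], A\otimes A_{\rm PL}(\Delta^\bullet)\bigr). \]
Because $A$ is a finite-dimensional model for $X$, the K\"unneth quasi-isomorphism gives $A\otimes A_{\rm PL}(\Delta^n)\simeq A_{\rm PL}(X\times\Delta^n)$, so the right-hand simplicial set computes the derived cdga mapping space from the cofibrant object ${\rm SSym}_\Q[V]$ into $A_{\rm PL}(X)$, which by the Bousfield--Gugenheim adjunction is weakly equivalent to ${\rm Sing}\,{\rm Map}(X, Y_\Q)$. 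On the left, since $A_\ast\otimes V$ has generators in negative degrees, the realization $\langle{\rm SSym}_\Q[A_\ast\otimes V]\rangle$ is not connected to every component; the essential point, due to Brown--Szczarba, is that up to homotopy it is precisely the constant-map component, and that passing to the quotient by the differential ideal $I$ generated by the negative-degree generators induces a weak equivalence $\langle{\rm SSym}_\Q[A_\ast\otimes V]\rangle \simeq \langle{\rm SSym}_\Q[W]\rangle$ onto that component (the negative-degree generators and their differentials contribute only acyclically). Hence $({\rm SSym}_\Q[W], d)$, with $d$ descended from ${\rm SSym}_\Q[A_\ast\otimes V]$, is a model for the constant-map component of ${\rm Map}(X,Y)$.

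\textbf{Step 3: the evaluation map, and the main obstacle.} Feeding $B = {\rm SSym}_\Q[A_\ast\otimes V]$ into Step~1 and tracking the identity morphism shows that $\ep'$ is the cdga morphism classifying the universal family; under the exponential law the identity self-map of ${\rm Map}(X,Y)$ corresponds to the evaluation ${\rm ev}\colon X\times{\rm Map}(X,Y)\to Y$, $(x,f)\mapsto f(x)$, so $\ep'$, and after restriction its composite $\ep$ with the projection ${\rm SSym}_\Q[A_\ast\otimes V]\to{\rm SSym}_\Q[W]$, is a model for the evaluation map restricted to $X$ times the constant-map component. I expect Step~2 to be the main obstacle: identifying $\Hom_{\rm cdga}({\rm SSym}_\Q[V], A\otimes A_{\rm PL}(\Delta^\bullet))$ with ${\rm Map}(X, Y_\Q)$ (which uses the finiteness of $X$ and the good behaviour of $A_{\rm PL}$ on the products $X\times\Delta^n$) and, above all, proving that the non-connective algebra ${\rm SSym}_\Q[A_\ast\otimes V]$ realizes exactly the constant-map component with ${\rm SSym}_\Q[W]$ as a model --- this is the technical heart of Brown--Szczarba's theorem and is precisely what justifies the otherwise ad hoc truncation by $I$. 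Steps~1 and~3 are then formal.
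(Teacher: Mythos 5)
The paper does not prove this proposition: it is imported verbatim from the cited literature (Brown--Szczarba, Haefliger, Buijs--Murillo), so there is no in-paper argument to compare against. Your outline is a faithful reconstruction of the strategy of those sources: the representability adjunction $\Hom_{\rm cdga}({\rm SSym}_\Q[A_\ast\otimes V],B)\cong\Hom_{\rm cdga}({\rm SSym}_\Q[V],A\otimes B)$ for finite-dimensional $A$ (which for a free source needs no quotient by coalgebra relations, as you say), the spatial realization against $A_{\rm PL}(\Delta^\bullet)$, and the identification of $\ep'$ with the evaluation map via the exponential law are exactly Brown--Szczarba's Theorems 1.5/6.1 and Haefliger's Theorem 3.2. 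One correction to Step 2: the realization $\langle{\rm SSym}_\Q[A_\ast\otimes V]\rangle$ is not ``up to homotopy precisely the constant-map component'' --- by your own adjunction it computes the whole of ${\rm Map}(X,Y_\Q)$, all components at once; what is true is that the surjection ${\rm SSym}_\Q[A_\ast\otimes V]\to{\rm SSym}_\Q[W]$ induces on realizations the \emph{inclusion of the constant-map component} into this full mapping space, not a weak equivalence of the total spaces. That is the content of the truncation by $I$, and it is the step you rightly identify as the technical heart; since you defer its proof to Brown--Szczarba (as the paper itself does), this misstatement does not propagate, but it should be fixed if the sketch is to stand on its own.
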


\noindent If $X$ is a CW complex such that there exists a model (not necessarily minimal) of $X$ with vanishing differential, then $X$ is said to be {\it formal}. H-spaces, symmetric spaces, and compact K\"ahler manifolds are examples of formal spaces.

\section{Graded vertex algebras}

Some references for the basic vertex algebra theory presented here are \cite{Bor1}, \cite{Kac} and \cite{LeLi}. We, after \cite{Joy2}, work with $\Z$-graded vertex $R$-algebras. Throughout, $R$ will denote a fixed commutative unital ring.

A formal power series $\sum_{n \in \Z} v_n z^{-n-1}$ is called a {\it field} if $v_n = 0$ for $n \gg 0$.  For a graded $R$-module $V$ we write $\mathcal{F}(V)$ for the graded $R$-module of End$_R(V)$-valued fields. Grade $\mathcal{F}(V)$ so that a field of degree $n$ is a map $V \ra V[[z,z^{-1}]]$ of degree $n$, where $z$ has degree $-2$.

\begin{dfn} 
\label{hom3dfn1}
A \emph{graded vertex $R$-algebra} is the data of
\begin{itemize}
	\item a $\Z$-graded $R$-module $V = \bigoplus_{n \in \Z} V_n$ called a {\it space of states},
        \item a distinguished vector $| 0 \rangle \in V_0$ called the {\it vacuum vector}, 
        \item a graded $R$-linear map $V \ra \mathcal{F}(V)$ called a {\it state-to-field correspondence}, and 
        \item a graded operator $e^{zD} : V \ra V[[z]]$
\end{itemize} 
such that, for all $u,v,w \in V$,
\begin{itemize}
	\item $Y(| 0 \rangle ,z) v = v$, $Y(u,z) | 0 \rangle = e^{zD} v$,
	\item there exists $N \gg 0$ such that
		$$(z_1 + z_2)^N Y(Y(u,z_1)v,z_2) w = (z_1 + z_2)^N Y(u,z_1 + z_2) \circ Y(v,z_2) w,$$
		and
	\item $Y(u,z)v = (-1)^{{\rm deg}(u) {\rm deg}(v)} e^{zD} \circ Y(v,-z)u.$
\end{itemize}
\end{dfn}

\begin{dfn}
\label{hom3dfn2}
Let $(V,Y_V, |0\rangle_V)$ and $(W,Y_W, |0\rangle_W)$ be a graded vertex $R$-algebra. Their {\it tensor product} is a graded vertex algebra $(V\otimes W,Y_{V \otimes W}, |0\rangle_{V \otimes W})$ whose space of states is the graded tensor product $V \otimes W$, whose vacuum vector is given by $|0\rangle_{V \otimes W} = | 0 \rangle_V \otimes | 0 \rangle_W$, and whose state-to-field correspondence is given by $$Y_{V \otimes W}(v \otimes w, z)(u \otimes t) = (-1)^{({\rm deg}(v) + {\rm deg}(w)) {\rm deg}(u)} Y_V(v,z)u \otimes Y_W(w,z)t.$$
\end{dfn}

It is often more useful to describe a vertex algebra in terms of a generating set of fields, rather than by describing all fields. 

\begin{dfn}
\label{hom3dfn3}
Let $(V,| 0 \rangle)$ be a pair consisting of a graded $R$-module $V$ and distinguished element $| 0 \rangle \in V_0$. Let $\{a^i(z) = \sum_{n \in \Z} a^i_n z^{-n-1}\}_{i \in I}$ be a set of ${\rm End}_R(V)$-valued fields. Let $a^i = a^i(z) | 0 \rangle |_{z=0}$. Then $\{a^i(z)\}_{i \in I}$ is said to {\it generate} $(V,| 0 \rangle)$ if the collection of vectors of the form $a^{i_1}_{n_1} \dots a^{i_s}_{n_s} | 0 \rangle$ span $V$.
\end{dfn}

\begin{prop}[{Reconstruction Theorem \cite[Thm.~4.5]{Kac}}]
\label{hom3prop4}
Let $V$ be a graded $R$-module with a distinguished vector $| 0 \rangle \in V$. Let $\{a^i(z)\}_{i \in I}$ be a mutually local collection of ${\rm End}_R(V)$-valued fields that generate $(V, | 0 \rangle)$. Let $T : V \ra V$ be a derivation of degree $2$ such that $T | 0 \rangle = 0$ and such that the collection is {\it $T$-covariant} $[T, a^i(z)] = \partial_z a^i(z)$. Then there is a unique graded vertex $R$-algebra structure on $V$ such that $| 0 \rangle$ is the vacuum vector and such that the state-to-field correspondence maps $a^i(z)| 0 \rangle|_{z=0} \mapsto a^i(z)$.
\end{prop}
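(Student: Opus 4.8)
The plan is to run the classical bootstrap proof of the Reconstruction Theorem (cf.\ \cite[\S\S4.2--4.5]{Kac}) in the $\Z$-graded super setting over $R$, with all Koszul signs governed by the underlying $\Z/2$-grading exactly as in Definition~\ref{hom3dfn1}. Since $\{a^i(z)\}_{i\in I}$ generates $(V,|0\rangle)$, every element of $V$ is an $R$-linear combination of monomials $w = a^{i_1}_{-n_1-1}\cdots a^{i_s}_{-n_s-1}|0\rangle$ with all $n_j \geq 0$; here one first notes $a^i_n|0\rangle = 0$ for $n \geq 0$, which follows by applying the $T$-covariance relation $[T,a^i(z)] = \partial_z a^i(z)$ to $|0\rangle$ and using $T|0\rangle = 0$, so that indeed only the ``creation'' modes of the generating fields are needed. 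For such a $w$ I would \emph{define}
$$Y(w,z) \;=\; {:}\partial_z^{(n_1)} a^{i_1}(z)\cdots \partial_z^{(n_s)} a^{i_s}(z){:},$$
the iterated normally ordered product of divided-power derivatives, extend $R$-linearly, and take $e^{zD} := e^{zT}$. Both the divided power $\partial_z^{(k)} = \tfrac{1}{k!}\partial_z^k$ of a field (its action on Laurent monomials has \emph{integer}, binomial-coefficient coefficients) and the normally ordered product (splitting each field into its creation and annihilation parts) are purely formal, denominator-free operations, so this construction makes sense over an arbitrary commutative ring $R$.

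Next I would assemble the standard ancillary facts: (a) a normally ordered product of fields is a field; (b) Dong's Lemma, so that pairwise (super-)locality of the $a^i(z)$ propagates by induction to pairwise locality of all the $Y(w,z)$; (c) the Leibniz rule $[T,{:}A(z)B(z){:}] = {:}\partial_z A(z)\,B(z){:} + {:}A(z)\,\partial_z B(z){:}$, whence $[T,Y(w,z)] = \partial_z Y(w,z)$ for all $w$ by induction on $s$; and (d) the creation property $Y(w,z)|0\rangle \in V[[z]]$ with $Y(w,z)|0\rangle|_{z=0} = w$, again by induction on $s$ using $a^i_n|0\rangle = 0$ for $n\geq 0$. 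Integrating (c) at the vacuum then upgrades (d) to $Y(w,z)|0\rangle = e^{zT}w$.

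The crux --- and what I expect to be the main obstacle --- is \emph{well-definedness}: the spanning monomials are not a basis, so I must show $Y(w,z)$ depends only on $w\in V$. This is forced by a Goddard-type uniqueness lemma: any field $b(z)$ that is (super-)local with respect to every $a^i(z)$ and satisfies $b(z)|0\rangle = e^{zT}c$ is uniquely determined by $c$. One proves this directly --- expand $(z-w)^N\bigl(a^i(z)b(w) \mp b(w)a^i(z)\bigr) = 0$, apply it to $|0\rangle$, substitute $a^i(z)|0\rangle = e^{zT}a^i$ and $b(w)|0\rangle = e^{wT}c$, and solve for the modes of $b(w)$ --- and then extends from generators to all of $V$ by inducting on monomial length, since each $Y(w,z)$ is local with respect to the $a^i(z)$ by (b). As every field attached to an expression for $w$ satisfies these hypotheses with $c=w$, they all agree. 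This is also the step where the super-signs require the most bookkeeping; over general $R$ no extra difficulty arises, by the denominator-freeness noted above.

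Finally I would verify the axioms of Definition~\ref{hom3dfn1}: the vacuum and creation axioms $Y(|0\rangle,z)v = v$ and $Y(w,z)|0\rangle = e^{zD}w$ are immediate from the construction and from (d); mutual locality of all the $Y(w,z)$ is repeated Dong's Lemma (b); and translation covariance $[D,Y(w,z)] = \partial_z Y(w,z)$ is (c). The skew-symmetry axiom $Y(u,z)v = (-1)^{\deg u\,\deg v}e^{zD}Y(v,-z)u$ and the weak associativity axiom then follow from locality, the vacuum axiom, and translation covariance by the standard extraction arguments (the graded version of \cite[Thm.~4.4]{Kac}), producing the desired graded vertex $R$-algebra structure. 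Uniqueness is then automatic: in any such structure $D$ must equal $T$ (both are degree-$2$ operators annihilating $|0\rangle$ and satisfying $[{-},a^i(z)] = \partial_z a^i(z)$ on generators), and weak associativity together with the creation axiom forces $Y(w,z)$ on each monomial $w = a^{i_1}_{-n_1-1}\cdots a^{i_s}_{-n_s-1}|0\rangle$ to be exactly the iterated normally ordered product above --- so any two such structures coincide with the one constructed.
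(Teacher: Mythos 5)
The paper gives no proof of this proposition---it is imported directly from Kac \cite[Thm.~4.5]{Kac}---and your argument is exactly the standard existence-and-uniqueness proof from that source (normally ordered products of divided-power derivatives of the generating fields, Dong's lemma for propagating locality, and a Goddard-type uniqueness lemma to handle well-definedness on a spanning set that is not a basis), transplanted to the graded setting over $R$, so it is correct and matches the intended proof. The one step to watch is your derivation of $a^i_n|0\rangle=0$ for $n\ge 0$: besides $T$-covariance and $T|0\rangle=0$ it also uses the field condition $a^i_n|0\rangle=0$ for $n\gg 0$ and the invertibility (or at least non-zero-divisor property) of the positive integers on $V$, so over a general commutative ring $R$ it is safer to read this vanishing as part of the hypothesis that the $a^i(z)$ generate $(V,|0\rangle)$, since Definition~\ref{hom3dfn3} already presupposes that $a^i(z)|0\rangle|_{z=0}$ is defined.
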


A {\it generalized integral lattice} is a finitely generated abelian group equipped with a symmetric $\Z$-bilinear form. Let $(A^+,\chi^+)$ be a generalized integral lattice and let $\iota : B^+ \ra A^+$ be a map of finitely generated abelian groups. Let $\mathfrak{h} := A^+ \otimes_\Z R$. Then $\chi^+$ extends to an $R$-valued symmetric bilinear form on $\mathfrak{h}$ which, by abuse of notation, we will also call $\chi^+$. Write $\hat{\mathfrak{h}} = \mathfrak{h} \otimes R[t,t^{-1}] \oplus R \underline{K}$ for the Lie algebra with bracket
    $$[h \otimes t^n, h' \otimes t^m] = m \delta_{m,-n} \chi^+(h,h') \underline{K}, \hspace{1em} [h \otimes t^m,\underline{K}] = 0$$
for all $h,h' \in \mathfrak{h}, n,m \in \Z$.
There is a representation $\rho_1 : \hat{\mathfrak{h}} \ra {\rm Sym}_R(A^+ \otimes t^{-1}R[t^{-1}])$ such that $\mathfrak{h} \otimes t^{-1}R[t^{-1}]$ acts by multiplication, $\mathfrak{h}[t] \cdot 1 = 0$, and $\underline{K}$ acts as the identity. There is another Lie algebra representation $\rho_2 : \hat{\mathfrak{h}} \ra R[B^+]$ where $\underline{K}$ acts by zero and $h \otimes t^n$ acts on $e^\al$ as multiplication by $\delta_{n,0} \chi^+(\iota(\al),h)$. Write $V_{A^+,B^+} := R[B^+] \otimes {\rm Sym}_R(A^+ \otimes t^{-1}R[t^{-1}])$. There is a representation $\rho : \hat{\mathfrak{h}} \ra V_{A^+,B^+}$ defined by $\rho = \rho_1 \otimes 1 + 1 \otimes \rho_2$. Given $v \in \mathfrak{h}$ write $$v(z) := \sum_{n \in \Z}  \rho(v \otimes t^n) z^{-n-1}$$
and 
    $$\Ga_\al(z) := e^\al z^{\al_0} {\rm exp}(-\sum_{j < 0} \frac{z^{-j}}{j} \al_j) {\rm exp}(-\sum_{j > 0} \frac{z^{-j}}{j} \al_j) c_\al,$$
where $c_\al$ are operators $c_\al : V_{A^+,B^+} \ra V_{A^+,B^+}$ such that 
\begin{equation}
\label{hom3eqn1}
    c_0 =1, \hspace{1em} c_\al | 0 \rangle = | 0 \rangle, \hspace{1em} [v_n,c_\al]=0 \hspace{1em} (v \in \mathfrak{h}, n \in \Z),
\end{equation}
and given $\al \in B^+$
\begin{equation}
\label{hom3eqn2}
    e^\al c_\al e^\be c_\be = (-1)^{\chi^+(\iota(\al),\iota(\be)) + \chi^+(\iota(\al),\iota(\al)) \chi^+(\iota(\be),\iota(\be))} e^\be c_\be e^\al c_\al .  
\end{equation}
Then the collection of fields $v(z)$ and $\Ga_\al(z)$ generate a graded vertex $R$-algebra structure on $V_{A^+,B^+}$. For any given solution of (\ref{hom3eqn1}) and (\ref{hom3eqn2}) there is a unique such graded vertex $R$-algebra structure (cf. \cite[Thm.~5.4]{Kac}). The only solutions of (\ref{hom3eqn1}) and (\ref{hom3eqn2}) that concern us are scaling operators
    $$c_\al(e^\be \otimes x) = \ep_{\al,\be} e^\be \otimes x, \hspace{1em} (\al,\be \in B^+)$$
with $\ep_{\al,\be} \in \{\pm 1\}$. For such $c_{\al}$, equations (\ref{hom3eqn1}) and (\ref{hom3eqn2}) are satisfied if and only if for all $\al,\be,\ga \in B^+$
\begin{align}
    \ep_{\al,0} & = \ep_{0,\al} = 0 \label{hom3eqn3} \\
    \ep_{\al,\be} & = (-1)^{\chi^+(\iota(\al),\iota(\be)) + \chi^+(\iota(\al),\iota(\al)))\chi^+(\iota(\be),\iota(\be))} \ep_{\be,\al} \label{hom3eqn4} \\
    \ep_{\be,\ga} \ep_{\be + \ga, \al} & = \ep_{\ga, \al+\be} \ep_{\be,\al} \label{hom3eqn5}.
\end{align}

\noindent There always exist solutions to (\ref{hom3eqn3})-(\ref{hom3eqn5}) \cite[Lem.~4.5]{Joy2} (see also \cite[Cor.~5.5]{Kac} \footnote{Note that Kac proves any solution of (\ref{hom3eqn3})-(\ref{hom3eqn5}) will be unique up to equivalence. However, Kac is working with lattices. In general, the ambiguity in choosing a solution of (\ref{hom3eqn3})-(\ref{hom3eqn5}) is controlled by the 2-torsion in $B^+$ \cite[Thm.~2.27]{JTU}.}). Note that (\ref{hom3eqn3}) and (\ref{hom3eqn5}) imply $\ep : B^+ \times B^+ \ra \Z/2\Z$ is a group 2-cocycle.

One makes $V_{A^+,B^+}$ into a {\it graded} vertex $R$-algebra by declaring $e^\al \otimes (v \otimes t^{-n})$ to be of degree $2n + 2 - \chi^+(\iota(\al),\iota(\al))$. This is called the {\it generalized lattice vertex $R$-algebra} associated to $(A^+,\chi^+)$ and $\iota$. When $B^+=A^+$ with $\iota = {\rm Id}_{A^+}$ and $A^+$ torsion-free this is called the (graded) lattice vertex $R$-algebra associated to the integral lattice $(A^+,\chi^+)$ (see \cite[\S~5.4]{Kac}, \cite[\S~6.4-5]{LeLi}). 

Given a finitely generated abelian group $A^-$ equipped with an anti-symmetric integral bilinear form $\chi^-$ one can construct a similar graded vertex $R$-algebra. Consider the Lie algebra $\mathfrak{h}^- := A^- \otimes t^{\frac{1}{2}}R[t,t^{-1}] \oplus R \underline{K}$ with commutation relations
    $$[v \otimes t^{m+\frac{1}{2}}, w \otimes t^{n+\frac{1}{2}}]_+ = m \chi^-(v,w) \delta_{m,-n} \underline{K}, \hspace{1em} [\underline{K}, \mathfrak{h}^-] = 0.$$
Let $\mathcal{A} = U(\mathfrak{h}^-)/(\underline{K}-1)$ and let $\mathcal{A}_{\geq 0}$ denote the ideal of $\mathcal{A}$ generated by elements of the form $(v \otimes t^{m+\frac{1}{2}}) \cdot 1$ with $v \in A^-$ and $m \geq 0$. Then there is a natural Lie algebra representation $\rho^-$  of $\mathfrak{h}^-$ on $\mathcal{A}/\mathcal{A}_{\geq 0} \cong \bigwedge (A^- \otimes t^{-\frac{1}{2}}R[t^{-1}])$. Make $\bigwedge (A^- \otimes t^{-\frac{1}{2}}R[t^{-1}])$ into a graded $R$-algebra by declaring $v \otimes t^{-i-\frac{1}{2}}$ to be of degree $2i+1$. For $v \in A^-$ write $v(z) = \sum_{n \in \Z} \rho^-(v \otimes t^{n+\frac{1}{2}}) z^{-n-1}$. The collection of fields $\{v(z)\}_{v \in A^+}$ generate a graded vertex $R$-algebra structure on $\bigwedge (A^- \otimes t^{-\frac{1}{2}}R[t^{-1}])$. When $A^-$ is torsion-free and $\chi^-$ is non-degenerate, this construction gives Abe's symplectic Fermionic vertex operator algebra \cite{Abe}.\footnote{Abe's symplectic Fermionic vertex algebras have state space of the form $\bigwedge (A^- \otimes t^{-1}R[t^{-1}])$ rather than $\bigwedge (A^- \otimes t^{-\frac{1}{2}}R[t^{-1}])$. We use the factor $t^{-\frac{1}{2}}$ so that all generators are in odd degrees.}

A {\it generalized integral super-lattice} is a finitely generated abelian group $A$, which is written as a direct sum $A= A^+ \oplus A^-$ of two generalized lattices, equipped with a $\Z$-valued bilinear form $\chi$ such that $\chi^+ := \chi|_{A^+}$ is symmetric and $\chi^- := \chi|_{A^-}$ is anti-symmetric. 

\begin{dfn}
\label{hom3dfn5}
Let $(A=A^+\oplus A^-,\chi)$ be a generalized integral super-lattice. Let $\iota : B^+ \ra A^+$ be a map of finitely generated abelian groups. Let $V_{A^+,B^+}$ be the generalized lattice vertex $R$-algebra associated to $(A^+, \chi^+)$ and $\iota$. Let $V_{A^-}$ denote the graded vertex $R$-algebra associated to $(A^-,\chi^-)$. Write $V_{A} = V_{A^+,B^+} \otimes V_{A-}$ for the tensor product graded vertex $R$-algebra. This is called the {\it generalized super-lattice vertex $R$-algebra associated to $(A=A^+\oplus A^-,\chi)$ and $\iota$}. It is defined up to a choice of solution to (\ref{hom3eqn3})-(\ref{hom3eqn5}) .
\end{dfn}

\noindent Let $(A=A^+ \oplus A^-,\chi)$ be an integral super-lattice, $\iota : B^+ \ra A^+$ be a map of finitely generated abelian groups, $Q^+,Q^-$ be additive bases for $A^+ \otimes R, A^- \otimes R$ respectively, and $Q= Q^+ \cup Q^-$.  Then the map
 \begin{equation}
    \label{hom3eqn7}
        e^\al \otimes (v^+ \otimes t^{-i})^{n_{v^+,i}} \otimes (v^- \otimes t^{-j-\frac{1}{2}})^{m_{v^-,j}} \mapsto e^\al \otimes u_{v^+,i}^{n_{v^+,i}} \cdot u_{v^-,j}^{m_{v^-,j}}
    \end{equation}
defines an isomorphism of graded $R$-algebras
        \begin{equation}
        \label{hom3eqn6}
            V_A \cong R[B^+] \otimes {\rm SSym}_R[u_{v,i} : v \in Q, i \geq 1]
        \end{equation}
    where the right hand side of (\ref{hom3eqn6}) is graded by declaring $e^\al \otimes \prod_{v \in Q, i \geq 1} u^{n_{v,i}}_{v,i}$ to be of degree $\sum_{v^+ \in Q^+, i \geq 1} 2i n_{v^+,i} + \sum_{v^- \in Q^-, i \geq 1} (2i+1) n_{v,i} + 2 - \chi^+(\iota(\al),\iota(\al))$.

\noindent For $v^+ \in A^+ \otimes R$ one writes $$v^+(z) := \sum_{n \in \Z} \rho^+(v^+ \otimes t^n) \otimes {\rm Id} z^{-n-1}$$ and for $v^- \in A^- \otimes R$ one writes $$v^-(z) := \sum_{n \in \Z} {\rm Id} \otimes \rho^-(v^- \otimes t^{n+\frac{1}{2}}) z^{-n-1}.$$ 
\noindent The following is a consequence of the reconstruction theorem.

\begin{prop}
\label{hom3prop6}
Let $A,A^+,A^-,\chi^+,B^+, \iota,Q,Q^+$, and $Q^-$ be as above. Then any graded vertex $R$-algebra with space of states $R[B^+] \otimes {\rm SSym}_R[u_{v,i} : v \in Q, i \geq 1]$ such that
    \begin{equation}
    \label{hom3eqn8}
        (u_{0,v,1})_n = v(z)_n
    \end{equation}
for all $v \in Q, n \in \Z$ is isomorphic to the generalized super-lattice vertex $R$-algebra associated to $(A,\chi)$ and $\iota$.
\end{prop}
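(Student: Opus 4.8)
The plan is to deduce the statement from the uniqueness clause of the Reconstruction Theorem (Proposition~\ref{hom3prop4}). Write $M = R[B^+] \otimes {\rm SSym}_R[u_{v,i} : v \in Q, i \geq 1]$ for the common space of states, $|0\rangle = e^0 \otimes 1$ for the vacuum, and for each $\al \in B^+$ abbreviate $\xi_\al := e^\al \otimes 1 \in M$; let $Y$ and $D$ denote the state-to-field correspondence and the translation operator of the given graded vertex $R$-algebra structure on $M$. By the hypothesis (\ref{hom3eqn8}), $Y(u_{0,v,1}, z) = v(z)$ is the bosonic (resp.\ fermionic) Heisenberg field for every $v \in Q$, and I write $v_n$ for its $n$-th mode. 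Mutual locality and $D$-covariance of the family $\{v(z)\}_{v \in Q} \cup \{Y(\xi_\al, z)\}_{\al \in B^+}$ are automatic (every pair of fields of a vertex algebra is mutually local; $D$ has degree $2$, $D|0\rangle = 0$, and $[D, Y(w,z)] = \partial_z Y(w,z)$ for all $w$), so the only remaining hypotheses of Proposition~\ref{hom3prop4} to check are that this family generates $(M,|0\rangle)$ in the sense of Definition~\ref{hom3dfn3} and that its fields coincide with the generating fields of the relevant generalized super-lattice vertex $R$-algebra. Both reduce to identifying the fields $Y(\xi_\al, z)$, which is the substance of the proof.

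Put $F_\al(z) := Y(\xi_\al, z) = \sum_n (\xi_\al)_n z^{-n-1}$. Creativity gives $F_\al(z)|0\rangle = e^{zD}\xi_\al$, and $D$ on $M$ is pinned down by $[D, v(z)] = \partial_z v(z)$ together with the vertex algebra structure already fixed on $e^0 \otimes {\rm SSym}_R[u_{v,i}]$, so the creation part of $F_\al$ is determined and matches that of $\Ga_\al(z)$. Next, using the explicit $\Z$-grading of $M$ recalled after (\ref{hom3eqn6}) together with the field property $(\xi_\al)_n w = 0$ for $n \gg 0$, a degree count forces $v_n \xi_\al = 0$ for $v \in Q$ and $n > 0$, forces $v_0 \xi_\al = 0$ for $v \in Q^-$, and identifies $v_0 \xi_\al$ with $\chi^+(\io(\al), v)\, \xi_\al$ for $v \in Q^+$ (for the last point one uses that the only vectors of $M$ annihilated by all positive Heisenberg modes are the $\xi_\ga$, $\ga \in B^+$, and then the grading to isolate the summand $e^\al \otimes {\rm SSym}_R[u_{v,i}]$). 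Hence the singular part of the operator product of $v(z)$ with $F_\al(w)$ equals that of $v(z)$ with $\Ga_\al(w)$. Since a field of $V$ is pinned down by its operator products with a generating family, creativity, skew-symmetry and its degree, $F_\al(z)$ must be of the form of $\Ga_\al(z)$ displayed above for some operators $c_\al$ commuting with every $v_n$ and fixing $|0\rangle$; a further degree count on the lowest-degree component of $F_\al(z)\xi_\be$ (and the fact that $c_\al$ commutes with Heisenberg modes) shows that $c_\al$ acts on each summand $e^\be \otimes {\rm SSym}_R[u_{v,i}]$ as a scalar $\ep_{\al,\be} \in R$, so that (\ref{hom3eqn1}) holds. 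Finally, feeding the triples of states $\xi_\al, \xi_\be, \xi_\ga$ into the associativity axiom (the $(z_1+z_2)^N$ identity) and the skew-symmetry axiom yields (\ref{hom3eqn2}), equivalently (\ref{hom3eqn3})--(\ref{hom3eqn5}); in particular the $\ep_{\al,\be}$ lie in $\{\pm 1\}$ and form a group $2$-cocycle. I expect this paragraph to be the main obstacle, and within it the degree bookkeeping needed to pin $F_\al$ down exactly in the presence of torsion in $B^+$, a possibly degenerate $\chi^+$, and Joyce's degree shift; this is the generalized-lattice analogue of the standard rigidity of vertex operators in lattice vertex algebra theory, and everything else is formal.

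It now follows that $F_\al(z) = \Ga_\al(z)$ with scaling operators $c_\al(e^\be \otimes x) = \ep_{\al,\be}\, e^\be \otimes x$ for a solution $\{\ep_{\al,\be}\}$ of (\ref{hom3eqn3})--(\ref{hom3eqn5}), and the family $\{v(z)\}_{v \in Q} \cup \{\Ga_\al(z)\}_{\al \in B^+}$ generates $(M,|0\rangle)$: the modes of the $v(z)$ produce $e^0 \otimes {\rm SSym}_R[u_{v,i}]$ from $|0\rangle$ exactly as in the ordinary Heisenberg and symplectic-fermion vertex algebras ($u_{0,v,i}$ being an invertible multiple of a mode of $v(z)$ applied to $|0\rangle$, products coming from normally ordered products of the $v(z)$), after which, for each $\al \in B^+$, the leading modes of $\Ga_\al(z)$ carry this subspace onto $e^\al \otimes {\rm SSym}_R[u_{v,i}]$. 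The generalized super-lattice vertex $R$-algebra $V_A$ attached to $(A,\chi)$, $\io$, and this same solution $\{\ep_{\al,\be}\}$ has space of states $M$, vacuum $|0\rangle$, the same translation operator $D$ (both being forced by $D|0\rangle = 0$ and $D$-covariance of the generators), and the very same generating family $\{v(z)\}_{v \in Q} \cup \{\Ga_\al(z)\}_{\al \in B^+}$ of mutually local, $D$-covariant fields. By the uniqueness part of Proposition~\ref{hom3prop4}, the two graded vertex $R$-algebra structures on $M$ agree, so $V$ is the generalized super-lattice vertex $R$-algebra associated to $(A,\chi)$ and $\io$ for this solution; since all such choices give isomorphic vertex algebras (as recalled after (\ref{hom3eqn5})), this proves the proposition.
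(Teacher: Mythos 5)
Your proposal is correct and follows essentially the same route as the paper: both arguments reduce the statement to the uniqueness clause of the Reconstruction Theorem (Proposition \ref{hom3prop4}) after showing that the hypothesis (\ref{hom3eqn8}) forces the state-to-field correspondence to send $e^\al\otimes 1$ to a vertex operator of the form $\Ga_\al(z)$ with operators $c_\al$ satisfying (\ref{hom3eqn1})--(\ref{hom3eqn2}). The only difference is one of exposition: the paper delegates that identification to Kac (pp.~102--103), whereas you spell it out; note also that the values $v_n\xi_\al$ you obtain by degree counting are already given directly by (\ref{hom3eqn8}), since the modes $v(z)_n=\rho(v\otimes t^n)$ are explicitly defined operators on the state space.
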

\begin{proof}
The proof is nearly identical to that of \cite[Thm.~5.4]{Kac}. First, the collection
$\{v(z), \Ga_\al(z)\}_{v \in Q, \al \in B^+}$ is a set of mutually local generating fields. Moreover, the derivation $T = \partial_t$ makes the collection $\{v(z), \Ga_\al(z)\}_{v \in Q, \al \in B^+}$ into a $T$-covariant collection.  Also (\ref{hom3eqn8}) implies the state-to-field correspondence maps $e^\al \otimes 1 \otimes 1 \mapsto \Ga_\al(z)$ (see \cite[p.~102-3]{Kac}). By Proposition \ref{hom3prop4} (\ref{hom3eqn8}), determines a graded vertex $R$-algebra structure on $R[B^+] \otimes {\rm SSym}_R[u_{v,i} : v \in Q, i \geq 1]$ unique up to isomorphism.

To show that the super-lattice vertex algebra $V_A \cong R[B^+] \otimes {\rm SSym}_R[u_{v,i} : v \in Q, i \geq 1]$ satisfies (\ref{hom3eqn8}), suppose that $v \in Q^+$. Then, as $e^0 \otimes (v \otimes t^{-1}) \otimes 1$ is sent to $u_{0,v,1}$ under (\ref{hom3eqn8}), we have
\begin{align*}
    (u_{0,v,1})_n & = (e^0 \otimes (v \otimes t^{-1}) \otimes 1)_n \\
                  & = {\rm Coeff}_{-n-1} Y(e^0 \otimes (v \otimes t^{-1}) \otimes 1,z) \\
                  & = {\rm Coeff}_{-n-1} Y_+(e^0 \otimes (v \otimes t^{-1}),z) \otimes Y_-(1,z) \\
                  & = {\rm Coeff}_{-n-1}(\sum_{n \in \Z} \rho^+(v \otimes t^n) \otimes {\rm Id} z^{-n-1}) \\
                  & = v(z)_n.
\end{align*}
The $v \in Q^-$ case is similar.
\end{proof}

\section{Homology calculations}

In this section we define the Betti homology of higher $\C$-stacks and review Joyce's vertex algebra construction. We then explicitly compute the rational homology of the moduli stack of objects in the derived category of a smooth complex projective variety $X$.

Note that moduli stacks of perfect complexes of coherent sheaves on varieties are not representable by schemes or stacks, but by higher stacks. For the general theory of higher stacks the reader is referred to \cite{Lur1}, \cite{Prid}, \cite{Sim2}, \cite{To}, or \cite{ToVe}. The construction of moduli stacks of objects in saturated $\C$-dg-categories is due to To\"en--Vaqui\'e \cite{ToVa}.\footnote{We will tacitly be assuming that all stacks are locally of finite type. Importantly, the moduli stack of objects in the $\C$-dg-category Perf$(X)$ is locally of finite type \cite[Thm.~0.2]{ToVa}.}We will first need to say exactly what is meant by the homology of a higher $\C$-stack. 

In \cite{Joy2} there is a list of axioms that a functor $H_\ast(-) : Ho({\rm HArt}) \to R\text{-mod}$ must satisfy in order to endow $H_\ast(\mathcal{M}_\mathcal{A})$ with the structure of a graded vertex algebra over $R$ (provided certain assumptions $\mathcal{A}$ hold). These axioms include the existence of Chern classes, $\mathbb{A}^1$-homotopy invariance, and pushforwards along stack maps that are not necessarily proper. In particular, the algebraic K-homology or the Borel--Moore homology of stacks will not satisfy these axioms. Another axiom is that the (co)homology of a derived Artin $\mathbb{C}$-stack $\mathcal{X}$ is the same as the (co)homology of its un-derived truncation $t_0(\mathcal{X}) \in \text{Ob}(\text{HArt}_\mathbb{C})$. Therefore there is no real distinction between (co)homology theories of derived Artin stacks and (co)homology theories of higher Artin stacks. A {\it (co)homology theory of higher $\C$-stacks with coefficients in $R$} is a collection of covariant functors $H_i(-) : Ho({\rm HArt}_\C) \ra R{\rm -mod}$ and contravariant functors $H^i(-) : Ho({\rm HArt}_\C) \ra R{\rm -mod}$ satisfying \cite[3.2.1]{Joy2}. If these functors satisfy every axioms except the one requiring $H^\ast({\rm Spec}(\C)) \cong H^\ast_{\rm sing}(\ast)$ then $H_i(-),H^i(-) : Ho({\rm HArt}_\C) \ra R{\rm -mod}$ is said to be a {\it generalized} (co)homology theory.

One can use simplicial presheaves to model the homotopy theory of higher $\mathbb{C}$-stacks. Given a finite type affine $\mathbb{C}$-scheme $U$, there is a simplicial set $U^\text{an}$ which is the singular complex of the underlying complex analytic space of $U$. Taking the left Kan extension along the simplicial Yoneda embedding yields a functor $(-)^{\rm Betti} : sPr(\text{Aff}_\mathbb{C}) \to \text{sSet}$. 
This functor is called the {\it Betti realization} of a simplicial presheaf. The Betti realization was first defined by Simpson \cite{Sim1}.

\begin{ex}
\label{hom4ex1}
Let $G$ be an algebraic group acting on a scheme $X$. Then 
    $$[X/G]^{\rm Betti} \simeq (EG^{\rm an} \times X^{\rm an})/G^{\rm an}$$
\cite[\S 8]{Sim1}.
\end{ex}

\begin{dfn}
\label{hom4dfn2}
Let $\mathcal{X}$ be a higher Artin $\mathbb{C}$-stack. The \emph{Betti cohomology of $\mathcal{X}$ with coefficients in $R$} is defined to be
        $$H^\ast(\mathcal{X},R) := H^\ast(\mathcal{X}^{\rm Betti},R),$$
\noindent where $H : Ho(\text{sSet}) \to Ho(R\text{-mod})$ denotes the $R$-coefficient singular cohomology of topological spaces. The Betti homology of $\mathcal{X}$ with coefficients in $R$ is defined similarly.
\end{dfn}      

We now recall the definition of the class of stacks whose cohomology can be given the structure of a graded vertex algebra\footnote{Joyce also defines vertex algebras on the homology of moduli stacks of objects in both abelian categories and triangulated categories. We work only with the triangulated category version of this theory.}.

\begin{dfn}
\label{hom4dfn3}
Let $\mathcal{A}$ be a saturated\footnote{A dg-category is {\it saturated} if it is smooth, proper, and triangulated \cite[Def.~2.4]{ToVa}. If $X$ is a smooth and proper variety, then Perf$(X)$ is saturated.} $\mathbb{C}$-linear dg-category and let $\mathcal{M}$ denote the moduli stack of objects in $\mathcal{A}$. Define stack morphisms $\Psi_\alpha : [\ast/\mathbb{G}_m] \times \mathcal{M}_\alpha \to \mathcal{M}_\alpha$ by $(\ast,E) \mapsto E$ on $\mathbb{C}$-points and by $(t,f) \mapsto t\cdot \text{Id}_{\mathcal{M}} \circ f$ on isotropy groups. Let $\Phi_{\alpha,\beta}: \mathcal{M}_\alpha \times \mathcal{M}_\beta \to \mathcal{M}_{\alpha + \beta}$ denote the stack morphism induced by the direct sum of objects in $\mathcal{A}$. Then a \emph{singular ring structure on $\M$} is the data of
\begin{itemize}
	 \item[1.] a quotient $K(\mathcal{A})$ of the Grothendieck group $K_0(\mathcal{A})$ of $\mathcal{A}$,
	 \item[2.] a symmetric bi-additive form $\chi : K(\mathcal{A}) \times K(\mathcal{A}) \to \mathbb{Z}$,
    \item[3.] signs $\epsilon_{\alpha,\beta} \in \{\pm 1\}$ for every $\alpha,\beta \in K(\mathcal{A})$, and 
    \item[4.] a perfect complex $\Theta^\bullet \in \text{Perf}(\mathcal{M} \times \mathcal{M})$ 
\end{itemize}
such that
\begin{itemize}
    \item[a.] $\mathcal{M}(\mathbb{C}) \to K(\mathcal
     A)$, $E \mapsto [E]$ is locally constant, giving a decomposition $\mathcal{M} = \coprod_{\alpha \in K(\mathcal{A})} \mathcal{M}_\alpha$ of open and closed sub-stacks $\mathcal{M}_\alpha \subset \mathcal{M}$ of objects of class $\alpha$,
    \item[b.] the restriction $\Theta_{\alpha,\beta} := \Theta|_{\mathcal{M}_\alpha \times \mathcal{M}_\beta}$ has rank $\chi(\alpha,\beta)$,
    \item[c.] for all $\alpha, \beta, \gamma \in K(\mathcal{A})$ 
        \begin{align*}
            \epsilon_{\alpha,\beta} \cdot \epsilon_{\beta,\alpha} & = (-1)^{\chi(\alpha,\beta) + \chi(\alpha,\alpha)\chi(\beta,\beta)} \\
            \epsilon_{\alpha,\beta}\cdot \epsilon_{\alpha+\beta,\gamma} & = \epsilon_{\alpha,\beta+\gamma} \cdot \epsilon_{\beta,\gamma} \\
            \epsilon_{\alpha,0} & = \epsilon_{0,\alpha} = 1,
        \end{align*}
     \item[d.] let $\sigma_{\alpha,\beta} : \mathcal{M}_\alpha \times \mathcal{M}_\beta \to \mathcal{M}_\beta \times \mathcal{M}_\alpha$ denote exchange of factors, then
     	$$\sigma^\ast_{\alpha,\beta} (\Theta^\bullet_{\alpha,\beta}) \cong (\Theta^\bullet_{\alpha,\beta})^\vee[2n],$$
\noindent for some $n \in \mathbb{Z}$ ,       
        \item[e.] for all $\alpha,\beta,\gamma \in K(\mathcal{A})$ there are isomorphisms
        	\begin{align*}(\Phi_{\alpha,\beta} \times \text{Id}_{\mathcal{M}_\gamma})^\ast (\Theta^\bullet_{\alpha+\beta,\gamma}) & \cong \pi^\ast_{\mathcal{M}_\alpha \times \mathcal{M}_\gamma}(\Theta^\bullet_{\alpha,\gamma}) \oplus \pi^\ast_{\mathcal{M}_\beta \times \mathcal{M}_\gamma} (\Theta^\bullet_{\beta,\gamma}) \\
        	(\text{Id}_{\mathcal{M}_\alpha} \times \Phi_{\beta,\gamma})^\ast (\Theta^\bullet_{\alpha,\beta+\gamma}) & \cong \pi^\ast_{\mathcal{M}_\alpha \times \mathcal{M}_\beta}(\Theta^\bullet_{\alpha,\beta}) \oplus \pi^\ast_{\mathcal{M}_\alpha \times \mathcal{M}_\gamma} (\Theta^\bullet_{\alpha,\gamma}) \\
        	(\Psi_\alpha \times \text{Id}_{\mathcal{M}_\beta})^\ast(\Theta^\bullet_{\alpha,\beta}) & \cong \pi^\ast_{[\ast/\mathbb{G}_m]} (E_1) \otimes \pi^\ast_{\mathcal{M}_\alpha,\mathcal{M}_\beta} (\Theta^\bullet_{\alpha,\beta}) \\
            	(\pi_{\mathcal{M}_\alpha}, \Psi_\beta \circ \pi_{[\ast/\mathbb{G}_m \times \mathcal{M}_\beta]})^\ast (\Theta^\bullet_{\alpha,\beta}) & \cong \pi^\ast_{[\ast/\mathbb{G}_m]} (E_{-1}) \otimes \pi^\ast_{\mathcal{M}_\alpha,\mathcal{M}_\beta} (\Theta^\bullet_{\alpha,\beta}).   
 	\end{align*}
\end{itemize} 
\end{dfn}

\noindent By abuse of language we say that a moduli stack $\M$ {\it is} a singular ring when we have in mind a particular singular ring $(\M, K(\mathcal{A}), \Th, \chi, \{\ep_{\al,\be\}_{\al,\be  \in K(\mathcal{A})}})$. The reason for the term ``singular ring" is explained in the introduction.

\begin{thm}[{see \cite[Thm.~4.11]{Joy2}}]
\label{hom4thm4}
Let  $(\M, K(\mathcal{A}), \Th, \chi, \{\ep_{\al,\be\}_{\al,\be  \in K(\mathcal{A})}})$ be a singular ring. Define a new grading by $$\hat{H}_i(\mathcal{M}_\alpha) := \hat{H}_{i-\chi(\alpha,\alpha)}(\mathcal{M}_\alpha).$$ \noindent Then $\hat{H}_\ast(\mathcal{M})$ is a graded vertex algebra over $R$. 
\end{thm}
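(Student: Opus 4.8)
The plan is to construct, directly from the singular‑ring data $(K(\A),\chi,\{\ep_{\al,\be}\},\Th^\bu)$, the four pieces of vertex‑algebra structure required by Definition \ref{hom3dfn1}, and then to verify the vacuum, skew‑symmetry and locality axioms by reducing each to a geometric identity among the structure maps $\Phi_{\al,\be}$ and $\Psi_\al$ together with the matching compatibility among (a)--(e) of Definition \ref{hom4dfn3}. First I would fix the data. The vacuum $|0\rangle$ is the canonical generator of $\hat H_0(\M_0)$, using that $\M_0$ is the component of the zero object and that $\chi(0,0)=0$. The translation operator comes from the $\G_m$‑scaling action: writing $H_\ast(B\G_m,R)\cong\bigoplus_{k\geq0}R\,\tau_k$ with $\tau_k\in H_{2k}$, coassociativity of the action forces $(\Psi_\al)_\ast(\tau_k\boxtimes v)=D^k v/k!$ for the degree‑$2$ operator $Dv:=(\Psi_\al)_\ast(\tau_1\boxtimes v)$, so that $e^{zD}v=\sum_{k\geq0}z^k(\Psi_\al)_\ast(\tau_k\boxtimes v)$. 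Finally, for $u\in\hat H_\ast(\M_\al)$, $v\in\hat H_\ast(\M_\be)$ the state‑to‑field map is
$$Y(u,z)v \;=\; \ep_{\al,\be}\,(\Phi_{\al,\be})_\ast\Big(\big((e^{zD}u)\boxtimes v\big)\cap c_z(\Th^\bu_{\al,\be})\Big),$$
where $c_z(\Th^\bu_{\al,\be})$ is the $z$‑graded total Chern class of $\Th^\bu_{\al,\be}$ normalised with leading term $z^{\chi(\al,\be)}$. Condition (b) (that $\operatorname{rank}\Th^\bu_{\al,\be}=\chi(\al,\be)$) is exactly what makes $Y(u,z)$ a field of the correct degree once the grading is shifted by $\chi(\al,\al)$; the non‑properness of $\Phi_{\al,\be}$ is absorbed by the non‑proper pushforward of the homology axioms, and only finitely many positive $z$‑powers survive because Chern classes vanish above the rank.

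The vacuum axioms come first. For $Y(|0\rangle,z)v=v$ I would use that $\Phi_{0,\be}$ identifies $\M_0\times\M_\be$ with $\M_\be$ up to the $B\G_m$ factor, that $\Th^\bu_{0,\be}$ has rank $\chi(0,\be)=0$ and is trivial, that $\ep_{0,\be}=1$ by (c), and that $e^{zD}|0\rangle=|0\rangle$ since $D|0\rangle=0$; all $z$‑dependence and Chern insertions then collapse. Symmetrically, $Y(u,z)|0\rangle=e^{zD}u$ follows from $\Phi_{\al,0}$ together with the triviality of $\Th^\bu_{\al,0}$, which leaves only the $e^{zD}$ factor.

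Skew‑symmetry $Y(u,z)v=(-1)^{\deg u\,\deg v}e^{zD}Y(v,-z)u$ I would prove by exchanging the two factors. The key input is condition (d), $\si_{\al,\be}^\ast\Th^\bu_{\al,\be}\cong(\Th^\bu_{\al,\be})^\vee[2n]$: since $c_i(\Th^\vee)=(-1)^i c_i(\Th)$ and the even shift $[2n]$ is harmless in $K$‑theory, dualising converts $c_z(\Th)$ into $c_{-z}(\Th)$, which is the source of the substitution $z\mapsto -z$. Combining this with the commutativity $\Phi_{\al,\be}=\Phi_{\be,\al}\circ\si_{\al,\be}$ inherited from $\oplus$, the Koszul sign incurred in transposing $u\boxtimes v$, and the relation $\ep_{\al,\be}\ep_{\be,\al}=(-1)^{\chi(\al,\be)+\chi(\al,\al)\chi(\be,\be)}$ from (c), all of the signs assemble — in the grading shifted by $\chi(\al,\al)$, where $\deg_{\hat H}u=\deg_H u+\chi(\al,\al)$ — into exactly $(-1)^{\deg u\,\deg v}$; moving $e^{zD}$ to the outside accounts for the residual translation.

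The final and hardest step is locality (weak commutativity/associativity). Here I would expand both
$$Y(Y(u,z_1)v,z_2)w \qquad\text{and}\qquad Y(u,z_1+z_2)\circ Y(v,z_2)w$$
as pushforwards along the triple‑sum map $\M_\al\times\M_\be\times\M_\ga\to\M_{\al+\be+\ga}$. Associativity of $\Phi$ (again from $\oplus$) makes the two underlying pushforwards agree, and the first two isomorphisms of (e) identify $(\Phi\times\operatorname{Id})^\ast\Th^\bu_{\al+\be,\ga}$ and $(\operatorname{Id}\times\Phi)^\ast\Th^\bu_{\al,\be+\ga}$ with the direct sums $\Th^\bu_{\al,\ga}\oplus\Th^\bu_{\be,\ga}$ and $\Th^\bu_{\al,\be}\oplus\Th^\bu_{\al,\ga}$, so both sides cap against the same product of Chern classes; the last two isomorphisms of (e), carrying the $\G_m$‑weight lines $E_{\pm1}$, are what turn $e^{z_1D}$ on the inner operator into the shift $z_1\mapsto z_1+z_2$ on the outer one. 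The genuine obstacle is that the two sides are expansions of one and the same Chern‑class rational function in different regions — $|z_2|>|z_1|$ versus $|z_1+z_2|>|z_2|$ — so they agree only after multiplication by $(z_1+z_2)^N$ for $N\gg0$; the crux is to check that a single $N$, bounded by the ranks (hence by $\chi$), simultaneously clears every pole and that the two expansions then literally coincide. Having verified the vacuum axioms, skew‑symmetry and this locality identity, the data $(\hat H_\ast(\M),|0\rangle,Y,e^{zD})$ satisfy Definition \ref{hom3dfn1}, which completes the proof.
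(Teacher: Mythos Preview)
The paper does not supply its own proof of this theorem: it is stated with the attribution ``see \cite[Thm.~4.11]{Joy2}'' and the text that follows merely records the explicit formula (\ref{hom4eqn1}) for the state-to-field correspondence, without verifying any of the axioms. There is therefore nothing in the present paper to compare your argument against; the proof lives entirely in Joyce's preprint.

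That said, your outline is the expected strategy and matches how the structure is set up here: vacuum from triviality of $\Th^\bu$ over the zero component together with $\ep_{0,\be}=1$; skew-symmetry from condition (d) $\si^\ast\Th^\bu\cong(\Th^\bu)^\vee[2n]$ combined with the sign relations in (c); locality/associativity from the additivity isomorphisms in (e) and associativity of $\Phi$, with the $E_{\pm1}$ identities handling the interaction with $e^{zD}$. Two small technical points are worth flagging. First, your state-to-field formula omits the sign $(-1)^{a\chi(\be,\be)}$ that appears in (\ref{hom4eqn1}); this sign is not cosmetic --- it is precisely what converts between the unshifted and $\chi$-shifted gradings, and without it the \emph{graded} skew-symmetry identity does not close up (you allude to this in your discussion of signs but it must be built into $Y$ from the start). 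Second, the field condition does not hold because ``Chern classes vanish above the rank'': $\Th^\bu$ is a perfect complex, not a vector bundle, and $c_i(\Th^\bu)$ can be nonzero for all $i$. The correct reason is that capping a fixed homology class with $c_i\in H^{2i}$ lowers degree by $2i$, so only finitely many terms survive when applied to any given input.
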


This graded vertex algebra structure is written out explicitly as follows. The stack $[\ast / \mathbb{G}_m]$ acts on $\M$ by scaling stabilizers. Taking rational homology gives an operator $\mathcal{D}(z) : H_\ast(\M) \ra H_\ast(\M)[[z]]$.

Then the formula
\begin{equation}
\label{hom4eqn1}
\begin{split}
    Y(v,z)w = \epsilon_{\alpha,\beta} (-1)^{a \chi(\beta,\beta)} \Phi_\ast(\mathcal{D}(z) \boxtimes {\rm id})  [v \boxtimes w \cap  z^{\chi(\alpha,\beta)} \sum_{i \geq 1} z^{-i} c_i(\Theta^\bullet_{\alpha,\beta})].
\end{split}
\end{equation}
defines a state-to-field correspondence. Over a $\Q$-algebra, (\ref{hom4eqn1}) can be rewritten as
\begin{equation}
\label{hom4eqn2}
\begin{split}
    Y(v,z)w = \epsilon_{\alpha,\beta} (-1)^{a \chi(\beta,\beta)} \Phi_\ast(\mathcal{D}(z) \boxtimes {\rm id})  [v \boxtimes w \cap {\rm exp}(\sum_{i \geq 1} (-1)^{i-1}(i-1)!z^{-j} \\ \ch_i(\Th^\bu_{\al,\be}))]].
\end{split}
\end{equation}

We now compute the rational Betti homology of the higher $\C$-stack $\M$ of objects in ${\rm Perf}(X)$, where $X$ is a smooth complex projective variety in class D.

Note that the Betti realization of $\M$ is an H-space. The Milnor--Moore theorem therefore implies that, if the identity component $\M_0$ is finite type, the Hopf algebra $H_\ast(\M_0,\Q)$ is a free commutative-graded algebra on its primitive elements. This essentially reduces the problem to identifying the primitive elements of $H_\ast(\M_0, \Q)$ more explicitly. To this end, we begin with a model $\tilde{{\bf K}}^{\rm sst}({\rm Perf}(X))$, due to Blanc \cite{Bl}, of the homotopy type of the connective spectrum determined by the group-like $E_\infty$-space $\M^{\rm Betti}$. Antieu--Heller identify the homotopy type of Blanc's model with the connective semi-topological K-theory spectrum $K^{\rm sst}(X)$ of $X$ itself \cite{AnHe}. This implies that the $0^{\rm th}$ space of the spectrum $K^{\rm sst}(X)$ is equivalent, as an infinite loop space, to $\M^{\rm Betti}$. The reader will notice that we care only about the H-space structure on $\Om^\infty K^{\rm sst}(X)$, not the full infinite loop space structure.  

More explicitly, in \cite{Bl} Blanc introduces a functor $\tilde{\textbf{K}}^\text{sst} : \text{dgCat}_\C \to {\rm Sp}$ called the \emph{connective semi-topological K-theory of complex non-commutative spaces} \cite[Def.~4.1]{Bl}. This functor is defined as follows: for a $\C$-dg-category $\A$ there is a spectral presheaf $\tilde{\textbf{K}}(\A) : {\rm Aff}_\C^{\rm op} \ra {\rm Sp}$ such that $\tilde{\textbf{K}}(\A)({\rm Spec}(B)) \simeq \tilde{K}(\A \otimes^\mathbb{L}_\C B)$, where $\tilde{K}(\A \otimes^\mathbb{L}_\C B)$ denotes the connective K-theory spectrum of the $\C$-dg-category $\A \otimes^\mathbb{L}_\C B$. The \emph{connective semi-topological K-theory} of $\A$ is then defined to be the spectral realization (see Blanc \cite[\S.~3.4]{Bl}) of $\tilde{K}(\A)$.
  
\begin{prop}[{see  Antieu--Heller \cite[Thm.~2.3]{AnHe}, Blanc \cite[Thm.~4.21]{Bl}}]
\label{BAH}
Let $\mathcal{A}$ be a saturated $\mathbb{C}$-linear dg-category and let $\mathcal{M}_\mathcal{A}$ denote the moduli stack of objects in $\mathcal{A}.$ Then there is an equivalence
	$$\Om^\infty \tilde{{\bf K}}^\text{sst}(\mathcal{A}) \simeq \mathcal{M}_\mathcal{A}^{\rm Betti}$$
\noindent of infinite loop spaces which is canonical up to homotopy. When $\mathcal{A} = {\rm Perf}(X)$ there is a further equivalence 
 $$\Om^\infty \tilde{\textbf{K}}^\text{sst}(\text{Perf}(X)) \simeq \Om^\infty K^\text{sst}(X).$$
In particular, the set of connected components of $\M^{\rm Betti}_{{\rm Perf}(X)}$ is given by $K_\text{sst}^0(X).$ 
\end{prop}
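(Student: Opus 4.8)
The plan is to obtain the proposition by assembling the two cited results --- the identification of \cite[Thm.~2.3]{AnHe} for the first equivalence and Blanc's comparison theorem \cite[Thm.~4.21]{Bl} for the second --- so the real task is to recall why each applies and to isolate the single non-formal point, namely that $\M_\A^{\rm Betti}$ is \emph{already} group-like.

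First I would relate Blanc's spectral presheaf to the moduli stack. By construction the value of $\tilde{\mathbf{K}}(\A)$ at ${\rm Spec}(B)$ is the connective K-theory spectrum $\tilde K(\A\otimes^{\mathbb{L}}_\C B)$, whose $0^{\rm th}$ space is the homotopy-theoretic group completion of the maximal $\infty$-groupoid of perfect $\A\otimes^{\mathbb{L}}_\C B$-modules. When $\A$ is saturated, the finiteness statements of To\"en--Vaqui\'e \cite{ToVa} identify that $\infty$-groupoid, with its $E_\infty$-structure under $\oplus$, with the $B$-points $\M_\A({\rm Spec}(B))$; hence objectwise on ${\rm Aff}_\C$ the space $\Om^\infty\tilde{\mathbf{K}}(\A)$ is the group completion of the presheaf $\M_\A$. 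I would then apply the spectral version of Simpson's Betti realization \cite{Sim1} that Blanc uses to define $\tilde{\mathbf{K}}^{\rm sst}(\A)=|\tilde{\mathbf{K}}(\A)|$: being a left adjoint it commutes with colimits, in particular with group completion, so $\Om^\infty\tilde{\mathbf{K}}^{\rm sst}(\A)$ is the group completion of $\M_\A^{\rm Betti}$.

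The crux is to show that $\M_\A^{\rm Betti}$ is already group-like, so that this group completion does nothing. Here I would use that for any object $E$ of $\A$ the cone of multiplication by the coordinate on $E\otimes_\C\C[t]\in\A\otimes^{\mathbb{L}}_\C\C[t]$ is a point of $\M_\A(\mathbb{A}^1)$ which restricts to $0$ over $\mathbb{A}^1\setminus\{0\}$ and to $E\oplus E[1]$ over the origin; since Betti realization is $\mathbb{A}^1$-invariant this gives $[E]+[E[1]]=0$ in $\pi_0(\M_\A^{\rm Betti})$. As the classes of objects generate $\pi_0$, it follows that $\pi_0(\M_\A^{\rm Betti})$ is a group, hence (by the criterion recalled in \S 2) $\M_\A^{\rm Betti}$ is a group-like $E_\infty$-space and the group completion map $\M_\A^{\rm Betti}\to(\M_\A^{\rm Betti})^{\rm grp}$ is an equivalence. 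Combining with the previous step yields $\Om^\infty\tilde{\mathbf{K}}^{\rm sst}(\A)\simeq\M_\A^{\rm Betti}$ as infinite loop spaces, which is \cite[Thm.~2.3]{AnHe}.

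For the second equivalence I would specialize to $\A={\rm Perf}(X)$. By Thomason--Trobaugh, the connective K-theory of the dg-category ${\rm Perf}(X)\otimes^{\mathbb{L}}_\C B$ agrees, naturally in $B$, with that of the scheme $X\times_{{\rm Spec}\,\C}{\rm Spec}(B)$, so realizing the presheaf $B\mapsto\tilde K({\rm Perf}(X)\otimes^{\mathbb{L}}_\C B)$ reproduces Friedlander--Walker's semi-topological K-theory spectrum of $X$ \cite{FrWa1,FrWa2}, whose $0^{\rm th}$ space is $\Om^\infty K^{\rm sst}(X)$; this is \cite[Thm.~4.21]{Bl}. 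Taking $\pi_0$ of the composite $\M^{\rm Betti}_{{\rm Perf}(X)}\simeq\Om^\infty\tilde{\mathbf{K}}^{\rm sst}({\rm Perf}(X))\simeq\Om^\infty K^{\rm sst}(X)$ and recalling $K^0_{\rm sst}(X):=\pi_0\Om^\infty K^{\rm sst}(X)$ gives the final assertion. I expect the main obstacle to be exactly the group-likeness of $\M_\A^{\rm Betti}$: a priori $\pi_0$ of the moduli stack is only the abelian monoid of quasi-isomorphism classes of objects, and it is the interaction between $\mathbb{A}^1$-invariance of Betti realization and the shift functor on the triangulated category that collapses it to the group $K^0_{\rm sst}(X)$. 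The remaining bookkeeping --- permuting Simpson's realization, Blanc's spectral realization, $\Om^\infty$, and group completion, and checking the saturatedness hypothesis in the comparison with To\"en--Vaqui\'e's construction --- is the technical content of \cite{AnHe} and \cite[\S 4]{Bl}.
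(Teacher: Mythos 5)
The paper offers no proof of this proposition: it is stated as a direct quotation of Antieu--Heller \cite[Thm.~2.3]{AnHe} and Blanc \cite[Thm.~4.21]{Bl}, and the surrounding text simply chains the two citations together. Your proposal instead reconstructs the argument behind those citations, and the reconstruction is essentially faithful to how those papers proceed. You correctly isolate the one non-formal point, namely that $\M_\A^{\rm Betti}$ is already group-like, and your mechanism for it --- the family ${\rm Cone}(t\cdot{\rm Id}_E)$ over $\mathbb{A}^1$ degenerating from $0$ to $E\oplus E[1]$, combined with $\mathbb{A}^1$-invariance of the Betti realization --- is exactly the trick this paper itself uses later in Lemma \ref{hom5lem2} to prove $\M_\al\simeq\M_0$; so your sketch has the added benefit of making visible why that lemma and this proposition are two faces of the same phenomenon. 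Two places where you are quicker than a complete argument would allow: (i) the step ``realization is a left adjoint, hence commutes with group completion and $\Om^\infty$'' needs the standard mediation that the realization is a sifted colimit, that sifted colimits of $E_\infty$-spaces are computed on underlying spaces, and that $\Om^\infty$ identifies connective spectra with group-like $E_\infty$-spaces --- $\Om^\infty$ does not commute with colimits outright; and (ii) the passage from ``realize the presheaf $B\mapsto\tilde K(X\times{\rm Spec}(B))$'' to Friedlander--Walker's $\Om^\infty K^{\rm sst}(X)$ (which is defined as the group completion of ${\rm Map}_{\rm alg}(X,\Gr)^{\rm an}$) is itself a nontrivial comparison theorem of Friedlander--Walker, not a formal consequence of Thomason--Trobaugh. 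Both points are genuinely the technical content of the cited sources, as you acknowledge, so this is a fair division of labour rather than a gap.
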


It is also worth pointing out that the semi-topological K-theory of saturated dg-categories was previously defined by Bertrand To\"en \cite{To3}. To\"en actually {\it defines} the semi-topological K-theory space of a saturated dg-category $\mathcal{A}$ to be the Betti realization the moduli stack of objects in $\mathcal{A}$ (see also Kaledin \cite[\S.~8]{Kal}).

\noindent In particular, the set of connected components of $\M$ is given by $K_\text{sst}^0(X).$ The $0^{\rm th}$ semi-topological K-group $K_{\rm sst}^0(X)$ is isomorphic to the quotient of the Grothendieck group $K^0({\rm Vect}(X))$ by the relation of algebraic equivalence \cite{FrWa1}.

\begin{lem}
\label{hom5lem2}
Let $\alpha \in K_\text{sst}^0(X)$ and let $\M_\alpha \subset \M$ denote the substack of perfect complexes of coherent sheaves on $X$ of class $\alpha$. Then there is an $\mathbb{A}^1$-homotopy equivalence $\M_\alpha \simeq \M_0.$ 
\end{lem}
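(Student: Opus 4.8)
The plan is to produce explicit, mutually inverse $\mathbb{A}^1$-homotopy equivalences $T_E : \M_0 \to \M_\alpha$ and $T_{E[1]} : \M_\alpha \to \M_0$ given by direct sum with a fixed complex and with its shift, and to contract the resulting ``error'' $E \oplus E[1]$ along a cone family over $\mathbb{A}^1$. First I would fix a representative of $\alpha$. Since $K^0_{\rm sst}(X)$ is the quotient of $K^0({\rm Vect}(X))$ by algebraic equivalence, $\alpha$ lifts to $[A]-[B]$ for algebraic vector bundles $A,B$ on $X$; then $E := A \oplus B[1]$ is a perfect complex with $[E] = \alpha$ in $K^0_{\rm sst}(X)$, so $E \in \M_\alpha(\C)$ (in particular $\M_\alpha \neq \emptyset$) and $E[1] \in \M_{-\alpha}(\C)$. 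Composing the direct-sum morphisms $\Phi_{0,\alpha} : \M_0 \times \M_\alpha \to \M_\alpha$ and $\Phi_{\alpha,-\alpha} : \M_\alpha \times \M_{-\alpha} \to \M_0$ of Definition \ref{hom4dfn3} with the inclusions of the $\C$-points $E$ and $E[1]$ gives stack morphisms $T_E : \M_0 \to \M_\alpha$, $G \mapsto G \oplus E$, and $T_{E[1]} : \M_\alpha \to \M_0$, $G \mapsto G \oplus E[1]$.

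Next I would identify the two composites. Up to the coherence $2$-isomorphisms making $\oplus$ associative and the zero object a unit, both $T_{E[1]} \circ T_E : \M_0 \to \M_0$ and $T_E \circ T_{E[1]} : \M_\alpha \to \M_\alpha$ are the translation morphisms $G \mapsto G \oplus (E \oplus E[1])$. The key observation is that $E \oplus E[1]$ is the cone of the zero endomorphism of $E$, and the cone of the scaled endomorphism $t \cdot \mathrm{id}_E$ varies algebraically with $t$: the two-term complex $\mathcal{C} := [\,\pi_X^\ast E \xrightarrow{\,t\,} \pi_X^\ast E\,]$ on $X \times \mathbb{A}^1$ (with $t$ the coordinate function on $\mathbb{A}^1$, the two terms in cohomological degrees $-1$ and $0$) is a perfect complex, hence defines a morphism $c : \mathbb{A}^1 \to \M$ to the To\"en--Vaqui\'e moduli stack \cite{ToVa}. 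Its restriction to $t=1$ is $[\pi_X^\ast E \xrightarrow{\mathrm{id}} \pi_X^\ast E]$, which is acyclic, hence the zero object; its restriction to $t=0$ is $E[1] \oplus E$. Since the class map $\M(\C) \to K^0_{\rm sst}(X)$ is locally constant and $\mathbb{A}^1$ is connected, $c$ factors through $\M_0$.

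Then $H := \Phi_{0,0} \circ (\mathrm{id}_{\M_0} \times c) : \M_0 \times \mathbb{A}^1 \to \M_0$ restricts at $t=0$ to the translation $G \mapsto G \oplus (E[1] \oplus E)$ and at $t=1$ to $G \mapsto G \oplus 0 \cong G$, and is therefore an $\mathbb{A}^1$-homotopy from $T_{E[1]} \circ T_E$ to $\mathrm{id}_{\M_0}$; similarly $\Phi_{\alpha,0} \circ (\mathrm{id}_{\M_\alpha} \times c) : \M_\alpha \times \mathbb{A}^1 \to \M_\alpha$ gives an $\mathbb{A}^1$-homotopy from $T_E \circ T_{E[1]}$ to $\mathrm{id}_{\M_\alpha}$. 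Hence $T_E$ is an $\mathbb{A}^1$-homotopy equivalence with inverse $T_{E[1]}$, proving the lemma.

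I expect the only real subtlety to be the bookkeeping around the family $c$: verifying that $[\pi_X^\ast E \xrightarrow{t} \pi_X^\ast E]$ genuinely represents a morphism $\mathbb{A}^1 \to \M$ (both terms are pullbacks of bounded complexes of vector bundles, flat over $\mathbb{A}^1$, so the cone is perfect on $X \times \mathbb{A}^1$), and that its fibre over $t=0$ is \emph{on the nose} $E[1] \oplus E$, so that the $t=0$ endpoint of $H$ is literally the translation morphism and the chain of $2$-isomorphisms closing the argument is legitimate. The remaining ingredients --- existence of the vector-bundle lift of $\alpha$, local constancy of the class, and the unit/associativity $2$-isomorphisms for $\oplus$ --- are routine.
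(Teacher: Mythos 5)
Your proposal is correct and follows essentially the same route as the paper: mutually inverse maps given by direct sum with $\mathcal{E}^\bullet$ and with $\mathcal{E}^\bullet[1]$, with the $\mathbb{A}^1$-homotopy realized by the family ${\rm Cone}(t\cdot{\rm Id}_{\mathcal{E}^\bullet})$, which is acyclic for $t\neq 0$ and equals $\mathcal{E}^\bullet\oplus\mathcal{E}^\bullet[1]$ at $t=0$. You simply make explicit some bookkeeping (the vector-bundle lift of $\alpha$ and the presentation of the cone family as a two-term complex on $X\times\mathbb{A}^1$) that the paper leaves implicit.
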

\begin{proof}
Let $\mathcal{E}^\bullet$ be a perfect complex with $\alpha = [\mathcal{E}^\bullet] \in K_\text{sst}^0(X)$. Define a morphism $\Phi_{\mathcal{E}^\bullet} : \M_0 \to \M_\alpha$ by $\mathcal{F}^\bullet \mapsto \mathcal{E}^\bullet \oplus \mathcal{F}^\bullet.$ Similarly, define a morphism $\Psi_{\mathcal{E}^\bullet} : \M_\alpha \to \M_0$ by $\mathcal{F}^\bullet \mapsto \mathcal{E}^\bullet[1] \oplus \mathcal{F}^\bullet.$ We claim that $\Psi_{\mathcal{E}^\bullet} \circ \Phi_{\mathcal{E}^\bullet} \cong \text{Id}_{M_0}.$ Define $H : \mathbb{A}^1 \times \M_0 \to \M_0$ by 
\begin{equation} 
\label{hom5eqn1}
	(t,\mathcal{F}^\bullet) \mapsto \mathcal{F}^\bullet \oplus \text{Cone}(t \cdot \text{Id}_{\mathcal{E}^\bullet}).
\end{equation}
Then, for $\mathcal{F}^\bullet \in {\rm ob}(\text{Perf}(X))$, $$H(0,\mathcal{F}^\bullet) = \mathcal{F}^\bullet \oplus \mathcal{E}^\bullet \oplus \mathcal{E}^\bullet[1] = \Psi_{\mathcal{E}^\bullet} \circ \Phi_{\mathcal{E}^\bullet} (\mathcal{F}^\bullet)$$
and, for $t \in \mathbb{A}^1 / \{0\}$,
$$H(t, \mathcal{F}^\bullet) = \mathcal{F}^\bullet = \text{Id}_{\M_0}(\mathcal{F}^\bullet).$$
Thus $\Psi_{\mathcal{E}^\bullet}$ is a left homotopy inverse of $\Phi_{\mathcal{E}^\bullet}.$ For the other direction, one may define a homotopy $H' : \mathbb{A}^1 \times \M_\alpha \to \M_\alpha$ between $\Phi_{\mathcal{E}^\bullet} \circ \Psi_{\mathcal{E}^\bullet}$ and Id$_{\M_\alpha}$ again by the formula (\ref{hom5eqn1}).
\end{proof}

\begin{thm}
\label{hom5thm4}
Suppose that the identity component $\M_0$ of $\M$ is finite type. Then there is a natural isomorphism of $\mathbb{Q}$-Hopf algebras 
\begin{equation}
\label{hom5eqn6}
    H_\ast(\M,\mathbb{Q}) \cong \mathbb{Q}[K_\text{sst}^0(X)] \otimes {\rm SSym}_\Q[\bigoplus_{i > 0} K^i_{\rm sst}(X) ].
\end{equation}
\end{thm}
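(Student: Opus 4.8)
The plan is to assemble the isomorphism from the pieces already set up in the excerpt. First I would use Proposition \ref{BAH} to identify $\M^{\rm Betti}$ with $\Om^\infty K^{\rm sst}(X)$ as an H-space, so that $H_\ast(\M,\Q) \cong H_\ast(\Om^\infty K^{\rm sst}(X),\Q)$ as $\Q$-algebras; in particular $\pi_0$ of this space is $K^0_{\rm sst}(X)$ and each component is an H-space translate of the identity component $\M_0$. The hypothesis that $\M_0$ is finite type lets me invoke the Milnor--Moore theorem (Theorem \ref{hom2thm8}) on $\M_0$, which is connected, to get $H_\ast(\M_0,\Q) \cong {\rm SSym}_\Q[\pi_\ast(\M_0)\otimes\Q]$ as commutative-graded $\Q$-Hopf algebras. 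Since $\pi_i(\M_0) = \pi_i(\Om^\infty K^{\rm sst}(X)) = K^i_{\rm sst}(X)$ for $i > 0$, the identity component contributes ${\rm SSym}_\Q[\bigoplus_{i>0} K^i_{\rm sst}(X)]$.

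Next I would account for the non-identity components. By Lemma \ref{hom5lem2} every $\M_\alpha$ is $\mathbb{A}^1$-homotopy equivalent to $\M_0$, hence $\M^{\rm Betti}_\alpha \simeq \M^{\rm Betti}_0$ after Betti realization (using that $(-)^{\rm Betti}$ sends $\mathbb{A}^1$ to a point, which is one of the axioms of the cohomology theory, or directly from Simpson's construction), so $H_\ast(\M,\Q) \cong \bigoplus_{\alpha \in K^0_{\rm sst}(X)} H_\ast(\M_0,\Q)$ as graded $\Q$-modules. To promote this to an algebra isomorphism one observes that the H-space product on $\M^{\rm Betti}$ restricts to the addition map $\M_\alpha \times \M_\beta \to \M_{\alpha+\beta}$, and that the translation operators $\Phi_{\E^\bullet}$ from Lemma \ref{hom5lem2} are (up to homotopy) the H-space translations. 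Concretely, picking for each $\alpha$ a basepoint object $\E^\bullet_\alpha$ of class $\alpha$ gives a homotopy equivalence $\coprod_\alpha \M_0 \xrightarrow{\sim} \M$ intertwining the monoid structure on $\pi_0$ with addition; passing to homology, the Pontryagin product identifies $H_\ast(\M,\Q)$ with the group-algebra-twisted tensor product $\Q[K^0_{\rm sst}(X)] \otimes H_\ast(\M_0,\Q)$, which is exactly the right-hand side of (\ref{hom5eqn6}) after substituting the Milnor--Moore description of $H_\ast(\M_0,\Q)$. Naturality in $X$ follows because every step --- Proposition \ref{BAH}, Milnor--Moore, and the translation maps --- is natural.

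The step I expect to be the main obstacle is the bookkeeping that makes the decomposition $\bigoplus_\alpha H_\ast(\M_0,\Q)$ genuinely an isomorphism of Hopf algebras (or at least of algebras) rather than merely of graded vector spaces: one must check that the Pontryagin product on $H_\ast(\M,\Q)$ sends the summand indexed by $\alpha$ times the summand indexed by $\beta$ into the summand indexed by $\alpha+\beta$ via the canonical product on $H_\ast(\M_0,\Q)$, and that this is independent (up to the stated natural isomorphism) of the choices of basepoint complexes $\E^\bullet_\alpha$ --- the latter because different choices differ by an object of class $0$, and $\M_0$ is connected so such translations act trivially on homology up to homotopy. The reference to group completion theory (Definition \ref{hom2dfn9}, that $H_\ast(\Xi,A)$ is the localization of $H_\ast$ at the $\pi_0$-action) gives the cleanest formulation: $H_\ast(\M^{\rm Betti},\Q)$ is already group-complete since $\M^{\rm Betti}$ is group-like, so it is the localization $\Q[\pi_0]\otimes_{\Q[\pi_0^{\rm monoid}]} H_\ast(\text{pre-completion})$, but here it is simpler to argue directly from connectedness of $\M_0$ and the explicit translations. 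No delicate point arises in the Milnor--Moore or homotopy-group inputs; those are quoted wholesale.
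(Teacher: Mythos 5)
Your proposal is correct and follows essentially the same route as the paper's proof: Milnor--Moore applied to the finite type connected H-space $\M_0$, combined with Lemma \ref{hom5lem2} to identify all components with $\M_0$ and the K\"unneth formula to assemble $H_\ast(\M,\Q) \cong H_\ast(\pi_0(\M) \times \M_0,\Q)$. The extra care you take with the Pontryagin product and the independence of basepoint choices is a legitimate fleshing-out of what the paper compresses into its one-line appeal to K\"unneth, not a different argument.
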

\begin{proof}
The Milnor--Moore theorem gives an
	$$H_\ast(\M_0,\Q) \cong {\rm SSym}_\Q[\bigoplus_{i \geq 1} K_{\rm sst}^i(X)]$$
of graded $\Q$-Hopf algebras. By this, Lemma \ref{hom5lem2}, and the K\"unneth formula 
\begin{equation*}
	H_\ast(\M, \Q) \cong H_\ast(\pi_0(\M) \times \M_0, \Q)
		               \cong \mathbb{Q}[K_\text{sst}^0(X)] \otimes {\rm SSym}_\Q[\bigoplus_{i \geq 1} K_{\rm sst}^i(X)].  \qedhere
\end{equation*}
\end{proof}

Computing semi-topological K-theory is, in general, not an easy task. There is however a certain class of varieties for which computing semi-topological K-theory is not hard.

\begin{dfn}
\label{hom5dfn5}
A smooth complex variety $V$ is said to be {\it in class D} if the natural map $\Om^\infty K^{\rm sst}(V) \ra \Om^\infty K^{\rm top}(V^{\rm an})$ induces an isomorphism $K_{\rm sst}^i(V) \ra K_{\rm top}^i(V^{\rm an})$ for all $i \geq 1$ and a monomorphism $K^0_{\rm sst}(V) \hookrightarrow K^0_{\rm top}(V^{\rm an})$.
\end{dfn}

\begin{rem}
\label{hom5rem6}
Our terminology is motivated by the following: Friedlander--Haesemeyer--Walker say that a variety $V$ is {\it in class C} if the refined cycle maps $L_tH_n(X) \ra \tilde{W}_{-2t} H_n^{\rm BM}(X)$ \cite[Def.~5.8]{FrHaWa} are isomorphisms for all $t$ and $n$. The condition of being in class C is much stronger than what we want. For example, a smooth surface $S$ will not be in class C unless all of $H^2(S^{\rm an})$ is algebraic. This would exclude many interesting surfaces, such as K3 surfaces. If a variety is in class C then $K^i_{\rm sst}(V) \ra K^i_{\rm top}(V^{\rm an})$ is an isomorphism for $i \geq {\rm dim}_\C V - 1$ and injective for $i = {\rm dim}_\C V - 2$; one could call this property being in class E. If so, then {\it all} surfaces are in class E \cite[Thm.~3.7]{FrHaWa} and class C $\subset$ class D $\subset$ class E. 
\end{rem}

\noindent Fortunately, many varieties are in class D. Examples of projective varieties in class D are curves, surfaces, toric varieties, flag varieties, and rational 3- and 4-folds \cite[Thm.~6.18, Prop.~6.19]{FrHaWa}. This is a non-exhaustive list. For example, some more examples of degenerate 3- and 4-folds in class D were computed recently by Voineagu \cite{Voin}. The kernel of $K^0_{\rm sst}(X) \otimes \Q \ra K^0_{\rm top}(X^{\rm an}) \otimes \Q$ is isomorphic to the rational Griffiths group \cite[Ex.~1.5]{FrWa2}. Voison proved that general Calabi--Yau 3-folds have infinitely generated rational Griffiths group so that they cannot be in class D \cite[Thm.~4]{Vois}. The following lemma justifies our choice to focus on varieties in class D.

\begin{lem} 
\label{hom5lem7}
If $X$ is in class D then for all $\al \in K_{\rm sst}^0(X)$ the K-theory comparison map induces a homotopy equivalence $\M_\al^{\rm Betti} \simeq \Map_{C^0}(X^{\rm an}, BU \times \Z)_\al$.
\end{lem}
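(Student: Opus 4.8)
The plan is to reduce the claim to Whitehead's theorem applied to the restriction of the K-theory comparison map to a single path component, after transporting everything to the identity component by the H-space structure.

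First I would use Proposition \ref{BAH} to identify $\M^{\rm Betti}$ with $\Om^\infty K^{\rm sst}(X)$ as an infinite loop space. Thus $\M^{\rm Betti}$ is a group-like H-space, its set of components is $\pi_0(\M^{\rm Betti}) \cong K^0_{\rm sst}(X)$, the substack $\M_\al$ cuts out the path component indexed by $\al$, and $\pi_i(\M_\al^{\rm Betti}) \cong K^i_{\rm sst}(X)$ for $i \geq 1$ (either by translating to the identity component, or directly from the $\mathbb{A}^1$-homotopy equivalence $\M_\al \simeq \M_0$ of Lemma \ref{hom5lem2}). Dually, $\Map_{C^0}(X^{\rm an}, BU \times \Z) = \Om^\infty K^{\rm top}(X^{\rm an})$ is a group-like H-space with $\pi_0 \cong K^0_{\rm top}(X^{\rm an})$ and the $i$-th homotopy group of each component isomorphic to $K^i_{\rm top}(X^{\rm an})$ for $i \geq 1$. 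The comparison map $c : \Om^\infty K^{\rm sst}(X) \ra \Om^\infty K^{\rm top}(X^{\rm an})$ is an H-map (indeed an infinite loop map), being the homotopy-theoretic group completion of the $E_\infty$-map $\Map_{\rm alg}(X,\Gr)^{\rm an} \ra \Map_{C^0}(X^{\rm an}, \coprod_n BU(n))$, and on $\pi_0$ it is the inclusion $\io : K^0_{\rm sst}(X) \hookrightarrow K^0_{\rm top}(X^{\rm an})$, which is injective because $X$ is in class D. Being continuous, $c$ carries path components to path components, so it restricts to a map $c_\al : \M_\al^{\rm Betti} \ra \Map_{C^0}(X^{\rm an}, BU \times \Z)_{\io(\al)}$; this is the map named in the statement.

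Next I would reduce to $\al = 0$. Choosing a point $a \in \M_\al^{\rm Betti}$ and putting $b := c(a)$, right translation by $a$ and by $b$ are homotopy equivalences from the identity components to the $\al$- and $\io(\al)$-components respectively, since translation in a group-like H-space is invertible up to homotopy (cf.\ \cite[Ch.~9]{MaPo}); because $c$ is an H-map, these translations fit into a square commuting up to homotopy with $c_0$ and $c_\al$, so $c_\al$ is, up to homotopy, the conjugate of $c_0 : \M_0^{\rm Betti} \ra \Map_{C^0}(X^{\rm an}, BU \times \Z)_0$ by homotopy equivalences. Hence it suffices to show $c_0$ is a homotopy equivalence. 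I would first check it is a weak equivalence: both $\M_0^{\rm Betti}$ and the identity component of $\Map_{C^0}(X^{\rm an}, BU \times \Z)$ are connected, so $\pi_0(c_0)$ is a bijection, and for $i \geq 1$, under the identifications above, $\pi_i(c_0)$ is exactly the natural map $K^i_{\rm sst}(X) \ra K^i_{\rm top}(X^{\rm an})$, an isomorphism by the definition of class D (Definition \ref{hom5dfn5}). Then I would upgrade to a genuine homotopy equivalence by Whitehead's theorem: $\M_0^{\rm Betti}$ is the realization of a simplicial set, hence a CW complex, and $\Map_{C^0}(X^{\rm an}, BU \times \Z)$ has the homotopy type of a CW complex because $X^{\rm an}$ is a compact complex manifold, hence a finite CW complex, and mapping spaces out of finite CW complexes into CW complexes have CW homotopy type (Milnor). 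Conjugating back shows $c_\al$ is a homotopy equivalence.

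The main obstacle I anticipate is not a computation but bookkeeping: one must check that the equivalence of Proposition \ref{BAH} respects the H-space ---indeed $E_\infty$--- structures and the decompositions into path components on both sides, so that $c$ genuinely restricts component-wise, the translation/conjugation argument is legitimate, and $\pi_i(c_0)$ for $i \geq 1$ is literally the semi-topological-to-topological comparison map used to define class D in Definition \ref{hom5dfn5}. All of this is formal given the cited work of Blanc \cite{Bl} and Antieu--Heller \cite{AnHe} together with the fact that semi-topological and topological K-theory are both built as homotopy-theoretic group completions of $E_\infty$-mapping-space functors, but these compatibilities are the points where care is needed.
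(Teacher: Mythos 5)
Your proposal is correct and follows essentially the same route as the paper's proof: identify $\M^{\rm Betti}_\al$ with $\Om^\infty K^{\rm sst}(X)_\al$ via Proposition \ref{BAH}, observe that class D makes the comparison map a weak equivalence on each component, and upgrade to a genuine homotopy equivalence via Milnor's CW-homotopy-type result for mapping spaces together with Whitehead's theorem. The only difference is that you spell out the translation-to-the-identity-component bookkeeping that the paper leaves implicit in the phrase ``by definition of class D, each connected component \dots is a weak homotopy equivalence.''
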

\begin{proof}
By definition of class D, each connected component $\Xi_\al : \Om^\infty K^{\rm sst}(X)_\al \ra  \Om^\infty K^{\rm top}(X^{\rm an})_\al$ of the natural K-theory comparison map  is a weak homotopy equivalence. Proposition 5.1 then gives a weak homotopy equivalence
	\begin{equation}
	\label{hom5eqn8}
		\M^{\rm Betti}_\al \simeq \Om^\infty K^{\rm sst}(X)_\al \stackrel{\Xi}{\longrightarrow} \Om^\infty K^{\rm top}(X^{\rm an})_\al \simeq {\rm Map}_{C^0}(X^{\rm an}, BU \times \Z)_\al.
	\end{equation}
As $X^{\rm an}$ is a compact metric space and $BU \times \Z$ has the homotopy type of a countable CW complex, $ {\rm Map}_{C^0}(X^{\rm an}, BU \times \Z)_\al$ has the homotopy type of a CW complex \cite[Cor.~2]{Mil}. Because $\M^{\rm Betti}_\al$ is the realization of a simplicial set it is a CW complex too. Whitehead's theorem then lifts the weak homotopy equivalence (\ref{hom5eqn8}) to a homotopy equivalence.
\end{proof}

\noindent The following lemma allows us to define Chern classes and Betti realizations of perfect complexes on higher $\C$-stacks.

\begin{lem}[{see Blanc \cite[Thm.~4.5]{Bl}}]
\label{hom5prop9}
Let ${\rm Perf}_\C$ denote the $\C$-stack of perfect complexes of complex vector spaces. There is an equivalence of infinite loop spaces
    $${\rm Perf}_\C^{\rm Betti} \simeq BU \times \Z.$$
\end{lem}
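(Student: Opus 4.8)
The statement to prove is Lemma \ref{hom5prop9}: the Betti realization of the $\C$-stack ${\rm Perf}_\C$ of perfect complexes of complex vector spaces is equivalent, as an infinite loop space, to $BU \times \Z$. The plan is to invoke Proposition \ref{BAH} in the special case $X = {\rm Spec}(\C)$, where ${\rm Perf}({\rm Spec}(\C)) = {\rm Perf}_\C$, and then identify the target. Indeed Proposition \ref{BAH} gives an equivalence of infinite loop spaces $\Om^\infty \tilde{\bf K}^{\rm sst}({\rm Perf}_\C) \simeq {\rm Perf}_\C^{\rm Betti}$, together with the further equivalence $\Om^\infty \tilde{\bf K}^{\rm sst}({\rm Perf}({\rm Spec}(\C))) \simeq \Om^\infty K^{\rm sst}({\rm Spec}(\C))$. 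So it remains to compute $\Om^\infty K^{\rm sst}({\rm Spec}(\C))$.

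First I would recall, from the discussion of semi-topological K-theory in Section 2, that $K^{\rm sst}({\rm Spec}(\C))$ is the homotopy-theoretic group completion of the $E_\infty$-space ${\rm Map}_{\rm alg}({\rm Spec}(\C), \Gr)^{\rm an} = \Gr^{\rm an} \simeq \coprod_{n \geq 0} BU(n)$; the excerpt states directly that its homotopy-theoretic group completion is $\coprod_{n \geq 0} BU(n) \to BU \times \Z$. Hence $\Om^\infty K^{\rm sst}({\rm Spec}(\C)) \simeq BU \times \Z$. Concatenating this with the two equivalences from Proposition \ref{BAH} yields ${\rm Perf}_\C^{\rm Betti} \simeq BU \times \Z$, and since every equivalence in the chain is one of infinite loop spaces (group completion is a map of $E_\infty$-spaces, and the Antieu--Heller/Blanc comparisons are stated at the spectrum level), the composite is an equivalence of infinite loop spaces.

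An alternative, essentially equivalent route avoids Proposition \ref{BAH}: one can appeal directly to Blanc's computation that the semi-topological K-theory spectrum of the point realizes the classical fact $\Om^\infty {\bf KU} \simeq BU \times \Z$, i.e. the Betti realization of the constant presheaf with value the K-theory space of $\C$-vector spaces is the topological K-theory space. Since the excerpt explicitly cites Blanc \cite[Thm.~4.5]{Bl} for this very statement, the cleanest writeup is to note that Lemma \ref{hom5prop9} is precisely \cite[Thm.~4.5]{Bl}, specialized as above, and to spell out why: ${\rm Perf}_\C^{\rm Betti}$ is the Betti realization of the moduli stack of objects in ${\rm Perf}({\rm Spec}\,\C)$, whose connective semi-topological K-theory (by Proposition \ref{BAH} and the $\Gr^{\rm an}$ computation) has $0$-th space $BU \times \Z$.

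The only subtle point — and the main thing to be careful about rather than a genuine obstacle — is matching the $E_\infty$/infinite-loop structures across the identifications, in particular checking that the $E_\infty$-structure on ${\rm Perf}_\C^{\rm Betti}$ coming from direct sum of complexes is the one transported through Proposition \ref{BAH}, so that the resulting map to $BU \times \Z$ is an infinite loop map and not merely a weak homotopy equivalence of underlying spaces. This is handled by the fact that all the cited equivalences (Blanc's spectral realization, Antieu--Heller's identification with $K^{\rm sst}(X)$, and the group-completion description of $\Om^\infty K^{\rm sst}$) are functorial at the level of spectra or of $E_\infty$-spaces, so the monoidal structures are automatically compatible; no additional argument beyond citing these is required.
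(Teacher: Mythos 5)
Your argument is correct, but note that the paper itself offers no proof of Lemma \ref{hom5prop9}: it is stated purely as a citation of Blanc \cite[Thm.~4.5]{Bl}, which is the content of your second, ``alternative'' route. Your first route --- specializing Proposition \ref{BAH} to $X = {\rm Spec}(\C)$, identifying the moduli stack of objects in ${\rm Perf}({\rm Spec}(\C))$ with ${\rm Perf}_\C$, and then computing $\Om^\infty K^{\rm sst}({\rm Spec}(\C))$ as the homotopy-theoretic group completion of $\Gr^{\rm an} \simeq \coprod_{n \geq 0} BU(n)$, namely $BU \times \Z$ --- is a valid derivation within the paper's logical ordering, since Proposition \ref{BAH} is stated earlier and cited externally. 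What it buys is an explicit reduction of the lemma to results already on the table, at the cost of being somewhat circular at the level of the primary sources: Blanc's computation for the point (his Thm.~4.5) is itself an ingredient in the machinery behind the general comparison in Proposition \ref{BAH}, so the derivation does not reduce the logical depth, only repackages it. Your closing remark about compatibility of the $E_\infty$-structures is the right point to flag, and your resolution (all cited equivalences are at the spectrum or $E_\infty$ level) matches how the paper implicitly treats it.
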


\begin{dfn}
\label{hom5dfn10}
Let $\X$ be a higher $\C$-stack and let $\E^\bu$ be a rank $r$ perfect complex on $\X$. Then there is a map $\phi_{\E^\bu} : \X \ra {\rm Perf}_\C^r$ classifying $\E^\bu$. By Lemma \ref{hom5lem7}, taking  cohomology then gives a homomorphism $$H^\ast(\E^\bu) : H^\ast({\rm Perf^r}_\C) \cong  H^\ast(BU) \cong \Z[[c_1,c_2,\dots]] \ra H^\ast(\X).$$ The {\it $i^{\rm th}$ Chern class of $\E^\bu$} is $c_i(\E^\bu) := H^\ast(\E^\bu)(c_i)$. 
\end{dfn}

\begin{dfn}
\label{hom5dfn11}
Let $\X, \E^\bu,$ and $ \phi_{\E^\bu}$ be as in Definition \ref{hom5dfn10}. Then functoriality of the Betti realization gives a continuous map $(\uE)^{\rm Betti} : \X^{\rm Betti} \ra {\rm Perf}_\C^{\rm Betti} \simeq BU \times \Z$ called the {\it Betti realization} of $\uE$.
\end{dfn}

\noindent The following Proposition makes it possible to compute the Chern classes of the universal complex over $X \times \M$ when $X$ is in class D.

\begin{prop}
\label{hom5prop12}
Let $X$ be in class D and $\al \in K^0_{\rm sst}(X)$. Let $\uE_\al$ be the universal perfect complex over $X \times \M_\al$ and let $\mathcal{E}_\al : X^{\rm an} \times \Map_{C^0}(X^{\rm an},BU)_\al \ra BU$ denote the evaluation map. Then there is a homotopy $$(\uE_\al)^{\rm Betti} \simeq \E_\al : X^{\rm an} \times \Map_{C^0}(X^{\rm an},BU)_\al \ra BU$$
In particular, the image of $c_i(\uE_\al)$ under the isomorphism $H^{2i}(X \times \M_\al) \cong H^{2i}(X^{\rm an} \times \Map_{C^0}(X^{\rm an},BU)_\al)$ equals $c_i([\mathcal{E}_\al])$ for all $i \geq 0$.
\end{prop}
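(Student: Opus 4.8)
The plan is to chase the universal object through the chain of identifications set up in the preceding lemmas. The key point is that the universal perfect complex $\uE_\al$ on $X \times \M_\al$ is, by definition, classified by a map $\phi_{\uE_\al} : X \times \M_\al \to \mathrm{Perf}_\C$, and that this classifying map is itself universal: it is (up to equivalence) the evaluation map for the internal mapping stack $\underline{\Map}_{\mathrm{alg}}(X, \mathrm{Perf}_\C)$, of which $\M$ is the component-wise reindexing. So $\phi_{\uE_\al}$ \emph{is} the algebraic evaluation map $\mathrm{ev} : X \times \M_\al \to \mathrm{Perf}_\C$. First I would make this precise: $\M_\al = \underline{\Map}_{\mathrm{alg}}(X,\mathrm{Perf}_\C)_\al$ as higher $\C$-stacks, and under this identification the universal complex is pulled back from $\mathrm{Perf}_\C$ along $\mathrm{ev}$.

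Next I would apply the Betti realization functor, which is a left adjoint and hence commutes with the formation of mapping objects in the relevant sense (this is where one invokes Simpson's construction, Example \ref{hom4ex1}-style computations, and the compatibility of $(-)^{\rm Betti}$ with products). Concretely: $(\uE_\al)^{\rm Betti} = (\mathrm{ev})^{\rm Betti} \circ (\text{identification})$, and $(\mathrm{ev})^{\rm Betti}$ is the evaluation map $X^{\rm an} \times \M_\al^{\rm Betti} \to \mathrm{Perf}_\C^{\rm Betti}$. Now substitute the two equivalences already established: $\mathrm{Perf}_\C^{\rm Betti} \simeq BU \times \Z$ by Lemma \ref{hom5prop9}, and $\M_\al^{\rm Betti} \simeq \Map_{C^0}(X^{\rm an}, BU \times \Z)_\al$ by Lemma \ref{hom5lem7}. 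One must check that under these equivalences the algebraic evaluation map goes over to the topological evaluation map $\E_\al$; this follows because the equivalence of Lemma \ref{hom5lem7} is built precisely from the K-theory comparison map $\Map_{\rm alg}(X,\Gr) \to \Map_{C^0}(X^{\rm an}, \coprod BU(n))$ and its group completion, which is by construction compatible with evaluation at points of $X^{\rm an}$ (one can test this on the level of globally generated vector bundles, where evaluation is literally restriction of a bundle to a point, and then pass to the group completion and to perfect complexes via Blanc's model). The final clause about Chern classes is then immediate: $c_i(\uE_\al) = H^{2i}(\uE_\al)(c_i)$ by Definition \ref{hom5dfn10}, and since $(\uE_\al)^{\rm Betti} \simeq \E_\al$ the induced maps on $H^{2i}$ agree, so $c_i(\uE_\al)$ maps to $\E_\al^\ast(c_i) = c_i([\E_\al])$ under the cohomology isomorphism.

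The main obstacle I expect is bookkeeping the compatibility of Betti realization with mapping stacks and with the group-completion step: $(-)^{\rm Betti}$ does not obviously commute with $\underline{\Map}_{\mathrm{alg}}(X,-)$ on the nose, because group completion (which is what turns $\Map_{\rm alg}(X,\Gr)^{\rm an}$ into $\Om^\infty K^{\rm sst}(X)$ and is hidden inside Proposition \ref{BAH}) is a homotopy-theoretic operation with only a weak universal property (Proposition \ref{hom2prop10}). So the homotopy $(\uE_\al)^{\rm Betti} \simeq \E_\al$ has to be produced by first writing down the comparison at the level of the un-completed $E_\infty$-spaces of globally generated bundles (where everything is strict and evaluation is honest restriction), and then invoking the weak universal property of group completion together with Whitehead's theorem, exactly as in the proof of Lemma \ref{hom5lem7}, to upgrade it to a genuine homotopy of the completed spaces. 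Everything else — the identification of $\M_\al$ with a mapping stack component, the substitution of the two known equivalences, and the passage to Chern classes — is formal.
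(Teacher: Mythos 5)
Your proposal follows essentially the same route as the paper: the universal complex is classified by the evaluation map of the mapping stack, its Betti realization is compared with the topological evaluation map by testing both against the uncompleted space $\Map_{\rm alg}(X,\Gr)^{\rm an}$ of globally generated bundles and invoking the weak universal property of homotopy-theoretic group completion, with class D and Whitehead's theorem upgrading the conclusion. The paper's one extra device, which you may want to adopt since the weak universal property only yields a \emph{weak} homotopy between the two maps $\Om^\infty K^{\rm sst}(X)\ra\Om^\infty K^{\rm top}(X^{\rm an})$, is to take the exponential adjoint $\Ga_\al$ of $(\uE_\al)^{\rm Betti}$ itself as the identification $\M_\al^{\rm Betti}\simeq\Map_{C^0}(X^{\rm an},BU)_\al$ (it is an equivalence because it is weakly homotopic to $\Xi_\al$, hence induces the same maps on homotopy groups), so that the final triangle commutes tautologically by adjunction.
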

\begin{proof}
The evaluation map $X \times {\rm Map}_{{\rm HSt}_\C}(X, {\rm Perf}_\C)_\al \ra {\rm Perf}_\C$ classifies $\uE_\al$. Taking Betti realizations gives a map
	$$(ev_\al)^{\rm Betti}: X^{\rm an} \times \M^{\rm Betti}_\al \ra BU \times \Z$$
by Lemma \ref{hom5prop9}. Exponentiating gives a continuous map $\Ga_\al : \Om^\infty K^{\rm sst}(X)_\al \ra \Om^\infty K^{\rm top}(X^{\rm an})_\al$. Write $$\Ga := \coprod_{\al \in K^0_{\rm sst}(X)} \Ga_\al : \Om^\infty K^{\rm sst}(X) \ra \Om^\infty K^{\rm top}(X^{\rm an}).$$ Note that both $\Ga$ and the natural K-theory comparison map $\Xi : \Om^\infty K^{\rm sst}(X) \ra \Om^\infty K^{\rm top}(X^{\rm an})$ make the homotopy-theoretic group completion diagram 
\begin{center}
    \begin{tikzcd}
    	{\rm Map}_{\rm alg}(X, \Gr)^{\rm an}  \arrow{r} \arrow{d}  & {\rm Map}_{C^0}(X^{\rm an}, \coprod_{n \geq 0} BU(n)) \arrow{d} \\
	\Om^\infty K^{\rm sst}(X) \arrow[swap]{r}{\Ga, \Xi} & \Om^\infty K^{\rm top}(X^{\rm an})
    \end{tikzcd}
\end{center}
homotopy commute. By the weak universal property of homotopy theoretic group completions, $\Ga$ is weakly homotopic to $\Xi$ \cite[Prop.~1.2]{CCMT}. Therefore, the restrictions of $\Ga,\Xi$ along any map $S^n \ra \Om^\infty K^{\rm sst}(X)$ are homotopic. In particular, $\Ga$ and $\Xi$ induce the same maps on homotopy groups. Because $X$ is in class $D$, $\Xi_\al$ is a homotopy equivalence for all $\al \in K^0_{\rm sst}(X)$. This now implies that $\Ga_\al$ is a homotopy equivalence for all  $\al \in K^0_{\rm sst}(X)$. Because $\Ga_\al$ makes the diagram
\begin{center}
	\begin{tikzcd}
		X^{\rm an} \times \M_\al^{\rm Betti} \arrow{r}{(\uE_\al)^{\rm Betti}} \arrow[swap]{d}{1_{X^{\rm an} \times \Ga_\al}} & BU \\
		X^{\rm an} \times {\rm Map}_{C^0}(X^{\rm an}, BU)_\al \arrow[swap]{ur}{\E_\al}
	\end{tikzcd}
\end{center}
homotopy commute, $(\uE_\al)^{\rm Betti}$ is homotopic to $\E_\al$ for all $\al \in K^0_{\rm sst}(X)$.
\end{proof}

\begin{thm}
\label{hom5thm13}
Let $X$ be in class D and let $\al \in K_{\rm sst}^0(X)$. Then $H^\ast(\M_\al,\Q)$ is freely generated as a commutative-graded $\Q$-algebra by the K\"unneth components of Chern classes of the universal complex $\uE_\al$ over $X \times \M_\al$.
\end{thm}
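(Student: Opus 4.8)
The plan is to reduce the computation of $H^\ast(\M_\al,\Q)$ to the computation of the rational cohomology of a connected component of the mapping space $\Map_{C^0}(X^{\rm an}, BU\times\Z)$, and then to apply the Brown--Szczarba--Haefliger model of Proposition \ref{hom2prop13}. By Lemma \ref{hom5lem7}, the K-theory comparison map gives a homotopy equivalence $\M_\al^{\rm Betti}\simeq \Map_{C^0}(X^{\rm an}, BU\times\Z)_\al$, and by Lemma \ref{hom5lem2} (or directly) this component is homotopy equivalent to the identity component $\Map_{C^0}(X^{\rm an}, BU)_0$, which is simply-connected and of finite type since $X^{\rm an}$ is a finite CW complex and $BU$ is simply-connected of finite type. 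So it suffices to identify a model for this component and check that its generators are the K\"unneth components of the Chern classes of the evaluation map; Proposition \ref{hom5prop12} then transports these to the K\"unneth components of $c_i(\uE_\al)$ under the induced isomorphism on cohomology.

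The key steps, in order, are as follows. First, take as a finite-dimensional model $(A,d_A)$ for $X^{\rm an}$ the (formal, since $X$ is a smooth projective, hence K\"ahler, variety) rational cohomology algebra $(H^\ast(X^{\rm an},\Q),0)$; being finite-dimensional and formal, it satisfies the hypotheses of Proposition \ref{hom2prop13}. Second, take the minimal model of $BU$: since $BU$ is an H-space of finite type, by Proposition \ref{hom2prop11} it is $({\rm SSym}_\Q[\pi^\ast(BU)],0)$ with zero differential, where $\pi^\ast(BU)\otimes\Q$ is a polynomial generator in each even degree $2,4,6,\dots$, dual to the Chern classes $c_1,c_2,\dots$. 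Third, feed these into Proposition \ref{hom2prop13}: the model for $\Map_{C^0}(X^{\rm an},BU)_0$ is ${\rm SSym}_\Q[W]$ where $W$ is the quotient of $A_\ast\otimes\pi^\ast(BU)$ by the ideal generated by negative-degree elements and their differentials. Since $A$ is concentrated in even non-negative degrees and $\pi^\ast(BU)$ in even negative degrees, one checks $A_\ast\otimes\pi^\ast(BU)$ is already concentrated in suitable degrees so that (after discarding the parts of negative total degree) $W$ is simply spanned by the tensors $v_j^\vee\otimes c_i^\vee$ with $\deg(v_j^\vee\otimes c_i^\vee)=2i-\deg(v_j)\ge 0$; moreover the differential induced on ${\rm SSym}_\Q[W]$ vanishes because both $d_A=0$ and $d_{BU}=0$, so the space is formal and $H^\ast(\M_\al,\Q)\cong{\rm SSym}_\Q[W]$ is a free commutative-graded algebra. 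Fourth, identify the generators: under the $\Q$-cdga morphism $\ep:{\rm SSym}_\Q[\pi^\ast(BU)]\to A\otimes{\rm SSym}_\Q[W]$ modelling the evaluation map, the generator $c_i^\vee$ maps to $\sum_j v_j\otimes(v_j^\vee\otimes c_i^\vee)$, which is precisely the statement that the dual generators $v_j^\vee\otimes c_i^\vee$ are the K\"unneth components of $c_i$ of the evaluation map $\E_\al$ with respect to the basis $\{v_j\}$ of $H^\ast(X^{\rm an},\Q)$. Fifth, invoke Proposition \ref{hom5prop12} to conclude that under the isomorphism $H^\ast(X\times\M_\al,\Q)\cong H^\ast(X^{\rm an}\times\Map_{C^0}(X^{\rm an},BU)_\al,\Q)$ the class $c_i(\uE_\al)$ goes to $c_i(\E_\al)$, so its K\"unneth components generate $H^\ast(\M_\al,\Q)$ freely.

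The main obstacle I expect is the bookkeeping in the third step: verifying that the ideal $I$ of "negative-degree elements and their differentials" is exactly what one hopes, i.e. that the surviving quotient $W$ contributes no unexpected relations and that the induced differential on ${\rm SSym}_\Q[W]$ genuinely vanishes. This requires care because the tensor product $A_\ast\otimes\pi^\ast(BU)$ does have elements of negative degree (coming from pairing a low-degree cohomology class of $X$ with a high Chern degree is fine, but pairing a high-degree class of $X$ with $c_1$ gives negative degree), and one must check that $\ep'$ sends each polynomial generator of ${\rm SSym}_\Q[\pi^\ast(BU)]$ into the subalgebra generated by non-negative-degree tensors so that the construction descends cleanly. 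Since $A$ is formal with zero differential and $BU$ is formal with zero differential, the differential $d$ on ${\rm SSym}_\Q[A_\ast\otimes\pi^\ast(BU)]$ forced by requiring $\ep'$ to be a cdga map is itself zero, which makes the descent to $W$ transparent; nonetheless spelling this out—together with the degree convention $z$ has degree $-2$ versus the cohomological grading—is the one place where one has to be genuinely careful rather than formal.
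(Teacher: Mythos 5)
Your proposal is correct and follows essentially the same route as the paper's proof: reduce via Proposition \ref{hom5prop12} to the component of $\Map_{C^0}(X^{\rm an},BU)$, use formality of the K\"ahler manifold $X^{\rm an}$ and the minimal model $({\rm SSym}_\Q[\pi_\ast(BU)^\vee],0)$ of $BU$ from Proposition \ref{hom2prop11}, apply the Brown--Szczarba--Haefliger model of Proposition \ref{hom2prop13}, and read off that the generators $v_j^\vee\otimes c_i$ are the K\"unneth (slant-product) components of the Chern classes of the evaluation map. The one slip is your parenthetical claim that the mapping-space component is simply connected --- this fails whenever $K^1_{\rm top}(X^{\rm an})\neq 0$ (e.g.\ for a positive-genus curve) --- but it is harmless, since Proposition \ref{hom2prop13} only requires the \emph{target} $BU$ to be simply connected.
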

\begin{proof}
By Proposition \ref{hom5prop12}, it suffices to show that $H^\ast({\rm Map}_{C^0}(X^{\rm an}, BU)_\al, \Q)$ is freely generated by K\"unneth components of Chern classes of $[\E_\al]$.
Recall that $c_k(\E_\al)$ equals the image of $c_k \in \Q[[c_1,c_2,\dots]] \cong H^\ast(BU,\Q)$ under the rational cohomology of the evaluation map $ev_\al : X^{\rm an} \times {\rm Map}_{C^0}(X^{\rm an}, BU)_\al \ra BU$. 

Because BU is a finite type simply-connected H-space, Proposition \ref{hom2prop11} gives that $({\rm Sym}_\Q(\pi_\ast(BU)^\vee), 0)$ is the minimal model of $BU$. Because $X^{\rm an}$ is K\"ahler, it is a formal space \cite[p.~270]{DGMS}. Therefore, we may choose $(H^\ast(X^{\rm an}, \Q), 0)$ as our finite-dimensional model for $X^{\rm an}$. Let $\{v_1,\dots,v_n \}$ be an additive $\Q$-basis for $H^\ast(X^{\rm an}, \Q)$ and let $\{v_1^\vee,\dots,v_n^\vee \}$ be a dual $\Q$-basis for $H_\ast(X^{\rm an}, \Q)$. Then, by Proposition \ref{hom2prop13}, the the rational cohomology of the evaluation map $$H^\ast(ev_\al,\Q) : H^\ast(BU,\Q) \ra H^\ast(X^{\rm an}, \Q) \otimes {\rm SSym}_\Q[\pi_\ast(BU)^\vee \otimes H_\ast(X^{\rm an}, \Q)]$$ is given by $$c_k \mapsto \sum_i v_i \otimes (v_i^\vee \otimes c_k).$$ From this, one has that the generators of ${\rm SSym}_\Q[\pi_\ast(BU)^\vee \otimes H_\ast(X^{\rm an}, \Q)]$ can all be written as slant products of Chern classes of $[\E_\al]$ with rational homology classes.
\end{proof}

\noindent Because the leading coefficients of the universal Chern character polynomials are non-zero, one can also regard the cohomology $H^\ast(\M_\al,\Q)$ as being freely generated as a commutative-graded $\Q$-algebra by the K\"unneth components of Chern {\it characters} of $\uE_\al$.  

Fix, for the remainder of this document, a basis $Q = \{v_1,\dots, v_r\}$ of the K-theory $K^0_{\rm top}(X^{\rm an})_\Q \oplus K^1_{\rm top}(X^{\rm an})_\Q$ of $X^{\rm an}$ and a dual basis $Q^\vee = \{v_1^\vee, \dots, v_r^\vee \}$ of $(K^0_{\rm top}(X^{\rm an})_\Q \oplus K^1_{\rm top}(X^{\rm an})_\Q)^\vee$. By the above Corollary, if $X$ is in class D, then for all $\al \in K^0_{\rm sst}(X)$ there is a canonical isomorphism of graded $\Q$-algebras
    \begin{equation}
        H^\ast(\M_\al,\Q) \cong {\rm SSym}[[\mu_{\al,v,i} : v \in Q, i \geq 1]],
    \end{equation}
given by $\ch_i([\uE_\al]/v^\vee) \mapsto \mu_{\al, v, i}$.

\section{Field calculations}

We explicitly compute Joyce's fields in the case that $X$ is in class D. Lel $(-)^\vee : H^\ast(X^{\rm an},\Q) \ra H^\ast(X^{\rm an},\Q)$ denote the involution 
	$$v^\vee = \begin{cases} (-1)^{{\rm deg}(v)/2} v, & 2 | {\rm deg}(v)   \\
						(-1)^{({\rm deg}(v)-1)/2} v, & 2 \nmid {\rm deg}(v).
	\end{cases}$$
By abuse of notation, we also write $(-)^\vee$ for the involution on K-theory induced by taking duals of bundles.	

We define a super-symmetric bilinear form on $K^0_{\rm top}(X^{\rm an}) \oplus K^1_{\rm top}(X^{\rm an})$ by
    $$\chi(v,w) = \int_{X^{\rm an}} \ch(v)^\vee \cdot \ch(w) \cdot {\rm Td}(X^{\rm an})$$
if $X$ is $2n$-Calabi--Yau for some $n \geq 1$ and by
    $$\chi_{\rm sym}(v,w) = \int_{X^{\rm an}} \big ( \ch(v)^\vee \cdot \ch(w) + \ch(w)^\vee \cdot \ch(v) \big ) \cdot {\rm Td}(X^{\rm an}) $$
if $X$ is not $2n$-Calabi--Yau for any $n \geq 1$. 

Let $\uE$ denote the universal complex over $X \times \M$ and, for $\al \in K^0_{\rm sst}(X)$, let $\uE_\al$ denote the universal complex over $X \times \M_\al$. Let $\E$ the evaluation map for Map$_{C^0}(X, BU \times \Z)$ and let $\E_\al$ denote the evaluation map for Map$_{C^0}(X^{\rm an}, BU \times \Z)_\al$. For $\al,\be \in K^0_{\rm sst}(X)$ define complexes $\cExt_{\al,\be} \in {\rm Perf}(\M_\al \times \M_\be)$ by
    $$\cExt_{\al,\be} := \R\pi_\ast(\pi_1^\ast(\uE_\al)^\vee \otimes^\mathbb{L} \pi_2^\ast(\uE_\be)).$$Then for all $m \in \Z$
    $$\si^\ast_{\al,\be} ((\cExt_{\al,\be})^\vee \oplus \si^\ast_{\al,\be}\cExt_{\be,\al}[2m]) \cong ((\cExt_{\al,\be})^\vee \oplus \si^\ast_{\al,\be}\cExt_{\be,\al}[2m])^\vee [2m].$$
and if $X$ happens to be  $2n$-Calabi--Yau then
$$\si_{\al,\be}^\ast ((\cExt_{\be,\al})^\vee) \cong  \cExt_{\al,\be}[2n].$$ 

This allows us to make $\hat{H}_\ast(\M,\Q)$ into a graded vertex -algebra. If $X$ is $2n$-Calabi--Yau then let $Y(-,z) : \hat{H}_\ast(\M,\Q) \ra \mathcal{F}(\hat{H}_\ast(\M,\Q))$  denote the linear map defined by (4.3) taking $\Th^\bu = (\cExt)^\vee$, $\chi$ to be the restriction of the Euler form to $K^0_{\rm sst}(X)$, and taking $\{\ep_{\al,\be}\}_{\al,\be \in K^0_{\rm sst}(X)}$ any solution of (\ref{hom3eqn3})-(\ref{hom3eqn5}) . If $X$ is not $2n$-Calabi--Yau then let $Y(-,z) : \hat{H}_\ast(\M,\Q) \ra \mathcal{F}(\hat{H}_\ast(\M,\Q))$ denote the linear map defined by (\ref{hom4eqn1}) taking $\Th^\bu = (\cExt)^\vee \oplus \si^\ast(\cExt)$, $\chi$ to be the restriction of the symmetrised Euler form to $K^0_{\rm sst}(X)$, and $\ep_{\al,\be} = (-1)^{\chi(\al,\be)}$.

To get an explicit formula for $Y(-,z)$ one has to calculate the $E$-Chern classes of $\Th^\bu$. We only know how to do this when $X$ is in class D. For brevity we write $\mathcal{U} = \pi^\ast(\E)^\vee \otimes \pi^\ast(\E)$.

\begin{lem}
\label{hom6lem2}
Let $X$ be in class D and let $\al,\be \in K_{\rm sst}^0(X)$. Then, for all $i \geq 0$ 
    $$c_i((\cExt_{\al, \be})^{\rm Betti}) = c_i(\pi^{KU}_!(\mathcal{U}_{\al,\be})).$$
\end{lem}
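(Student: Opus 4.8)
The plan is to compare the two sides via the functoriality of the Betti realization and the identification of derived pushforward along $\pi : X \times \M_{\al,\be} \to \M_{\al,\be}$ with the topological Gysin/pushforward map $\pi^{KU}_!$ in complex $K$-theory. First I would recall that by Proposition \ref{hom5prop12} (and Definition \ref{hom5dfn11}) the Betti realization of $\uE_\al$ is homotopic to the evaluation map $\E_\al$, hence the Betti realization of the complex $\pi_1^\ast(\uE_\al)^\vee \otimes^{\mathbb{L}} \pi_2^\ast(\uE_\be)$ on $X \times \M_\al \times \M_\be$ is homotopic, as a map to $BU \times \Z$, to $\mathcal{U}_{\al,\be} = \pi^\ast(\E_\al)^\vee \otimes \pi^\ast(\E_\be)$ (here the two $\pi$'s are the projections to the two mapping-space factors and then to $X^{\rm an}$). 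This uses only that Betti realization is symmetric monoidal on perfect complexes and sends $(-)^\vee$ and $\otimes^{\mathbb{L}}$ to the corresponding operations on $K$-theory classes, which follows from Lemma \ref{hom5prop9}.

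The second ingredient is that the Betti realization of the algebraic derived pushforward $\R\pi_\ast$ along the smooth projective morphism $\pi : X \times \M_{\al,\be} \to \M_{\al,\be}$ agrees with the topological pushforward $\pi^{KU}_!$ in complex $K$-theory. I would argue this by base change: the construction of $\R\pi_\ast$ is compatible with pullback to affine test schemes, and over a point (or more generally after Betti realization) the algebraic proper pushforward in $K$-theory for the constant family $X \times U \to U$ becomes the topological Gysin map for $X^{\rm an} \times U^{\rm an} \to U^{\rm an}$, which is just (external product with) the Atiyah--Hirzebruch/Gysin pushforward $p^{KU}_!$ for $X^{\rm an} \to \mathrm{pt}$, i.e. integration against the Todd genus on the $K$-theory side. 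Concretely, one invokes the Riemann--Roch/Grothendieck--Riemann--Roch compatibility already implicit in the setup: $\pi^{KU}_! = $ the topological realization of $\R\pi_\ast$, so that $(\cExt_{\al,\be})^{\rm Betti} = \pi^{KU}_!(\mathcal{U}_{\al,\be})$ as classes in $K^0$ of $\Map_{C^0}(X^{\rm an},BU)_\al \times \Map_{C^0}(X^{\rm an},BU)_\be$. Applying $c_i(-)$ to both sides of this identity of $K$-theory classes then gives the claimed equality of Chern classes; this is legitimate since $c_i$ is a well-defined function on $K^0$ of a space with the homotopy type of a CW complex, which $\M_\al^{\rm Betti}$ has by Lemma \ref{hom5lem7}.

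Combining these, $c_i((\cExt_{\al,\be})^{\rm Betti}) = c_i\big((\R\pi_\ast \mathcal{U})^{\rm Betti}\big) = c_i\big(\pi^{KU}_!((\mathcal{U})^{\rm Betti})\big) = c_i(\pi^{KU}_!(\mathcal{U}_{\al,\be}))$, where the middle equality is the compatibility of $\R\pi_\ast$ with Betti realization and the last uses the first ingredient together with naturality of $\pi^{KU}_!$ under homotopy. I would present this as a short chain of equalities, citing Blanc's comparison results (Lemma \ref{hom5prop9}, and \cite{Bl}) for the monoidal/pushforward compatibility of Betti realization with $K$-theory.

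The main obstacle I anticipate is justifying cleanly that Betti realization intertwines the \emph{algebraic} derived pushforward $\R\pi_\ast$ along $X \times \M \to \M$ with the \emph{topological} $K$-theory Gysin map $\pi^{KU}_!$ — i.e. a ``Betti realization of Grothendieck--Riemann--Roch'' statement. The subtlety is that $\R\pi_\ast$ is defined on the level of perfect complexes/stacks (via To\"en--Vaqui\'e and Blanc's spectral presheaves), while $\pi^{KU}_!$ is the purely topological pushforward; one must check their realizations coincide, presumably by reducing to the constant family $X \times U \to U$ over affine $U$, using that $X$ is smooth and proper so $\pi$ is smooth proper, and then invoking that for a trivial fibration the algebraic Euler characteristic in $K$-theory realizes to the topological one (Atiyah's $K$-theoretic Riemann--Roch, or equivalently the fact that the $K$-theory transfer for $X^{\rm an} \to \mathrm{pt}$ is the Bott-periodic analogue of integration). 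Everything else is formal naturality and the already-established homotopy equivalences of this section.
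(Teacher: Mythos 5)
Your first ingredient is fine and matches the paper: by Proposition \ref{hom5prop12} the Betti realization of $\pi_1^\ast(\uE_\al)^\vee\otimes^{\mathbb L}\pi_2^\ast(\uE_\be)$ is the class $\mathcal U_{\al,\be}$ built from the evaluation maps. The gap is in your second ingredient. The assertion that ``$\pi^{KU}_!$ is the topological realization of $\R\pi_\ast$'' is not something implicit in the setup --- it is essentially the entire content of the lemma, and your proposed justification (base change to affine test schemes plus Grothendieck--Riemann--Roch for the constant family $X\times U\to U$) only produces an equality of $K^0$-classes over each affine chart, or over each compact subset. That does not by itself yield a (weak) homotopy between the two classifying maps $\M^{\rm Betti}\times\M^{\rm Betti}\to BU\times\Z$: two maps into $BU\times\Z$ out of a non-compact space can agree in $K^0$ of every compact subset without any mechanism forcing the local comparisons to be coherent, and the Betti realization (a left Kan extension, hence a colimit over affines mapping to $\M$) does not commute with $\R\pi_\ast$ for formal reasons. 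You flag this as the ``main obstacle'' but do not supply the idea that closes it.

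The paper closes it with a group-completion argument that your sketch is missing. First, $\pi^{KU}_!(\mathcal U)$ is modelled concretely by the family of twisted Dolbeault operators $D^{\nabla_{\overline P\times Q}}$, using the Atiyah--Singer families index theorem; this gives an explicit weak H-map $\overline D^U:\Om^\infty K^{\rm top}(X^{\rm an})\times\Om^\infty K^{\rm top}(X^{\rm an})\to{\rm Fred}(H)\simeq BU\times\Z$. Second, the comparison with $(\cExt)^{\rm Betti}$ is carried out only on the uncompleted sub-$E_\infty$-space $\Map_{\rm alg}(X,\Gr)^{\rm an}\times\Map_{\rm alg}(X,\Gr)^{\rm an}$ of globally generated bundles, where on each compact subset both sides are literally the index bundle of the twisted Dolbeault operator. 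Third --- and this is the step you need --- the weak universal property of homotopy-theoretic group completions (Proposition \ref{hom2prop10}, via the group-likeness of ${\rm Fred}(H)$ and Proposition \ref{BAH}) guarantees that two weak H-maps out of the group completion agreeing on the uncompleted space are weakly homotopic, which is enough to equate Chern classes. Without this extension mechanism, or an equally strong substitute, your chain of equalities $c_i((\R\pi_\ast\mathcal U)^{\rm Betti})=c_i(\pi^{KU}_!(\mathcal U^{\rm Betti}))$ is not justified.
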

\begin{proof}
One can describe $\pi^{KU}_!(\pi^\ast_\al (\E_\al)^\vee \otimes \pi^\ast_\be(\E_\be))$ using twisted elliptic operators. Consider the elliptic operator
	$$D := \overline{\partial} + \overline{\partial}^* : C^\infty(\Lambda^{0,2\ast}T^\ast X^{\rm an}) \ra C^\infty(\Lambda^{0,2\ast+1}T^\ast X^{\rm an})$$ 
on $X^{\rm an}$. Given complex vector bundles $P,Q \ra X^{\rm an}$ we can choose connections $\nabla_P, \nabla_Q$ on them and we can write down a Fredholm operator $D^{\nabla_{\overline{P} \times Q}} : C^\infty(\Lambda^{0,2\ast}T^\ast X^{\rm an} \otimes \overline{P} \otimes Q) \ra C^\infty(\Lambda^{0,2\ast+1}T^\ast X^{\rm an} \otimes \overline{P} \otimes Q)$ as in \cite[Def.~2.20]{JTU}. Then $(P,Q) \mapsto D^{\nabla_{\overline{P} \times Q}}$ gives a map $$D^U : \coprod_{n \geq 0} {\rm Map}_{C^0}(X^{\rm an}, BU(n)) \times \coprod_{n \geq 0} {\rm Map}_{C^0}(X^{\rm an}, BU(n)) \ra {\rm Fred}(H)$$ whose homotopy class is independent of the choices of connections $\nabla_P, \nabla_Q$. By group-likeness of Fred$(H)$ and the weak universal property of homotopy-theoretic group completions there exists a weak H-map $$\overline{D}^U : \Om^\infty K^{\rm top}(X^{\rm an}) \times \Om^\infty K^{\rm top}(X^{\rm an}) \ra {\rm Fred}(H) \simeq BU \times \Z$$ such that the restriction of $\overline{D}^U$ along the completion map $$\coprod_{n \geq 0} {\rm Map}_{C^0} (X^{\rm an}, BU(n)) \ra \Om^\infty K^{\rm top}(X^{\rm an})$$ is weakly homotopic to $D^U$. By the families index theorem \cite[Thm.~3.1]{AtSi}, the weak homotopy class of $\overline{D}^U$ equals the weak homotopy class of $\pi^{KU}_!(\mathcal{U})$. As $X$ is in class D, it then suffices to show that the restrictions
\begin{center}
    \begin{tikzcd}
        \M^{\rm Betti} \times \M^{\rm Betti} \arrow[hookrightarrow]{r} & \Om^\infty K^{\rm top}(X^{\rm an}) \times \Om^\infty K^{\rm top}(X^{\rm an}) \arrow[shift left = 1.0ex]{r}{\overline{D}^U} \arrow[shift right = 1.0ex, swap]{r}{(\cExt)^{\rm Betti}} & BU \times \Z
    \end{tikzcd}
\end{center}
are weakly homotopic. If $C \subset \Map_{\rm alg}(X, \Gr)^{\rm an} \times \Map_{\rm alg}(X,\Gr)^{\rm an}$ is compact then $[(\cExt)^{\rm Betti}|_C] \in K^0_{\rm top}(C)$ equals the index bundle $[D^U|_C]$. In particular, the restrictions of $(\cExt)^{\rm Betti}$ and $\overline{D}^U$ to $\Map_{\rm alg}(X, \Gr)^{\rm an} \times \Map_{\rm alg}(X,\Gr)^{\rm an}$ are weakly homotopic. The claim then follows from Proposition \ref{BAH} and the fact that Fred$(H)$ is group-like.
\end{proof}

\begin{prop}
\label{hom6prop3}
Let $X$ be in class D. Then for all $\al,\be \in K^0_{\rm sst}(X)$, $i \geq 1$
    \begin{equation*}
    \begin{split}
        \ch_i((\cExt_{\al,\be})^\vee) = \sum_{\stackrel{j,k \geq 0; i = j+k}{v,w \in Q}} (-1)^k \chi(v,w) \mu_{\al,v,j} \boxtimes \mu_{\be,w,k}.
    \end{split}
    \end{equation*}  
\noindent and     
    \begin{equation*}
    \begin{split}
        \ch_i((\cExt_{\al,\be})^\vee \oplus (\si^\ast \cExt_{\be,\al})) = \sum_{\stackrel{j,k \geq 0; i = j+k}{v,w \in Q}} (-1)^k \chi_{\rm sym}(v,w) \mu_{\al,v,j} \boxtimes \mu_{\be,w,k}.
    \end{split}
    \end{equation*}  
\end{prop}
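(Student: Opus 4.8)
The plan is to reduce the computation to Grothendieck--Riemann--Roch for the fibre integration $\pi^{KU}_!$, substitute the K\"unneth decomposition of the evaluation map's Chern character over $X^{\rm an}$, and then carry out the push-forward so that the fibre integral becomes the Euler pairing $\chi$. Passing from Chern classes to Chern characters in Lemma \ref{hom6lem2} gives $\ch(\cExt_{\al,\be}) = \ch(\pi^{KU}_!(\mathcal{U}_{\al,\be}))$. The projection $\pi : X^{\rm an} \times \M_\al^{\rm Betti} \times \M_\be^{\rm Betti} \ra \M_\al^{\rm Betti} \times \M_\be^{\rm Betti}$ has relative tangent bundle $\pi_{X^{\rm an}}^\ast TX^{\rm an}$, so by the topological Riemann--Roch (families index) theorem \cite{AtSi},
$$\ch(\pi^{KU}_!(\mathcal{U}_{\al,\be})) = \pi_\ast\bigl(\ch(\mathcal{U}_{\al,\be}) \cdot \pi_{X^{\rm an}}^\ast {\rm Td}(X^{\rm an})\bigr);$$
and since $\mathcal{U}_{\al,\be} = \pi_1^\ast(\E_\al)^\vee \otimes \pi_2^\ast(\E_\be)$ we have $\ch(\mathcal{U}_{\al,\be}) = \pi_1^\ast(\ch(\E_\al)^\vee) \cdot \pi_2^\ast\ch(\E_\be)$.

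Next I would feed in the K\"unneth decomposition $\ch([\uE_\al]) = \sum_{v \in Q} \ch(v) \boxtimes M_{\al,v}$, where $M_{\al,v} := \sum_{j \geq 0}\mu_{\al,v,j}$ is the total slant component; this is precisely the content of the assignment $\ch_i([\uE_\al]/v^\vee) \mapsto \mu_{\al,v,i}$ produced by Proposition \ref{hom5prop12} together with Theorem \ref{hom5thm13} in its Chern-character form, and similarly for $\uE_\be$. Substituting these, using that $(-)^\vee$ is a ring involution acting by $(-1)^i$ on $H^{2i}$ (hence distributing over $\boxtimes$ factorwise, up to the Koszul sign when both factors are odd), and performing $\pi_\ast$ --- which restricts on the $X^{\rm an}$-factor to $\int_{X^{\rm an}}$ --- the projection formula pairs the $X^{\rm an}$-components $\ch(v)^\vee$ and $\ch(w)$ with ${\rm Td}(X^{\rm an})$ to produce the scalar $\int_{X^{\rm an}}\ch(v)^\vee \cdot \ch(w) \cdot {\rm Td}(X^{\rm an}) = \chi(v,w)$, leaving $\ch(\cExt_{\al,\be}) = \sum_{v,w}\chi(v,w)\, M_{\al,v}^\vee \boxtimes M_{\be,w}$.

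To conclude the first formula I would extract the degree-$2i$ part: $M_{\al,v}^\vee$ contributes $(-1)^j$ to the component $\mu_{\al,v,j}$, and the outer duality $\ch_i((\cExt_{\al,\be})^\vee) = (-1)^i\ch_i(\cExt_{\al,\be})$ converts this to $(-1)^{j+k}(-1)^j = (-1)^k$, giving $\ch_i((\cExt_{\al,\be})^\vee) = \sum_{j+k=i}\sum_{v,w}(-1)^k\chi(v,w)\,\mu_{\al,v,j}\boxtimes\mu_{\be,w,k}$. For the second formula I would run the identical computation for $\cExt_{\be,\al}$ and apply $\si^\ast$, which exchanges the two moduli factors without sign on even-degree classes; after relabelling $v \leftrightarrow w$ and $j \leftrightarrow k$ this yields $\ch_i(\si^\ast\cExt_{\be,\al}) = \sum_{j+k=i}\sum_{v,w}(-1)^k\chi(w,v)\,\mu_{\al,v,j}\boxtimes\mu_{\be,w,k}$, and adding it to the first formula replaces $\chi(v,w)$ by $\chi(v,w) + \chi(w,v) = \chi_{\rm sym}(v,w)$.

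The main obstacle is the sign bookkeeping, concentrated in the fermionic sector: when $v$ or $w$ lies in $K^1_{\rm top}(X^{\rm an})$ the K\"unneth factors $\ch(v)$, $M_{\al,v}$ and their partners have odd cohomological degree, so $\boxtimes$, $\si^\ast$, and $(-)^\vee$ all acquire Koszul signs, and one must check that these conspire with the $(-1)^j$ and $(-1)^i$ above to reproduce exactly the asserted $(-1)^k$ together with the correct (anti)symmetry of $\chi$ on $K^0_{\rm top}(X^{\rm an}) \oplus K^1_{\rm top}(X^{\rm an})$ --- in particular this is where the anti-symmetry of $\chi|_{K^1}$ enters. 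Apart from that, every ingredient --- Grothendieck--Riemann--Roch, the K\"unneth expansion, and the evaluation of the fibre integral --- is formal once Lemma \ref{hom6lem2} and Proposition \ref{hom5prop12} are in hand.
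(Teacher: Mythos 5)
Your proposal follows essentially the same route as the paper's proof: reduce via Lemma \ref{hom6lem2} to the K-theoretic pushforward $\pi^{KU}_!(\mathcal{U}_{\al,\be})$, apply a topological Riemann--Roch theorem (the paper cites Dold's Atiyah--Hirzebruch--Riemann--Roch where you cite the families index theorem, but the content is the same), substitute the K\"unneth decomposition of $\ch(\E_\al)$ so the fibre integral becomes $\chi(v,w)$, and combine the inner and outer duality signs $(-1)^j(-1)^{j+k}=(-1)^k$. The argument is correct, and your treatment of the symmetrized case and the flagged sign bookkeeping in the odd-degree sector is if anything slightly more explicit than the paper's, which simply declares that case ``similar.''
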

\begin{proof}
By Lemmas \ref{hom6lem2} and Dold's Atiyah--Hirzebruch--Riemann--Roch theorem \cite{Dold}, we compute 
    \begin{align*}
    \begin{split}
    	\ch((\cExt_{\al,\be})^\vee) & = (\int_{X^{\rm an}} \pi^\ast_\al(\ch(\E_\al))^\vee \cdot \pi^\ast_\be (\ch^E(\E_\be)) \cdot \pi^\ast_{\al,\be}{\rm Td}(X^{\rm an}))^\vee \\
	& = (\int_{X^{\rm an}} \pi^\ast_\al\ch(\sum_{v \in Q} v^\vee \boxtimes ([\E_\al] / v)^\vee) \cdot \pi^\ast_\be\ch(\sum_{w \in Q} w \boxtimes [\E_\be]/ w ) \cdot \\ \pi^\ast_{\al,\be}{\rm Td}(X^{\rm an}))^\vee \\
	& = (\sum_{v,w \in Q} \chi(v,w) \ch([\E_\al]/v)^\vee \ch([\E_\be]/w) )^\vee.
    \end{split}
    \end{align*}
So, for $i \geq 0$,
	$$\ch_i((\cExt_{\al,\be})^\vee) = \sum_{\stackrel{v,w \in Q}{i=j+k}} (-1)^k \chi(v,w) \mu_{\al,v,j} \boxtimes \mu_{\be,w,k}.$$ 
The  $(\cExt_{\al,\be})^\vee \oplus \si^\ast \cExt_{\be,\al}$  case is similar. 
\end{proof}

For $\al \in K^0_{\rm sst}(X)$, consider the $\Q$-algebra ${\rm SSym}_\Q[u_{\al,v,i} : v \in Q, i \geq 1]$. Define a pairing
    $${\rm SSym}_\Q[[\mu_{\al,v,i} : v \in Q, i \geq 1]] \times {\rm SSym}_\Q[u_{\al,w,i} : w \in Q, i \geq 1] \longrightarrow \Q$$
by     
\begin{equation}
\label{hom6eqn2}
    (\prod_{v \in Q, i \geq 1} \mu_{\al,v,i}^{m_{v,i}}) \cdot (\prod_{v \in Q, i \geq 1} u_{\al,v,i}^{n_{v,i}}) = \begin{cases}
        \prod_{v \in Q, i \geq 1} \frac{m_{v,i}!}{((i-1)!)^{m_{v,i}}}, & m_{v_i,i} = n_{v_i,i}, \forall i \\
        0, & {\rm otherwise.}
    \end{cases}
\end{equation}

\noindent We identify $H_\ast(\M_\al,\Q)$ with ${\rm SSym}_\Q[u_{\al,w,i}: i \geq 1, w \in Q]$ using (\ref{hom6eqn2}).

\begin{lem}
\label{hom6lem4}
Let $X$ in class $D$. Then for all $\al, \be \in K^0_{\rm sst}(X)$
    $$H_\ast(\Phi_{\al,\be})[(\prod_{v \in Q, i \geq 1} u^{m_{v,i}}_{\al,v,i}) \boxtimes (\prod_{v \in Q, i \geq 1} u^{n_{v,i}}_{\be,v,i})] = \prod_{v \in Q, i \geq 1} u^{m_{v,i} + n_{v,i}}_{\al+\be,v,i}.$$
\end{lem}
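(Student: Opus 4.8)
The plan is to obtain the lemma by dualising. Under the identification of Theorem~\ref{hom5thm13} and the surrounding discussion, $H^\ast(\M_\ga,\Q)\cong{\rm SSym}_\Q[[\mu_{\ga,v,i}]]$ is the free graded-commutative algebra on the $\mu_{\ga,v,i}$, and $H_\ast(\M_\ga,\Q)\cong{\rm SSym}_\Q[u_{\ga,v,i}]$ is identified with its graded dual in such a way that the Kronecker pairing $H^\ast\otimes H_\ast\to\Q$ becomes (\ref{hom6eqn2}); by the K\"unneth theorem the same holds on $\M_\al\times\M_\be$, the generators there being the external products $\mu_{\al,v,i}\boxtimes1$ and $1\boxtimes\mu_{\be,v,i}$. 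Since $H_\ast(\Phi_{\al,\be})$ is the transpose of $\Phi_{\al,\be}^\ast\colon H^\ast(\M_{\al+\be},\Q)\to H^\ast(\M_\al\times\M_\be,\Q)$ with respect to the Kronecker pairings, it suffices to compute $\Phi_{\al,\be}^\ast$ on the polynomial generators $\mu_{\al+\be,v,i}$, expand on monomials, and then invoke non-degeneracy of (\ref{hom6eqn2}) degree by degree.

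First I would prove that each generator is \emph{primitive} for $\Phi_{\al,\be}$, namely
$$\Phi_{\al,\be}^\ast(\mu_{\al+\be,v,i}) = \mu_{\al,v,i}\boxtimes1 + 1\boxtimes\mu_{\be,v,i}.$$
This is the geometric input. Since $\Phi_{\al,\be}$ is induced by the direct sum of objects (Definition~\ref{hom4dfn3}), the universal complex restricts as $({\rm Id}_X\times\Phi_{\al,\be})^\ast\uE_{\al+\be}\simeq\pi^\ast\uE_\al\oplus\pi^\ast\uE_\be$, the pullbacks being along the two projections $X\times\M_\al\times\M_\be\to X\times\M_\al$ and $X\times\M_\al\times\M_\be\to X\times\M_\be$. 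Additivity of the Chern character of perfect complexes (Definition~\ref{hom5dfn10}), naturality of Chern classes, and naturality of the slant product $(-)/v^\vee$ then give $\ch_i\big(({\rm Id}_X\times\Phi_{\al,\be})^\ast\uE_{\al+\be}\,/\,v^\vee\big)=\mu_{\al,v,i}\boxtimes1+1\boxtimes\mu_{\be,v,i}$, which is the displayed formula because $\mu_{\al+\be,v,i}=\ch_i(\uE_{\al+\be}/v^\vee)$. As $\Phi_{\al,\be}^\ast$ is a ring homomorphism, this determines it on all of ${\rm SSym}_\Q[[\mu_{\al+\be,v,i}]]$.

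It remains to dualise. For a monomial $\prod_{v,i}\mu_{\al+\be,v,i}^{p_{v,i}}$ the (super-)binomial theorem gives
$$\Phi_{\al,\be}^\ast\Big(\prod_{v,i}\mu_{\al+\be,v,i}^{p_{v,i}}\Big)=\prod_{v,i}\big(\mu_{\al,v,i}\boxtimes1+1\boxtimes\mu_{\be,v,i}\big)^{p_{v,i}},$$
and pairing this (via (\ref{hom6eqn2}) on $\M_\al\times\M_\be$) against $(\prod_{v,i}u_{\al,v,i}^{m_{v,i}})\boxtimes(\prod_{v,i}u_{\be,v,i}^{n_{v,i}})$ is nonzero exactly when $p_{v,i}=m_{v,i}+n_{v,i}$ for all $v,i$; in that case only the term $\prod_{v,i}\binom{m_{v,i}+n_{v,i}}{m_{v,i}}\,\mu_{\al,v,i}^{m_{v,i}}\boxtimes\mu_{\be,v,i}^{n_{v,i}}$ survives, and its pairing equals
$$\prod_{v,i}\binom{m_{v,i}+n_{v,i}}{m_{v,i}}\cdot\frac{m_{v,i}!\,n_{v,i}!}{((i-1)!)^{m_{v,i}+n_{v,i}}}=\prod_{v,i}\frac{(m_{v,i}+n_{v,i})!}{((i-1)!)^{m_{v,i}+n_{v,i}}},$$
which is precisely $\langle\prod_{v,i}\mu_{\al+\be,v,i}^{p_{v,i}},\prod_{v,i}u_{\al+\be,v,i}^{m_{v,i}+n_{v,i}}\rangle$. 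Since this holds for every cohomology monomial, non-degeneracy of (\ref{hom6eqn2}) forces $H_\ast(\Phi_{\al,\be})[(\prod_{v,i}u_{\al,v,i}^{m_{v,i}})\boxtimes(\prod_{v,i}u_{\be,v,i}^{n_{v,i}})]=\prod_{v,i}u_{\al+\be,v,i}^{m_{v,i}+n_{v,i}}$.

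I expect the main difficulty to be sign bookkeeping rather than anything substantial: for odd classes $v\in Q\cap K^1_{\rm top}(X^{\rm an})$ the generators $\mu_{\al,v,i},u_{\al,v,i}$ are odd (so their squares vanish and the binomial expansion truncates), and one must check that the Koszul signs produced by the super-binomial theorem, by the external-product pairing on $\M_\al\times\M_\be$, and by reordering generators all cancel so that the stated sign-free identity is correct. The arithmetic heart of the argument --- that $\binom{m+n}{m}\,m!\,n!=(m+n)!$, so the binomial coefficient exactly reconciles the normalisations in (\ref{hom6eqn2}) --- is elementary.
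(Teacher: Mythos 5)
Your proposal is correct and follows essentially the same route as the paper: both establish $\Phi_{\al,\be}^\ast(\mu_{\al+\be,v,i})=\mu_{\al,v,i}\boxtimes 1+1\boxtimes\mu_{\be,v,i}$ from the splitting of the pulled-back universal complex, expand monomials binomially, and dualise through the pairing (\ref{hom6eqn2}) using the identity $\binom{m+n}{m}\,m!\,n!=(m+n)!$. Your explicit remark about Koszul signs for odd generators $v\in Q^-$ is a point the paper's proof passes over silently, but it does not change the argument.
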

\begin{proof}
Pulling $\uE_{\al+\be}$ along the map ${\rm Id} \times \Phi_{\al,\be} : X \times \M_\al \times \M_\be \rightarrow X \times \M_{\al + \be}$ gives
    $$({\rm Id}_X \times \Phi_{\al,\be})^\ast (\uE_{\al+\be}) \cong \pi^\ast_1(\uE_\al) \oplus \pi^\ast_2(\uE_\be).$$
Taking K-theory classes, slanting with $v^\vee \in Q^\vee$, and then taking $\ch_i$ gives
\begin{align*}
     H^\ast(\Phi_{\al,\be})(\mu_{\al+\be,v,i}) & = \mu_{\al,v,i} \boxtimes 1 + 1 \boxtimes \mu_{\be,v,i}
\end{align*}
so that 
\begin{equation}
\label{hom6eqn3}
    H^\ast(\Phi_{\al,\be})(\prod_{v \in Q, i \geq 1}\mu_{\al+\be,v,i}^{n_{v,i}}) = \prod_{v \in Q, i \geq 1} \sum_{0 \leq m_{v,i} \leq n_{v,i}} {n_{v,i} \choose m_{v,i}} \mu_{\al,v,i}^{m_{v,i}} \boxtimes \mu_{\be,v,i}^{n_{v,i}-m_{v,i}}.
\end{equation}
Under (\ref{hom6eqn2}), $(\prod_{v \in Q, i \geq 1} u^{m_{v,i}}_{\al,v,i}) \boxtimes (\prod_{v \in Q, i \geq 1} u^{n_{v,i}}_{\al,v,i})$ gets identified with the functional 
$$(\prod_{v \in Q, i \geq 1} \mu^{m'_{v,i}}_{\al,v,i}) \boxtimes (\prod_{v \in Q, i \geq 1} \mu^{n'_{v,i}}_{\be,v,i}) \mapsto 
\begin{cases}
    \prod_{v \in Q, i \geq 1} \frac{m_{v,i}!n_{v,i}!}{((i-1)!)^{m_{v,i} + n_{v,i}}}, & \begin{split} m'_{v,i} = m_{v,i}, \\ n'_{v,i} =  n_{v,i} \end{split} \\
    0, & \text{otherwise}
\end{cases}$$
so that, by (\ref{hom6eqn3}), $E_\ast(\Phi_{\al,\be})$ acts as
    \begin{align*}
    \begin{split}
        \prod_{v \in Q, i \geq 1} \mu_{\al+\be,v,i}^{\ell_{v,i}} 
        & \mapsto (\prod_{v \in Q, i \geq 1} u^{m_{v,i}}_{\al,v,i}) \boxtimes (\prod_{v \in Q, i \geq 1} u^{n_{v,i}}_{\al,v,i}) \big ( \prod_{v \in Q, i \geq 1} \sum_{0 \leq r_{v,i} \leq \ell_{v,i}} {\ell_{v,i} \choose r_{v,i}} \\ \mu_{\al,v,i}^{r_{v,i}} \boxtimes \mu_{\be,v,i}^{\ell_{v,i}-r_{v,i}} \big ) \\
        & = \begin{cases} \begin{split} \prod_{v \in Q, i \geq 1} {n_{v,i} + m_{v,i} \choose m_{v,i}} \frac{n_{v,i}! m_{v,i}!}{((i-1)!)^{n_{v,i} + m_{v,i}}}, & \ell_{v,i} = n_{v,i} +  \\ m_{v,i}, \text{ for all } i \end{split} \\
        0, & \text{otherwise}
        \end{cases} \\
        & = \begin{cases} \begin{split} \prod_{v \in Q, i \geq 1} \frac{(m_{v,i} + n_{v,i})!}{((i-1)!)^{m_{v,i}+n_{v,i}}}, & \ell_{v,i} = n_{v,i} + m_{v,i}, \\ \text{ for all } i,  \end{split} \\
    0, & \text{otherwise.}
    \end{cases}
    \end{split}
    \end{align*}
This functional is represented by $\prod_{v \in Q, i \geq 1} u^{n_{v,i}+m_{v,i}}_{\al+\be,v,i}$.
\end{proof}

\begin{lem}
\label{hom6lem5}
Let $X$ be in class D and let $\al \in K^0_{\rm sst}(X)$. Then
    $$(\prod_{v \in Q, i \geq 1} u^{n_{v,i}}_{\al,v,i}) \cap (\prod_{v \in Q, i \geq 1} \mu^{m_{v,i}}_{\al,v,i}) = \begin{cases} \begin{split} \prod_{v \in Q, i \geq 1} \frac{n_{v,i}!}{(n_{v,i}-m_{v,i})!((i-1)!)^{m_{v,i}}}u^{n_{v,i}-m_{v,i}}_{\al,v,i}, &  \\ n_{v,i} \geq m_{v,i} \text{ for all } v,i \end{split} \\
    0, \hspace{14em} \text{otherwise.}
    \end{cases}$$
\end{lem}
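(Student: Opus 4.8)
The plan is to deduce the identity purely formally from the adjunction between cap and cup products, once the ring structure of $H^\ast(\M_\al,\Q)$ and the non-degeneracy of the pairing (\ref{hom6eqn2}) are in hand. Recall from Theorem \ref{hom5thm13} that $H^\ast(\M_\al,\Q)\cong{\rm SSym}_\Q[[\mu_{\al,v,i}:v\in Q,i\ge1]]$ is free (super)commutative on the K\"unneth components of Chern characters, so the cup product of two monomials in the $\mu_{\al,v,i}$ is simply the product monomial (up to a Koszul sign from reordering odd generators). Moreover the pairing (\ref{hom6eqn2}), by which $H_\ast(\M_\al,\Q)$ is identified with ${\rm SSym}_\Q[u_{\al,v,i}]$, is the evaluation pairing $H_\ast\times H^\ast\to\Q$, and it is perfect in each (finite-dimensional) graded degree, since $H^\ast(\M_\al,\Q)$ is finite type by the same freeness statement. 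Hence, to identify $(\prod_{v,i}u_{\al,v,i}^{n_{v,i}})\cap(\prod_{v,i}\mu_{\al,v,i}^{m_{v,i}})$ it is enough to compute its pairing against an arbitrary monomial $\prod_{v,i}\mu_{\al,v,i}^{\ell_{v,i}}$ and to compare with the pairing of the asserted right-hand side against that same monomial.

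Carrying this out, the cap--cup adjunction gives
\begin{equation*}
\langle({\textstyle\prod_{v,i}}u_{\al,v,i}^{n_{v,i}})\cap({\textstyle\prod_{v,i}}\mu_{\al,v,i}^{m_{v,i}}),\ {\textstyle\prod_{v,i}}\mu_{\al,v,i}^{\ell_{v,i}}\rangle=\langle{\textstyle\prod_{v,i}}u_{\al,v,i}^{n_{v,i}},\ {\textstyle\prod_{v,i}}\mu_{\al,v,i}^{m_{v,i}+\ell_{v,i}}\rangle ,
\end{equation*}
and by (\ref{hom6eqn2}) the right side equals $\prod_{v,i}\frac{n_{v,i}!}{((i-1)!)^{n_{v,i}}}$ exactly when $\ell_{v,i}=n_{v,i}-m_{v,i}$ for every $v,i$ (which forces $n_{v,i}\ge m_{v,i}$), and vanishes otherwise. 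On the other side, pairing $\prod_{v,i}\frac{n_{v,i}!}{(n_{v,i}-m_{v,i})!((i-1)!)^{m_{v,i}}}u_{\al,v,i}^{n_{v,i}-m_{v,i}}$ against $\prod_{v,i}\mu_{\al,v,i}^{\ell_{v,i}}$ and again using (\ref{hom6eqn2}), the result likewise vanishes unless $\ell_{v,i}=n_{v,i}-m_{v,i}$ for all $v,i$, and then equals $\prod_{v,i}\frac{n_{v,i}!}{(n_{v,i}-m_{v,i})!((i-1)!)^{m_{v,i}}}\cdot\frac{(n_{v,i}-m_{v,i})!}{((i-1)!)^{n_{v,i}-m_{v,i}}}=\prod_{v,i}\frac{n_{v,i}!}{((i-1)!)^{n_{v,i}}}$. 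The two functionals agree on every monomial, so by perfectness of (\ref{hom6eqn2}) the two homology classes coincide, which is the claim.

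The only point needing real care --- and the one I expect to be the main obstacle --- is the bookkeeping of signs coming from the super-structure. For $v\in Q^-$ the classes $u_{\al,v,i}$ and $\mu_{\al,v,i}$ are odd, so the exponents lie in $\{0,1\}$ and every factorial collapses to $1$; what must be verified is that the Koszul signs produced by reordering odd generators in the cup product and in the graded cap--cup adjunction are precisely cancelled by the sign conventions built into (\ref{hom6eqn2}). Fixing the total order on $Q=\{v_1,\dots,v_r\}$ once and for all reduces this to a direct check, and in the purely even case $Q=Q^+$ no signs arise at all. Everything else is the formal cap--cup duality together with the elementary coefficient arithmetic above.
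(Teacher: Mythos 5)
Your proposal is correct and follows essentially the same route as the paper's proof: both use the cap--cup adjunction to reduce the claim to evaluating the two sides as functionals on monomials $\prod_{v,i}\mu_{\al,v,i}^{\ell_{v,i}}$ via the pairing (\ref{hom6eqn2}), and both arrive at the same coefficient $\prod_{v,i}n_{v,i}!/((i-1)!)^{n_{v,i}}$ supported on $\ell_{v,i}=n_{v,i}-m_{v,i}$. Your added remark about Koszul signs for odd generators is a reasonable point of care that the paper's proof passes over silently, but it does not change the argument.
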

\begin{proof}
The cap product is dual to the cup product under (\ref{hom6eqn2}). Therefore, the cap product $(\prod_{v \in Q, i \geq 1} u^{n_{v,i}}_{\al,v,i}) \cap (\prod_{v \in Q, i \geq 1} \mu^{m_{v,i}}_{\al,v,i})$ acts as
\begin{align*}
    \prod_{v \in Q, i \geq 1} \mu^{\ell_{v,i}}_{\al,v,i} & \mapsto u^{n_{v,i}}_{\al,v,i}(\prod_{v \in Q, i \geq 1} \mu_{\al,v,i}^{m_{v,i}} \cdot \prod_{v \in Q, i \geq 1} \mu_{\al,v,i}^{\ell_{v,i}}) \\
    & = \begin{cases}
                    \prod_{v \in Q, i \geq 1} \frac{n_{v,i}!}{((i-1)!)^{n_{v,i}}}, & \ell_{v,i} = n_{v,i} - m_{v,i} \\
                    0, & \text{otherwise}
        \end{cases}
\end{align*}
and $\prod_{v \in Q, i \geq 1} \frac{n_{v,i}!}{(n_{v,i}-m_{v,i})!((i-1)!)^{m_{v,i}}}u^{n_{v,i}-m_{v,i}}_{\al,v,i}(\prod_{v \in Q, i \geq 1} \mu^{\ell_{v,i}}_{\al,v,i})$ equals 
$\prod_{v \in Q, i \geq 1} \\ \frac{n_{v,i}!}{(n_{v,i}-m_{v,i})!((i-1)!)^{m_{v,i}}} \frac{(n_{v,i}-m_{v,i})!}{((i-1)!)^{n_{v,i}-m_{v,i}}}$ if $\ell_{v,i} = n_{v,i} + m_{v,i}$ for all $i$ and equals $0$ otherwise.
\end{proof}

\begin{lem}
\label{hom6lem6}
Let $X$ be in class D, $k \geq 0$, and $v \in Q$. Then
    $$H_\ast(\Psi_0)(t^i \boxtimes u_{0,v,1}) =  u_{0,v,i+1}.$$
\end{lem}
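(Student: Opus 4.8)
The plan is to compute $H^\ast(\Psi_0)$ on the polynomial generators $\mu_{0,v,i}$ of $H^\ast(\M_0,\Q)$ and then transpose the answer across the pairing (\ref{hom6eqn2}), exactly in the spirit of the proofs of Lemmas \ref{hom6lem4} and \ref{hom6lem5}.

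First I would pin down the pullback of the universal complex. By the moduli description, $\Psi_0$ classifies the family obtained from $\uE_0$ by twisting with the universal weight-one character of $\G_m$ (this is also the content of condition (e) of Definition \ref{hom4dfn3}, specialised to $\Th^\bu=(\cExt)^\vee$), so there is an isomorphism $(\Id_X\times\Psi_0)^\ast(\uE_0)\cong\pi^\ast_{[\ast/\G_m]}(E_1)\otimes\pi^\ast_{X\times\M_0}(\uE_0)$ on $X\times[\ast/\G_m]\times\M_0$, where $E_1$ is the universal weight-one line bundle on $[\ast/\G_m]$, with $c_1(E_1)=t$ the degree-two generator of $H^\ast([\ast/\G_m],\Q)=\Q[t]$. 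Taking Chern characters gives $\ch\big((\Id_X\times\Psi_0)^\ast\uE_0\big)=e^{t}\cdot\ch(\uE_0)$, with $e^t$ pulled back from the $[\ast/\G_m]$-factor.

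Next I would slant with $v^\vee\in H_\ast(X^{\rm an},\Q)$. Since $v^\vee$ involves only the $X^{\rm an}$-factor, the slant product commutes with pullback along $\Psi_0$ and with multiplication by $e^t$, so $H^\ast(\Psi_0)\big(\ch(\uE_0)/v^\vee\big)=e^t\cdot\big(\ch(\uE_0)/v^\vee\big)$. Using that $\ch_0([\uE_0]/v^\vee)=0$ in $H^0(\M_0,\Q)$ (the restriction of $[\uE_0]$ to $X\times\{pt\}$ has trivial class in $K^0_{\rm top}(X^{\rm an})$, cf. Proposition \ref{hom5prop12}), comparing degrees yields
$$H^\ast(\Psi_0)(\mu_{0,v,m})=\sum_{j=0}^{m-1}\frac{t^{j}}{j!}\otimes\mu_{0,v,m-j}$$
in $\Q[t]\otimes{\rm SSym}_\Q[\mu_{0,w,k}:w\in Q,k\geq 1]$, the case $v\in Q^-$ being identical since $t$ is of even degree. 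Because $H^\ast(\Psi_0)$ is a ring homomorphism and no $\mu_{0,w,0}$ occurs on the right, for any monomial in the $\mu_{0,w,k}$ of degree $\geq 2$ every nonzero term of its image under $H^\ast(\Psi_0)$ is again of degree $\geq 2$ in the $\mu$'s on the $\M_0$-factor.

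Finally I would dualise. The pairing (\ref{hom6eqn2}) identifies $H_\ast(\M_0,\Q)$ with the graded dual of $H^\ast(\M_0,\Q)$ in each degree, and $\langle t^i,t^j\rangle=\delta_{ij}$ on $H_\ast([\ast/\G_m],\Q)$; so by the K\"unneth formula $\langle H_\ast(\Psi_0)(t^i\boxtimes u_{0,v,1}),\eta\rangle=\langle t^i\boxtimes u_{0,v,1},H^\ast(\Psi_0)(\eta)\rangle$ for $\eta$ running over the monomial basis of $H^\ast(\M_0,\Q)$. By the previous paragraph this vanishes unless $\eta$ is linear in the $\mu$'s, hence unless $\eta=\mu_{0,v,k}$ for some $k$, in which case it equals $\tfrac{1}{(k-1)!}\langle t^i,t^{k-1}\rangle\langle u_{0,v,1},\mu_{0,v,1}\rangle=\tfrac{1}{i!}\,\delta_{k,i+1}$, using $\langle u_{0,v,1},\mu_{0,v,1}\rangle=1$ from (\ref{hom6eqn2}). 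On the other hand $u_{0,v,i+1}$ pairs to $\tfrac{1}{i!}\,\delta_{k,i+1}$ with $\mu_{0,v,k}$ and to $0$ with every other monomial. As the degreewise pairing is non-degenerate, $H_\ast(\Psi_0)(t^i\boxtimes u_{0,v,1})=u_{0,v,i+1}$. The main obstacle is this last step: one must rule out that $H_\ast(\Psi_0)(t^i\boxtimes u_{0,v,1})$ has components in products $u_{0,w,k}u_{0,w',k'}\cdots$ of two or more generators, which is exactly what the "degree $\geq 2$ stays degree $\geq 2$" observation (resting on $\mu_{0,v,0}=0$ on $\M_0$ and on $H^\ast(\Psi_0)$ being a ring map) secures; establishing the pullback isomorphism of the first step cleanly is the other point that needs care.
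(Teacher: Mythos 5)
Your proof is correct and takes essentially the same route as the paper's: the same pullback isomorphism $(\Psi_0 \times \Id_X)^\ast(\uE_0) \cong \pi^\ast_{[\ast/\G_m]}(E_1) \otimes \pi^\ast(\uE_0)$, the same resulting formula $H^\ast(\Psi_0)(\mu_{0,v,m}) = \sum_j \tfrac{1}{j!}\tau^j \boxtimes \mu_{0,v,m-j}$, and the same dualisation across the pairing (\ref{hom6eqn2}). The only differences are presentational: you handle general $i$ directly where the paper does $i=1$ and iterates, and you make explicit two points the paper leaves implicit, namely $\mu_{0,v,0}=0$ on $\M_0$ and the multiplicativity of $H^\ast(\Psi_0)$, which together rule out contributions from monomials of length at least two.
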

\begin{proof}
There is an isomorphism
    \begin{equation}
    \label{hom6eqn4}
    (\Psi_0 \times {\rm Id}_X)^\ast(\uE_0) \cong \pi^\ast_{[\ast/\mathbb{G}_m]}(E_1) \otimes \pi^\ast_{\M_0 \times X}(\uE_0),
    \end{equation}
where $E_1 \ra [\ast / \mathbb{G}_m]$ is the one-dimensional weight $1$ representation of $\mathbb{G}_m$.
Taking K-theory classes of (\ref{hom6eqn4}), slanting both sides by some $v^\vee \in Q^\vee$, and then taking $\ch^E_i$ gives
$$H^\ast(\Psi_0)(\mu_{0,v,i}) = \sum^i_{j=0} \frac{1}{j!} \tau^j \boxtimes \mu_{0,v,i-j},$$
where $\ch_j(E_1) = \frac{1}{j!} \tau^j$ under the isomorphism $H^\ast([\ast/\mathbb{G}_m]) \cong R[\tau]$. Therefore
\begin{align*}
\begin{split}
    [H_\ast(\Psi_0)(t \boxtimes u_{0,v,1})](\mu_{0,v_1,k_1} \dots \mu_{0,v_N,k_N}) & =  (t \boxtimes u_{0,v,1} )(\sum_{0 \leq j_i \leq k_i,i=1,...,N} \frac{1}{j_1!...j_N!} \\ \tau^{j_1+...+j_N}  \boxtimes (\mu_{0,v_1,k_1-j_1}...\mu_{0,v_N,k_N-j_N})) \\
    & = \begin{cases}
        1, & \text{if } N=1, v_N=v \text{ and } k_1=2 \\
        0, & {\rm otherwise}
    \end{cases}
\end{split}
\end{align*}
so that $H_\ast(\Psi_0)(t \boxtimes u_{0,v,1}) = u_{0,v,2}$. Iteration gives $H_\ast(\Psi_0)( t^k \boxtimes u_{0,v,1}) = u_{0,v,k+1}$.
\end{proof}

\begin{lem}
\label{hom6lem7}
Let $X$ be in class D. Then for all $\al \in K^0_{\rm sst}(X)$, $v \in Q$, and $\eta \in \hat{H}_\ast(\M_\al)$
	 \begin{equation*}
	 	\begin{split}
	 	Y(u_{0,v,1},z)\eta = (-1)^{{\rm deg}(v) \chi(\al,\al)} \big \{\sum_{i \geq 0} z^i \cdot u_{\al,v,i+1} \eta + \sum_{w \in Q, k \geq 0} k! \chi(v,w) z^{-k-1} (\eta \\ \cap \mu_{\al,w,k}) \big \}
		\end{split}
	 \end{equation*}
if $X$ is $2n$-Calabi--Yau and
    	\begin{equation*}
		\begin{split}
		Y(u_{0,v,1},z)\eta =(-1)^{{\rm deg}(v) \chi(\al,\al)} \big \{ \sum_{i \geq 0} z^i \cdot u_{\al,v,i+1} \eta + \sum_{w \in Q, k \geq 0} k! \chi_{\rm sym}(v,w) z^{-k-1}  \\ ( \eta  \cap \mu_{\al,w,k}) \big \}
		\end{split}
	\end{equation*}
  otherwise.  
\end{lem}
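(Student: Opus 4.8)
The plan is to substitute $v=u_{0,v,1}\in\hat H_\ast(\M_0)$ and $w=\eta\in\hat H_\ast(\M_\al)$ directly into the explicit formula (\ref{hom4eqn2}) for Joyce's state-to-field correspondence, with $\Th^\bu=(\cExt)^\vee$ in the Calabi--Yau case and $\Th^\bu=(\cExt)^\vee\oplus\si^\ast(\cExt)$ otherwise, and then simplify term by term using Proposition \ref{hom6prop3} together with Lemmas \ref{hom6lem4}, \ref{hom6lem5}, and \ref{hom6lem6}. Since the first argument lies over the zero class $0\in K^0_{\rm sst}(X)$ and $\chi$ (resp.\ $\chi_{\rm sym}$) is bi-additive, the prefactor $z^{\chi(0,\al)}$ in (\ref{hom4eqn1}) is $z^0=1$ and $\ep_{0,\al}=1$, while the remaining sign in (\ref{hom4eqn1})--(\ref{hom4eqn2}) depends only on the parity of the state $u_{0,v,1}$ and so reduces to $(-1)^{{\rm deg}(v)\chi(\al,\al)}$, where ${\rm deg}(v)$ is $0$ or $1$ according as $v\in Q^+$ or $v\in Q^-$. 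Thus it remains to evaluate $\Phi_{0,\al,\ast}(\mathcal D(z)\boxtimes{\rm id})\big[u_{0,v,1}\boxtimes\eta\cap{\rm exp}\big(\sum_{i\geq1}(-1)^{i-1}(i-1)!\,z^{-i}\,\ch_i(\Th^\bu_{0,\al})\big)\big]$.

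Next I would expand the exponential using Proposition \ref{hom6prop3}, which gives $\ch_i(\Th^\bu_{0,\al})=\sum_{j+k=i}\sum_{v',w\in Q}(-1)^k\chi(v',w)\,\mu_{0,v',j}\boxtimes\mu_{\al,w,k}$ (with $\chi_{\rm sym}$ in the non-Calabi--Yau case). The key reduction is that, after capping with $u_{0,v,1}\boxtimes\eta$, only the constant term $1$ and the linear term of this exponential contribute. Indeed, on the $\M_0$-factor one must cap $u_{0,v,1}$ against a product of classes $\mu_{0,v',j}$, and by Lemma \ref{hom6lem5} this is nonzero only when the product is a single $\mu_{0,v',j}$ and moreover $(v',j)=(v,1)$, in which case it yields $1\in H_0(\M_0)$; any product of two or more such classes, or a single one with $j\neq1$ or $v'\neq v$, caps to zero. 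Here one uses that the ``$j=0$'' summands never occur: $\mu_{0,v',0}$ denotes the $\ch_0$-component of a Künneth slant of $[\uE_0]$, and this vanishes because $\uE_0$ has rank $0$ (its class in $K^0_{\rm sst}(X)$ being $0$). Hence exactly two families of terms survive the cap product.

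From the constant term the cap is just $u_{0,v,1}\boxtimes\eta$; applying $\mathcal D(z)\boxtimes{\rm id}$ and using that $\mathcal D(z)$ is induced by the $[\ast/\G_m]$-action $\Psi_0$ gives $\mathcal D(z)(u_{0,v,1})=\sum_{i\geq0}z^i u_{0,v,i+1}$ by Lemma \ref{hom6lem6}, and pushing forward along $\Phi_{0,\al}$ and invoking Lemma \ref{hom6lem4} (which identifies $\Phi_{0,\al,\ast}$ with multiplication) produces $\sum_{i\geq0}z^i\,u_{\al,v,i+1}\eta$. From the linear term, capping on the $\M_0$-factor forces $j=1$, $v'=v$ and leaves the scalar $(-1)^k\chi(v,w)$ together with $1\boxtimes(\eta\cap\mu_{\al,w,k})$, where $k=i-1$; the coefficient $(-1)^{i-1}(i-1)!$ then cancels the $(-1)^k$ to leave $k!$, the monomial $z^{-i}$ becomes $z^{-k-1}$, and since $\mathcal D(z)$ fixes $1\in H_0(\M_0)$ and $\Phi_{0,\al,\ast}(1\boxtimes\zeta)=\zeta$ by Lemma \ref{hom6lem4}, this family contributes $\sum_{w\in Q,\,k\geq0}k!\,\chi(v,w)\,z^{-k-1}(\eta\cap\mu_{\al,w,k})$. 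Adding the two families and restoring the sign $(-1)^{{\rm deg}(v)\chi(\al,\al)}$ yields the stated formula; the non-Calabi--Yau case is identical with $\chi$ replaced by $\chi_{\rm sym}$ throughout. The main technical point to get right is the combinatorial bookkeeping of the cap product --- in particular the vanishing of the ``$j=0$'' and multi-$\mu$ contributions together with the sign cancellation between $(-1)^{i-1}(i-1)!$ and the $(-1)^k$ coming from $\ch_i(\Th^\bu)$ --- since everything else is a routine substitution into formulas already established in this section.
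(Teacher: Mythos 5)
Your proposal is correct and takes essentially the same route as the paper's proof: substitute $u_{0,v,1}\boxtimes\eta$ into (\ref{hom4eqn2}), expand $\ch_i(\Th^\bu_{0,\al})$ via Proposition \ref{hom6prop3}, use Lemma \ref{hom6lem5} to kill all but the constant and $j=1$ linear terms of the exponential, and evaluate the two surviving families with Lemmas \ref{hom6lem6} and \ref{hom6lem4}. Your explicit remark that the $j=0$ contributions vanish because $\mu_{0,v',0}=0$ is a detail the paper's proof passes over silently, but it only tightens, and does not alter, the argument.
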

\begin{proof}
Assume that $X$ is $2n$-Calabi--Yau. When $X$ is not $2n$-Calabi--Yau, the proof is essentially identical.
By definition
    \begin{equation*}
    \begin{split}
        Y(u_{0,v,1},z)\eta  = \ep_{0,\al}(-1)^{{\rm deg}(v) \chi(\al,\al)} z^{\chi(0,\al)} \hat{H}_\ast(\Phi_{0,\al}) \circ \hat{H}_\ast(\Psi_0 \times {\rm Id}_{\M_\al}) \big \{ (\sum_{i \geq 0} t^i z^i) \\ \boxtimes (u_{0,v,1} \boxtimes \eta)
     \cap {\rm exp} (\sum_{v,w \in Q} \sum_{j,k \geq 0, j+k \geq 1} (-1)^{j-1} (j+k-1)! z^{-j-k} \chi(v,w) \mu_{0,v,j} \\ \boxtimes \mu_{\al,w,k} \big\}
    \end{split}
    \end{equation*}
which gives    
\begin{equation}
\label{hom6eqn5}
    \begin{split}
        Y(u_{0,v,1}z)\eta = (-1)^{{\rm deg}(v) \chi(\al,\al)} \hat{H}_\ast(\Phi_{0,\al}) \circ \hat{H}_\ast(\Psi_0 \times {\rm Id}_{\M_\al}) \big \{(\sum_{i \geq 0} z^it^i) \boxtimes  (u_{0,v,1} \boxtimes \\ \eta)   \cap [1 + \sum_{v,w\in Q, k \geq 0}  k! \chi(v,w) z^{-k-1}  \mu_{0,v,1} \boxtimes \mu_{\al,w,k} \big \}
    \end{split}
\end{equation}
because, by Lemma \ref{hom6lem5}, $u_{0,v,1} \cap \mu_{0,v,j} = 0$ unless $j=1$, in which case $u_{0,v,1} \cap \mu_{0,v,1} = 1$ and $u_{0,v,1} \cap \mu_{0,v,1}^k = 0$ for all $k > 1$. So, the right hand side of (\ref{hom6eqn5}) equals
\begin{equation}
\label{hom6eqn6}
\begin{split}
(-1)^{{\rm deg}(v) \chi(\al,\al)} \hat{H}_\ast(\Phi_{0,\al}) \big \{ \hat{H}_\ast(\Psi_0)((\sum_{i \geq 0} z^i t^i) \boxtimes u_{0,v,1}) \boxtimes \eta + \hat{H}_\ast(\Psi_0) (\sum_{i \geq 0} z^it^i \boxtimes \\ 1_0) \boxtimes (\sum_{w \in Q} k! \chi(v,w) z^{-k-1}  \cdot (\eta \cap \mu_{\al,w,k})) \big \}
\end{split}     
\end{equation}
By Lemma \ref{hom6lem6},
$$\hat{H}_\ast(\Psi_0)(\sum_{i \geq 0} z^it^i \boxtimes \mu_{0,v,1}) = \sum_{i \geq 0} z^i u_{0,v,i+1} \text{ and } \hat{H}_\ast(\Psi_0)(\sum_{i \geq 0} z^i t^i \boxtimes 1_0) = 1_0.$$
Substituting this into (\ref{hom6eqn6}) completes the proof.
\end{proof}

\begin{thm}
\label{hom6thm8}
Let $X$ be in class D. If $X$ is $2n$-Calabi--Yau then the isomorphism of graded Hopf algebras
    \begin{equation}
    \label{hom6eqn7}
        \hat{H}_\ast(\M,\Q) \cong V_{K_{\rm top}^0(X^{\rm an}) \oplus K_{\rm top}^1(X^{\rm an})},
    \end{equation}
is moreover an isomorphism of graded vertex algebras where the right hand side of (\ref{hom6eqn7}) denotes the graded generalized super-lattice vertex $R$-algebra on the generalized super-lattice $(K_{\rm top}^0(X^{\rm an}) \oplus K_{\rm top}^1(X^{\rm an}), \chi)$ and the map $K_{\rm sst}^0(X) \hookrightarrow K_{\rm top}^0(X^{\rm an})$. Up to isomorphism, this vertex algebra is independent of the representative of the group cohomology class $[\ep] \in H^2(K^0_{\rm sst}(X),\Z/2\Z)$ that a given choice of $\{\ep_{\al,\be}\}_{\al,\be \in K^0_{\rm sst}(X)}$ defines. If $X$ is not $2n$-Calabi--Yau then (\ref{hom6eqn7}) is an isomorphism of graded vertex $R$-algebras where the right hand side of (\ref{hom6eqn7}) is the graded generalized super-lattice vertex $R$-algebra associated to the generalized super-lattice $(K_{\rm top}^0(X^{\rm an})\oplus K^1_{\rm top}(X^{\rm an}), \chi_{\rm sym})$ and the map $K_{\rm sst}^0(X) \hookrightarrow K_{\rm top}^0(X^{\rm an})$.
\end{thm}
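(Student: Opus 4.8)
The plan is to deduce the theorem from the reconstruction statement Proposition \ref{hom3prop6}. By Theorem \ref{hom4thm4} and the data $\Th^\bu$, $\chi$ (resp.\ $\chi_{\rm sym}$) and $\{\ep_{\al,\be}\}$ fixed just before Lemma \ref{hom6lem2}, the shifted homology $\hat H_\ast(\M,\Q)$ already carries a graded vertex $R$-algebra structure. Combining Theorem \ref{hom5thm4}, the hypothesis $X\in$ class D (Definition \ref{hom5dfn5}), and the description of $H^\ast(\M_\al,\Q)$ following Theorem \ref{hom5thm13}, one has the graded Hopf algebra isomorphism (\ref{hom6eqn7}); concretely the space of states is
$$\hat H_\ast(\M,\Q)\;\cong\;R[B^+]\otimes{\rm SSym}_R\bigl[u_{0,v,i}:v\in Q,\ i\geq 1\bigr],$$
which is of the form required by Proposition \ref{hom3prop6}, with $B^+=K^0_{\rm sst}(X)$, $\iota:K^0_{\rm sst}(X)\hookra K^0_{\rm top}(X^{\rm an})$ the class-D monomorphism, $Q=Q^+\cup Q^-$ the fixed basis of $K^0_{\rm top}(X^{\rm an})_\Q\oplus K^1_{\rm top}(X^{\rm an})_\Q$, and $u_{\al,v,i}$ the class dual, under the pairing (\ref{hom6eqn2}), to $\mu_{\al,v,i}=\ch_i(\uE_\al)/v^\vee$. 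It therefore suffices to check condition (\ref{hom3eqn8}) of Proposition \ref{hom3prop6}, namely $(u_{0,v,1})_n=v(z)_n$ for all $v\in Q$ and $n\in\Z$, taking $\chi^+=\chi$ in the Calabi--Yau case (where $\chi$ is symmetric by Serre duality) and $\chi^+=\chi_{\rm sym}$ otherwise; and, in the non-Calabi--Yau case, that the signs $\ep_{\al,\be}=(-1)^{\chi(\al,\be)}$ solve (\ref{hom3eqn3})--(\ref{hom3eqn5}) for $\chi_{\rm sym}$, which follows from Serre duality on $X$ together with $2\mid\chi_{\rm sym}(\al,\al)=2\chi(\al,\al)$ (in the Calabi--Yau case the $\ep_{\al,\be}$ are such a solution by hypothesis).

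To verify the mode identity I would take the explicit formula for $Y(u_{0,v,1},z)\eta$, $\eta\in\hat H_\ast(\M_\al)$, from Lemma \ref{hom6lem7} and compare the coefficient of $z^{-n-1}$ with $v(z)_n$, where $v(z)=\sum_n\rho^+(v\otimes t^n)\otimes{\rm Id}\,z^{-n-1}$ for $v\in Q^+$ and $v(z)=\sum_n{\rm Id}\otimes\rho^-(v\otimes t^{n+\frac12})\,z^{-n-1}$ for $v\in Q^-$. For $n\leq -1$ the term $\sum_{i\geq 0}z^i u_{\al,v,i+1}(\,\cdot\,)$ of Lemma \ref{hom6lem7} must be multiplication by the creation generator $v\otimes t^{-i-1}$; under (\ref{hom3eqn7}) this is literally multiplication by $u_{v,i+1}$, so the two agree up to the overall sign discussed below. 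For $n=0$ one must identify $\sum_{w\in Q}\chi(v,w)(\eta\cap\mu_{\al,w,0})$ with the scalar $\chi^+(\iota(\al),v)$: this holds because $\mu_{\al,w,0}$ is precisely the $w$-coordinate of $\iota(\al)$ in the basis $Q$ --- seen by restricting $[\uE_\al]$ to a fibre $X^{\rm an}\times\{{\rm pt}\}$ --- so $\sum_w\chi(v,w)\mu_{\al,w,0}=\chi(v,\iota(\al))=\chi^+(\iota(\al),v)$ by symmetry of $\chi^+$, while for $v\in Q^-$ both sides vanish ($\mu_{\al,w,0}=0$ unless $w\in Q^+$, $\chi$ kills mixed parity, and $\rho^-(v\otimes t^{\frac12})=0$).

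For $n\geq 1$, Lemma \ref{hom6lem5} shows that capping with $\mu_{\al,w,n}$ is a rescaling of the partial derivative $\partial_{u_{\al,w,n}}$ --- equivalently, the transpose of multiplication by $\mu_{\al,w,n}$ under (\ref{hom6eqn2}) --- so the term $\sum_{w,n\geq 1}n!\,\chi(v,w)z^{-n-1}(\eta\cap\mu_{\al,w,n})$ of Lemma \ref{hom6lem7} realises the annihilation operator $\rho^+(v\otimes t^n)$ (resp.\ $\rho^-(v\otimes t^{n+\frac12})$) once the factors $n!$, $1/(n-1)!$ and the Heisenberg normalisation of $\rho_1,\rho^-$ are reconciled. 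The prefactor $(-1)^{\deg(v)\chi(\al,\al)}$ appearing throughout Lemma \ref{hom6lem7} is exactly the Koszul sign forced by the graded tensor product of Definition \ref{hom3dfn2} and the degree convention of (\ref{hom3eqn6}), under which $e^\al$ carries the parity of $-\chi^+(\iota(\al),\iota(\al))$; it is therefore already present on the super-lattice side and requires no correction. With (\ref{hom3eqn8}) verified, Proposition \ref{hom3prop6} yields that (\ref{hom6eqn7}) is an isomorphism of graded vertex $R$-algebras onto the generalized super-lattice vertex algebra of $(K^0_{\rm top}(X^{\rm an})\oplus K^1_{\rm top}(X^{\rm an}),\chi)$ --- or of $(K^0_{\rm top}(X^{\rm an})\oplus K^1_{\rm top}(X^{\rm an}),\chi_{\rm sym})$ in the non-Calabi--Yau case --- together with $\iota$. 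Finally, if $\ep'_{\al,\be}$ and $\ep_{\al,\be}$ define the same class in $H^2(K^0_{\rm sst}(X),\Z/2\Z)$, they differ by the coboundary of a $1$-cochain $\lambda$, and the rescaling $e^\al\mapsto(-1)^{\lambda(\al)}e^\al$ (identity on the ${\rm SSym}$ factor) intertwines the two vertex algebra structures, so the isomorphism class is independent of the representative.

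The \emph{main obstacle} is the sign-and-normalisation bookkeeping of the middle two paragraphs: simultaneously tracking the three factorial conventions (the $(-1)^{i-1}(i-1)!$ of (\ref{hom4eqn2}), the $m!/((i-1)!)^m$ of (\ref{hom6eqn2}), and the normalisation built into $\rho_1,\rho^-$), the Koszul signs for the odd generators, the signs introduced by the involution $(-)^\vee$ and by Proposition \ref{hom6prop3}, and establishing the identity $\sum_w\chi(v,w)\mu_{\al,w,0}=\chi^+(\iota(\al),v)$. The conceptual content --- that Joyce's fields generate exactly the Heisenberg plus lattice-vertex-operator fields --- is already packaged into Lemma \ref{hom6lem7} and Proposition \ref{hom3prop6}; what remains is the verification that the two presentations coincide on the nose.
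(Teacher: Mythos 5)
Your proposal follows essentially the same route as the paper: reduce via Proposition \ref{hom3prop6} to verifying the mode identity $(u_{0,v,1})_n=v(z)_n$ for the generating fields, then compare coefficients of $z^{-n-1}$ in the explicit formula of Lemma \ref{hom6lem7} case by case for $n<0$, $n=0$, $n>0$. Your treatment is if anything more careful than the paper's (notably the identification $\sum_w\chi(v,w)\mu_{\al,w,0}=\chi^+(\iota(\al),v)$ in the $n=0$ case and the explicit coboundary argument for independence of $[\ep]$), but the underlying argument is the same.
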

\begin{proof}
By Proposition 3.7 it suffices to study fields which are of the form $Y(u_{0,v,1},z)$ for $v \in Q$. We will assume that $v \in Q^+$. The $v \in Q^-$ case is nearly identical.

First, suppose $n <0$. For $\eta \in \hat{H}_\ast(\M_\al)$, by Lemma \ref{hom6lem7},
    $$Y(u_{0,v,1},z)_n \eta = (-1)^{{\rm deg}(v) \chi(\al,\al)} u_{\al,v,i} \cdot \eta,$$
which corresponds to $v_n (\eta)$.  For $n=0$,   
    \begin{align*}
        Y(u_{0,v,1},z)_0 \eta & =  (-1)^{{\rm deg}(v) \chi(\al,\al)} \eta \cap \mu_{\al,v,0} 
        = 0 
        = (-1)^{{\rm deg}(v) \chi(\al,\al)}\chi(\al,0) \cdot \eta.
    \end{align*}
For $n > 0$ and $w \in Q$
    \begin{align*}
        Y(u_{0,v,1},z)_n u_{\al,w,i} & = (-1)^{{\rm deg}(v) \chi(\al,\al)} n! \chi(v,w) (u_{\al,v,i} \cap \mu_{\al,w,n}) \\
        & = \begin{cases}
           (-1)^{{\rm deg}(v) \chi(\al,\al)}  n \chi(v,w), & i = n, \\
            0, & \text{otherwise.}
        \end{cases}
    \end{align*}
This identifies $ Y(u_{0,v,1},z)_n$ with $v_n(-)$. 
\end{proof}

\addcontentsline{toc}{section}{References}

\end{document}